\numberwithin{equation}{section}
\theoremstyle{plain}
\newtheorem{lemma}{Lemma}[section]
\newtheorem{theorem}[lemma]{Theorem}
\newtheorem{corollary}[lemma]{Corollary}
\newtheorem{prop}[lemma]{Proposition}
\theoremstyle{definition}
\newtheorem{definition}{Definition}[section]
\newtheorem{remark}{Remark}[section]
      \newcommand{\N}{{\mathbb N}}
      \newcommand{\R}{{\mathbb R}}
     \newcommand{\Ne}{{\mathcal N}}
\newcommand{\A}{{\mathcal A}}    
\newcommand{\B}{{\mathcal B}}
\newcommand{\pl }{\,}
\newcommand{\lel }{\, =\, }
\newcommand{\ten}{\otimes}
\newcommand{\uno}{\mathbbm{1}}
\begin{document}

\baselineskip=17pt

\title[Channel capacities via $p$-summing norms]{Channel capacities via $p$-summing norms}

\author{Marius Junge and Carlos Palazuelos}

\thanks{The first author is partially supported by NSF DMS-1201886. The second author is partially supported by the EU grant QUEVADIS, Spanish projets QUITEMAD, MTM2011-26912 and  the ``Juan de la Cierva'' program. Both authors are partially supported by MINECO: ICMAT Severo Ochoa project SEV-2011-0087.}

\maketitle

\begin{abstract}
In this paper we show how \emph{the metric theory of tensor products} developed by Grothendieck perfectly fits in the study of
channel capacities, a central topic in \emph{Shannon's information theory}. Furthermore, in the last years Shannon's theory has been
generalized to the quantum setting to let the \emph{quantum information theory} step in. In this paper we consider the classical
capacity of quantum channels with restricted assisted entanglement. In particular these capacities include the classical capacity and the
unlimited entanglement-assisted classical capacity of a quantum channel. To deal with the quantum case we will use the noncommutative version of $p$-summing maps. More precisely, we prove that the (product state) classical capacity of a quantum channel with restricted assisted entanglement can be expressed as the derivative of a completely $p$-summing norm.
\end{abstract}
\section{Introduction}\label{Section Introduction}
In the late 1940s Shannon single-handedly established the entire mathematical field of information theory in his famous paper \emph{A Mathematical Theory of Communication} (\cite{Shannon}). Some ground-breaking ideas like the quantization of the information content of a message by the Shannon entropy, the concept of channel capacity or the schematic way to understand a communication system were presented in \cite{Shannon}, laying down the pillars of the future research in the field. Being naturally modeled by a stochastic action, a \emph{noisy channel} is defined as a (point-wise) positive linear map $\Ne:\R^n_A\rightarrow \R^n_B$ between the sender (Alice) and the receiver (Bob) which preserves probability distributions. In terms of notation, we will denote a channel by $\Ne: \ell_1^n\rightarrow \ell_1^n$\footnote{Note that a channel acting on $n$-bit strings should be denoted by $\Ne: \ell_1^{2^n}\rightarrow \ell_1^{2^n}$}. Shannon defined the \emph{capacity of a channel} as an asymptotic ratio\footnote{We will give a formal definition below.}: $$\frac{\text{number of transmitted bits with an $\epsilon \rightarrow 0$ error}}{\text{number of required uses of the channel in parallel}}.$$
One of the most important results presented in \cite{Shannon} is the so called \emph{noisy channel coding theorem}, which states that for every noisy channel $\Ne:\ell_1^n \rightarrow\ell_1^n$ its capacity is given by
\begin{align}\label{classical-classical- Equation}
C_c(\Ne)=\max_{P=(p(x))_x}H(X:Y),
\end{align}where $H(X:Y)$ denotes the mutual information\footnote{$H(X:Y)=H(X)+H(Y)-H(X,Y)$, where $H$ represents the Shannon entropy.} of an input distributions $P=(p(x))_x$ for $X$ and the corresponding induced distribution at the output of the channel $(\Ne(P))_y$.
Although our main Theorem \ref{mainI} will be stated in a much more general context, it already uncovers a beautiful relation between Shannon information theory and $p$-summing maps when it is applied to classical channels. Indeed, it states that for every channel $\Ne:\ell_1^n\rightarrow \ell_1^n$ we have
\begin{align}\label{mainI-classical channels}
C_c(\Ne)=\frac{d}{dp}\big[\pi_q(\Ne^*)\big]|_{p=1},
\end{align}where $\pi_q(\Ne^*)$ denotes the $q$-summing norm of the map $\Ne^*:\ell_\infty^n\rightarrow \ell_\infty^n$ and $\frac{1}{p}+\frac{1}{q}=1$.

In the very last years, Shannon's theory has been generalized to the quantum setting. In this new context, one replaces probability distributions by density operators: Semidefinite positive operators $\rho$ of trace one; so the natural space to work with is $S_1^n$\footnote{$S_1^{2^n}$ if we are dealing with $n$ quantum bits or \emph{qubits}.} (the space of trace class operators). Then, we define a \emph{quantum channel} as a completely positive\footnote{The requirement of completely positivity is explained by the fact that our map must be a channel when we consider our system as a physical subsystem of an amplified one (with an environment) and we consider the map $1_{Env}\otimes \Ne$.} and trace preserving linear map on $M_n$. Analogously to the classical case, we will denote a quantum channel by $\Ne:S_1^n\rightarrow S_1^n$.

Quantum information becomes particularly rich when we deal with bipartite states thanks to \emph{quantum entanglement}. Entanglement is a fundamental resource in quantum information and quantum computation and it is not surprising that it plays a very important role in the study of channels. In particular, it can be seen that the capacity of a quantum channel can be increased if the sender and the receiver are allowed to use a shared entangled state in their protocols. In this work we will study the capacity of a quantum channel to transmit classical information; that is, the \emph{classical capacity}. However, we can consider different classical capacities depending on the amount of shared entanglement allowed to Alice and Bob. Given a quantum channel $\Ne:S_1^n\rightarrow S_1^n$ we will call \emph{$d$-restricted classical capacity of $\Ne$} to the classical capacity of the channel when Alice and Bob are allowed to use a $d$-dimensional entangled state per channel use in the protocol. In fact, our capacity is very closely related to the one studied in \cite{Shor}, where the author imposed the restriction on the entropy of entanglement per channel use. We will explain the connections between the two definitions in Section \ref{Phy Int}. Therefore, we define a family of capacities such that for the case $d=1$ we recover the so called \emph{classical capacity of $\Ne$ (without entanglement)}, $C_c(\Ne)$, and taking the supremum on $d\geq 1$ we obtain the so called \emph{(unlimited) entanglement-assisted classical capacity of $\Ne$}, $C_E(\Ne)$. This family of capacities can be defined within the following common ratio-expression:
\begin{align*}
\lim_{\epsilon\rightarrow 0}\limsup_{k\rightarrow \infty}\Big\{\frac{m}{k}:\exists_\mathcal A,\exists_\mathcal B \text{   such that   } \|id_{\ell_1^{2^m}}-\mathcal B\circ \Ne^{\otimes_k}\circ \mathcal A\|< \epsilon\Big\}.
\end{align*}
Here $\mathcal A$ and $\mathcal B$ represent Alice's encoder and Bob's decoder channels respectively (which will depend on the resources they can use in their protocol) and $\Ne^{\otimes_k}$ denotes the $k$ times uses of the channel in parallel. The reader will find a more extended explanation about the different classical capacities of a quantum channel in Section \ref{Phy Int}.

In order to compute the classical capacities of a quantum channel $\Ne$ one could expect to have an analogous result to (\ref{classical-classical- Equation}). However, the situation is more difficult in the case of quantum channels. A first approach to the problem consists of restricting the protocols that Alice and Bob can perform. We will talk about the \emph{product state version} of a capacity when we impose that Alice (the sender) is not allowed to distribute one entangled state among more than one channel use. More specifically, for any quantum channel $\Ne:S_1^n\rightarrow S_1^n$ and any $1\leq d\leq n$ let us denote by $C_{prod}^d(\Ne)$ the classical capacity of $\Ne$ with assisted entanglement when\footnote{We will explain the quantity $C_{prod}^d(\Ne)$ in more detail in Section \ref{Phy Int}.}
\begin{enumerate}
\item[a)] Alice and Bob are restricted to protocols in which they start sharing a (pure) $d$-dimensional bipartite state per channel use.
\item[b)] The sender inputs one and only one of (their part of) these entangled states in each channel.
\end{enumerate}
Following the same ideas as in \cite{BSST} and \cite{Shor} one can see 
\begin{align}\label{d-restricted capacity}
C_{prod}^d(\Ne):=\sup\Big\{S\Big(\sum_{i=1}^N \lambda_i (\Ne\circ \phi_i)((tr_{M_d}\otimes id_{M_d})(\eta_i))\Big)
\\ \nonumber+\sum_{i=1}^N \lambda_i\Big[S\Big((id_{M_d}\otimes tr_{M_d})(\eta_i)\Big)-S\Big(\big(id_{M_d}\otimes (\Ne\circ \phi_i)\big)(\eta_i)\Big)\Big] \Big\}.
\end{align}Here, $S(\rho):= -tr(\rho \log_2 \rho)$ denotes the \emph{von Neumann entropy of a quantum state} $\rho$ and the supremum runs over all $N\in \N$, all probability distributions $(\lambda_i)_{i=1}^N$, and all families $(\phi_i)_{i=1}^N$, $(\eta_i)_{i=1}^N$, where $\phi_i:S_1^d\rightarrow S_1^n$ is a quantum channels and $\eta_i\in S_1^d\otimes S_1^d$ is a pure state for every $i=1,\cdots, N$.

Equation (\ref{d-restricted capacity})  reduces to Equation (\ref{classical-classical- Equation}) when $\Ne$ is a classical channel. On the other hand, it can bee seen that in the case $d=1$ we recover the Holevo-Schumacher-Westmoreland Theorem, which describes the product state classical capacity (or Holevo capacity) of a quantum channel (\cite{Hol}, \cite{SW}). Moreover, in the case $d=n$ we recover the Bennett-Shor-Smolin-Thapliyal Theorem, which gives a formula to compute the entanglement-assisted classical capacity of a quantum channel (\cite{BSST}). The reader will find a brief introduction to these capacities in Section \ref{Phy Int}. In order to obtain the general capacities (rather than the product state version) one has to consider the corresponding regularization. It is not difficult to see that in this case the regularization is given by
\begin{align*}
C^d(\Ne)=\sup_k\frac{C_{prod}^{d^k}(\otimes^k \Ne)}{k}.
\end{align*}
Recently Hastings solved a long-standing open question in quantum information theory by showing that $C^1(\Ne)\neq C_{prod}^1(\Ne)$ for certain quantum channels (\cite{Has}). Hastings' result shows that we do need to consider the regularization of $C_{prod}^1(\Ne)$ to compute the classical capacity and it is not enough to consider the much easier formula given in (\cite{Hol}, \cite{SW}). On the other hand, one can check the the formulae (\ref{classical-classical- Equation}) and the one given in \cite{BSST} to express the product state version of the unlimited entanglement-assisted classical capacity of a quantum channel are additive on channels: $C(\Ne_1\otimes \Ne_2)=C(\Ne_1)+C(\Ne_2)$. This is a crucial fact since in these cases, no regularization of the product state version of the corresponding capacities is required and, thus, those formulae describe the general capacities.

Equation (\ref{d-restricted capacity}) expresses mathematically the capacity of a channel, which was previously defined by means of concepts like \emph{protocols} or \emph{many uses of the channel in parallel}. The main result presented in this work shows a direct connection between the quantity $C_{prod}^d(\Ne)$ and the theory of \emph{absolutely $p$-summing maps}. Introduced first by Grothendieck in \cite{Grothendieck}, the theory of $p$-summing maps was exhaustively studied by Pietsch (\cite{Pie}) and Lindenstrauss and Pelczynski (\cite{LiPe}). In fact, it was in this last seminal work where the authors showed the extreme utility of $p$-summing maps in the study of many different problems in Banach space theory. We recommend the references \cite{DeFl} and \cite{DJT} for a complete study on the topic. The generalization of the theory of absolutely $p$-summing maps to the noncommutative setting was developed by Pisier by means of the so called \emph{completely $p$-summing maps} (\cite{Pisierbook2}). Even generalizing the definition of $p$-summing maps to the noncommutative setting is not obvious since it requires the concept of noncommutative vector valued $L_p$-spaces. However, absolutely $p$-summing maps admit another natural generalization to the so called \emph{$(cb,p)$-summing maps}, introduced by the first author (\cite{Junge-Hab}), which can be seen as a generalization to an intermediate setting between the Banach space case and the completely $p$-summing maps. In a complete general way we will consider here the $\ell_p(S_p^d)$-maps which include, in particular, the two previous definitions. Thanks to the factorization theorem proved by Pisier (\cite[remark 5.11]{Pisierbook2}) we have the following easy definition for maps defined on $M_n$:
\begin{align*}
\pi_{q,d}(T:M_n\rightarrow M_n)=\inf\Big\{\|id_{M_d}\otimes \tilde{T}\|_{M_d(S_p^n)\rightarrow M_d(M_n)}: T=\tilde{T}\circ M_{a,b}\Big\},
\end{align*}
where the infimum runs over al factorizations of $T$ with $a,b\in M_n$ verifying $\|a\|_{S_{2p}^n}=\|b\|_{S_{2p}^n}=1$, $M_{a,b}:M_n\rightarrow S_p^n$ being the linear map defined by $M_{a,b}(x)=axb$ for every $x\in M_n$ and $\tilde{T}:S_p^n\rightarrow M_n$ being a linear map.

Our main result states as follows.
\vspace{-0.15 cm}
\begin{theorem}\label{mainI}
Given a quantum channel $\Ne:S_1^n\rightarrow S_1^n$,
\begin{equation}\label{Main result restricted capacity}
C_{prod}^d(\Ne)= \frac{d}{dp}\big[\pi_{q,d}(\Ne^*)\big]|_{p=1},
\end{equation}where $\frac{1}{p}+\frac{1}{q}=1$. Here, $\pi_{q,d}(\Ne^*)$ denotes the $\ell_q(S_q^d)$-summing norm of $\Ne^*:M_n\rightarrow M_n$.
\end{theorem}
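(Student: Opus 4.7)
The plan is to unfold Pisier's factorization of $\pi_{q,d}(\Ne^*)$ recorded in the excerpt, dualize it into a supremum over ensembles of channels and bipartite pure states, and then differentiate at $p=1$ using the fact that Schatten $p$-norms recover von Neumann entropies. This is the noncommutative analogue of the elementary identity $\frac{d}{dp}\|p\|_p|_{p=1}=\sum_i p_i\log p_i$ for a probability vector $p$, which is precisely what makes (\ref{mainI-classical channels}) work in the commutative case.

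First I would analyze the factorization $\Ne^*=\tilde{T}\circ M_{a,b}$. Since $\Ne^*$ is completely positive and unital (because $\Ne$ is a channel), standard symmetrization lets us restrict to $a=b=\rho^{1/(2p)}$ for some density $\rho\in S_1^n$. Writing $\rho=\sum_i\lambda_i\rho_i$ as a convex decomposition and lifting each $\rho_i$ to a pure state $\eta_i\in S_1^d\otimes S_1^d$ by purification injects the entangled resource into the picture, while the channels $\phi_i:S_1^d\to S_1^n$ appear as the restriction of $\tilde{T}$ to the corresponding block. I would then rewrite the operator-space norm $\|id_{M_d}\otimes\tilde{T}:M_d(S_p^n)\to M_d(M_n)\|$ via trace duality for Pisier's noncommutative vector-valued $L_p$-spaces (\cite{Pisierbook2}), turning it into a supremum of Schatten $p$-norms of operators of the form $(id_{M_d}\otimes(\Ne\circ\phi_i))(\eta_i)$, weighted by $(\lambda_i)$.

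Finally I would differentiate at $p=1$. Using $\frac{d}{dp}(\tr\sigma^p)^{1/p}|_{p=1}=-S(\sigma)$ for any density $\sigma$, the three normalization conditions (the constraints on $\|a\|_{S_{2p}^n}$, on $\|b\|_{S_{2p}^n}$, and on the output norm in $M_d(M_n)$) each contribute one entropy term, and after collecting them the derivative should rearrange into
\begin{align*}
&S\Big(\sum_i\lambda_i(\Ne\circ\phi_i)((tr_{M_d}\otimes id_{M_d})(\eta_i))\Big)\\
&\quad+\sum_i\lambda_i\Big[S((id_{M_d}\otimes tr_{M_d})(\eta_i))-S((id_{M_d}\otimes(\Ne\circ\phi_i))(\eta_i))\Big],
\end{align*}
matching the supremum defining $C_{prod}^d(\Ne)$.

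The main obstacle will be justifying the commutation of $\frac{d}{dp}$ with both the infimum in Pisier's factorization and the supremum hidden in the operator-space norm. Since $p\mapsto\pi_{q,d}(\Ne^*)$ is convex (coming from a family of norms interpolating in $1/p$) and equals $1$ at $p=1$ by unitality of $\Ne^*$, one-sided derivatives exist; but proving that the optimizing triples $(\lambda_i,\phi_i,\eta_i)$ remain in a compact set as $p\to 1^+$ and that the derivative of the optimum equals the optimum of the derivative will require a careful continuity and compactness argument, and is where most of the technical work should lie.
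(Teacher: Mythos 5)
Your high-level intuition (Pietsch-type factorization, trace duality, and differentiation of Schatten norms into von Neumann entropies) does match the paper's strategy in broad strokes, but there is a genuine gap at the heart of the proposal: the mechanism that is supposed to produce the $d$-dimensional ingredients of the capacity formula --- the channels $\phi_i:S_1^d\to S_1^n$ and the pure states $\eta_i\in S_1^d\otimes S_1^d$ --- does not work. You factorize $\Ne^*:M_n\to M_n$ directly, take the Pietsch density $\rho\in S_1^n$, decompose $\rho=\sum_i\lambda_i\rho_i$, and ``lift each $\rho_i$ to a pure state $\eta_i\in S_1^d\otimes S_1^d$ by purification''. This is dimensionally impossible: the reduced density of a pure state in $S_1^d\otimes S_1^d$ has rank at most $d$, while the $\rho_i\in S_1^n$ are arbitrary $n$-dimensional states; moreover, the map $\tilde{T}$ in the factorization has no ``blocks'' from which channels $S_1^d\to S_1^n$ could be read off. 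In the paper the $d$-dimensional data enters through an entirely different device (Remark \ref{positive embedding}): one embeds $j_d:M_n\to\ell_\infty(\mathcal P,M_d)$ with $\mathcal P=CPU(M_n,M_d)$, which is a $d$-isometry on positive elements, so that $\pi_{q,d}(\Ne^*)=\pi_q^o(j_d\circ\Ne^*)$; the channels $\phi_i=\sigma_i^*$ are adjoints of points $\sigma_i\in\mathcal P$, and the pure states $\eta_i$ arise from the norming elements $\sum_i\lambda_i e_i\otimes e_i\otimes M_{a_i}$ of the trace duality (Corollary \ref{cor-Theorm-fact} and Remark \ref{remark pure states}), i.e.\ from a Pietsch factorization on the algebra $\ell_\infty^N(M_d)$, whose blocks are already $d$-dimensional. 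Only then is the general Theorem \ref{mainII}, stated for families of channels $\ell_1(I,S_1^d)\to S_1^n$, the object that gets differentiated.

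A second, related defect is that your picture inverts the roles of the two sides of the duality: the Pietsch density of the channel itself does not carry the ensemble. The paper proves (Lemma \ref{aequalone}, used in the proof of the inequality $\geq$) that the optimal density $a(q)$ converges to $\uno_{M_n}$ as $q\to\infty$, so that in the limit it contributes nothing; all the ensemble information sits on the dual (norming-element) side. Finally, the interchange of $\frac{d}{dp}$ with the inf/sup, which you defer to ``convexity plus compactness'', is exactly where the paper's hard work lies and requires tools your outline does not supply: the positivity-refined factorization theorem (Theorem \ref{factorization-positivity d-version}), Audenaert's continuity of entropy (Theorem \ref{cont-Entropy}), the King--Koldan comparison (Lemma \ref{GenralLieb}), Lemma \ref{limitetrace}, and a careful handling of the fact that the number $N$ of terms in the norming element depends on $p$. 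As written, the proposal cannot be completed; it would have to be restructured around the embedding $j_d$ (or an equivalent device) before the differentiation step can even be formulated.
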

Actually, to have the equality (\ref{Main result restricted capacity}) we must define $C_{prod}^d(\Ne)$ (\ref{d-restricted capacity}) by using the \emph{$\ln$-entropy}, $S(\rho):= -tr(\rho \ln \rho)$, instead of using $\log_2$ as it is usually done in quantum information. However, since both definitions are the same up to a multiplicative factor, we could use the standard entropy $S$ and we should then write (\ref{Main result restricted capacity}) as $C_{prod}^d(\Ne)= \frac{1}{\ln 2}\frac{d}{dp}\big[\pi_{q,d}(\Ne^*)\big]|_{p=1}$. In this work we will always consider $\ln$-entropies to avoid this constant factor.  

As we mentioned before, Hastings' result says that we cannot avoid the regularization
of $C_{prod}^d(\Ne)$ if $d=1$. We will show that the additivity of $C_{prod}^d$ has a particularly bad behavior for $1< d< n$. Indeed, we will prove the following strong non-additive result.
\vspace{-0.15 cm}
\begin{theorem}\label{counterexample additivity}
There exists a channel $\Ne:S_1^{2n}\rightarrow S_1^{2n}$ such that $$C_{prod}^{n}(\Ne\otimes \Ne)\succeq \frac{1}{3}\ln n+ 2C_{prod}^{\sqrt{n}}(\Ne),$$where we use the symbol $\succeq$ to denote inequality up to universal (additive) constants which do not depend on $n$.
\end{theorem}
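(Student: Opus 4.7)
The driving intuition is that $C_{prod}^{n}(\Ne\otimes\Ne)$ allows a single $n$-dimensional entangled ancilla distributed \emph{coherently} across the two input tensor factors of $\Ne\otimes\Ne$, whereas $2\,C_{prod}^{\sqrt n}(\Ne)$ only sees a product of two $\sqrt n$-dimensional ancillas---a state of the same total Schmidt rank $n$ but of a much more rigid tensor structure. One must engineer $\Ne$ so that the extra coherence produces the advertised $\tfrac{1}{3}\ln n$ extra nats of rate.

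I would build $\Ne:S_1^{2n}\to S_1^{2n}$ around a Haar-random isometry $V:\C^{\sqrt n}\to\C^{2n}$: concretely, $\Ne$ should act as a superdense-coding-style instrument on the subspace $V(\C^{\sqrt n})$ and as an almost constant, high-output-entropy channel on the orthogonal complement, so that useful signaling is forced to align with $V(\C^{\sqrt n})$. With this choice, a single-copy protocol with $\sqrt n$-dim entanglement can essentially only access $V(\C^{\sqrt n})$, while a two-copy protocol with coherent $n$-dim entanglement can in addition exploit joint directions inside $(V\otimes V)(\C^n)$ that are unreachable to any product ancilla of the form $V(\C^{\sqrt n})\otimes V(\C^{\sqrt n})$.

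The lower bound on $C_{prod}^{n}(\Ne\otimes\Ne)$ would come from an explicit instrument $(\phi_i,\eta_i)$ in which $\eta_i$ is maximally entangled of Schmidt rank $n$ and is coherently distributed across the two input tensor factors; one then evaluates the three entropy terms in \eqref{d-restricted capacity} via standard Haar-isometry concentration estimates. For the upper bound on $C_{prod}^{\sqrt n}(\Ne)$ I would apply Theorem \ref{mainI} to rewrite it as $\tfrac{d}{dp}\bigl[\pi_{q,\sqrt n}(\Ne^*)\bigr]\big|_{p=1}$ and estimate $\pi_{q,\sqrt n}(\Ne^*)$ via its factorization definition: admissible $a,b\in M_{2n}$ must essentially concentrate on $V(\C^{\sqrt n})$, and noncommutative Khintchine-type bounds in $M_{\sqrt n}(S_p^{2n})$ then pin down the derivative.

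The hardest step is the single-copy upper bound, where one must rule out \emph{every} admissible $(\lambda_i,\phi_i,\eta_i)$, not just the obvious product ones. This is precisely what the $p$-summing reformulation of Theorem \ref{mainI} affords: the whole supremum defining $C_{prod}^{\sqrt n}(\Ne)$ is absorbed into a single operator-space norm, making uniform concentration-of-measure estimates on the random isometry $V$ directly applicable. Tuning the construction so that the lower-bound gain is exactly $\tfrac{1}{3}\ln n$ above $2\,C_{prod}^{\sqrt n}(\Ne)$---and not some smaller fraction---is what dictates the specific parameter choices $\sqrt n$ and $2n$ in the statement and is the most delicate quantitative point of the argument.
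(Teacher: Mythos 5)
There is a genuine gap, and it is conceptual rather than a matter of missing details: the mechanism you rely on would not produce a gain of order $\ln n$, and it is not the mechanism behind the theorem. In your own sketch the only useful input directions per copy lie in $V(\C^{\sqrt{n}})$, so for two copies the useful input subspace is $V(\C^{\sqrt{n}})\otimes V(\C^{\sqrt{n}})$, a \emph{product} subspace. For a superdense-coding-type (covariant) branch with accessible input dimension $k$ and entanglement dimension $d$, the assisted rate scales like $\ln(kd)$; hence splitting the two-copy budget $n=d_1d_2$ in any way gives roughly $(\ln k+\ln d_1)+(\ln k+\ln d_2)=2\ln k+\ln n$, which for $k=\sqrt{n}$ is exactly $2\ln n$, i.e.\ exactly twice the single-copy value $\ln(\sqrt{n}\cdot\sqrt{n})=\ln n$. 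The ``coherent directions'' you invoke buy nothing at logarithmic scale, because in this covariant picture the rate depends only on products of dimensions, not on how the ancilla is correlated across the two factors. Moreover, Haar-random/concentration constructions are the tool for the genuinely hard case $d=1$ (Hastings), where the known violations are of constant (indeed tiny) size, nowhere near $\frac{1}{3}\ln n$; and your text supplies no actual channel, no instrument, and no estimate, only the assertion that parameters can be tuned.

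The paper's proof is far more elementary and exploits precisely the feature you set aside: when two copies are used the entanglement budget is \emph{squared} (from $d$ to $d^2$) and may be allocated \emph{asymmetrically}. One takes $\Ne^*=\Ne_1^*\oplus\Ne_2^*$, a direct-sum (flagged) channel, where $\Ne_1$ is the classical identity on $S_1^n$, so $C_{prod}^d(\Ne_1)=\ln n$ for every $d$ (entanglement-insensitive), and $\Ne_2=\mathcal D_{2/3}$ is the depolarizing channel, for which Theorem \ref{theorem exact capacity dep channel} gives $\frac{2}{3}\ln(nd)-\ln 2\leq C_{prod}^d(\Ne_2)\leq\frac{2}{3}\ln(nd)$ (entanglement-sensitive). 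For a single copy with $d=\sqrt{n}$ both branches are capped near $\ln n$, and the factorization theorem together with covariance of $\Ne_2$ (Lemma \ref{Covariant}) yields $C_{prod}^{\sqrt{n}}(\Ne)\leq\ln n$: the direct sum gives the \emph{maximum}, not the sum, of the branch capacities. For $\Ne\otimes\Ne$ with $d^2=n$ one restricts to the subchannel $\Ne_1\otimes\Ne_2$ and uses the superadditivity estimate $C_{prod}^{n}(\Ne_1\otimes\Ne_2)\geq C_{prod}^1(\Ne_1)+C_{prod}^{n}(\Ne_2)$, proved by restricting to product elements in the $\pi_{q,d}$-norm: send a classical message through the identity branch of the first factor with no entanglement, and spend the entire $n$-dimensional ancilla on the depolarizing branch of the second factor. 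This gives $\ln n+\frac{4}{3}\ln n-\ln 2=2\ln n+\frac{1}{3}\ln n-\ln 2\geq 2C_{prod}^{\sqrt{n}}(\Ne)+\frac{1}{3}\ln n-\ln 2$. No randomness, no concentration of measure, and no coherence across the two factors appears anywhere; the $\frac{1}{3}\ln n$ comes purely from re-allocating the squared entanglement budget between two branches of different entanglement-sensitivity, which is exactly why, as the paper emphasizes, non-additivity for $1<d<n$ is a much softer phenomenon than the $d=1$ case your proposal is modeled on.
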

Theorem \ref{counterexample additivity} says that the general $d$-restricted capacity with $1<d< n$ can be, in fact, very different from $C_{prod}^d$ (the product state version). Nevertheless, we should emphasize that the nature of the non additivity of $C_{prod}^d$ with $1<d< n$ comes from the fact that one must change the entanglement dimension from $d$ to $d^2$ when one considers the tensor product of two channels. This makes the problem of additivity (so the regularization) completely different from the much deeper case $d=1$.

The paper is organized as follows. In the first section we briefly introduce the notion of noncommutative $L_p$ spaces and $\ell_p(S_p^d)$-summing maps. Furthermore, we prove a modified version of Pisier's theorem in order to have a more accurate result for the particular maps that we are considering in this work. In Section \ref{main results section} we give the proof of our main result, Theorem \ref{mainI}, and we explain how to obtain the particular cases commented above. In Section \ref{Section: non-additivity} we explain why the $d$-restricted capacity is easier to compute when we deal with covariant channels and we use this fact to prove Theorem \ref{counterexample additivity}. Finally, in Section \ref{Phy Int} we give an explanation of the restricted classical capacities of quantum channels and we state some of the most important results in the area. We also discuss the physical interpretation of the $C_{prod}^d$ capacity and the connections with some capacities previously studied in \cite{Shor}.
\section{Pisier's theorem for quantum channels}\label{Pisier's theorem Section}
Following the metric theory of tensor product developed first by Grothendieck and subsequently by Pietsch, Lindenstrauss and Pelczynski in terms of $p$-summing maps, in \cite{Pisierbook2} Pisier introduced the notion of completely $p$-summing map between operator spaces. Pisier showed a satisfactory factorization theorem for these kinds of maps, analogous to the existing result in the commutative setting. In this section we will study such a factorization theorem when it is applied to completely positive maps and we will show that in this case one can get some extra properties in the statement of the theorem. Furthermore, in order to define our restricted capacities, we will need to consider the more general $\ell_p(S_p^d)$- summing maps. For the sake of completeness we will start with a brief introduction to noncommutative (vector valued) $L_p$-spaces and completely $p$-summing maps. Since we will restrict our work to finite dimensional von Neumann algebras, we will mainly focus on this setting. However, the theory of noncommutative $L_p$-spaces has been developed in a much more general context and most of the results can be stated in such a general framework. We recommend \cite{Pisierbook2}, \cite{PiXu} for a complete study of the subject. Since the key point to define noncommutative (vector valued) $L_p$-spaces is to consider operator spaces, we will assume the reader to be familiarized with them. We recommend \cite{Pisierbook} for the non familiar reader with the topic.

Let $\A$ be a hyperfinite von Neumann algebra equipped with a faithful normal semi-finite trace $\varphi$. Let us denote $L_\infty(\A):=\A$ and $L_1(\A):=\A_{*}$, where $\A_{*}$ is the predual of $\A$ (with respect to $\varphi$). We recall that $L_1(\A)$ can be described as the completion of the linear space $\{x\in \A: \|x\|_1:=\varphi(|x|)<\infty\}$ (see \cite[Proposition 2.19]{Takesaki}). Then, one can use complex interpolation to define the noncommutative $L_p$-space  $L_p(\A):=[L_\infty(\A),L_1(\A)]_{\frac{1}{p}}$. In the particular case $\A=M_d$ (and $\varphi=tr_{M_d}$) we write $L_p(M_d)=S_p^d$ for $1\leq p< \infty$ and $L_\infty(M_d)=M_d$. Given a linear map $T:L_q(\A)\rightarrow L_p(\B)$, we denote the operator norm by
\begin{align*}
\|T\|=\sup_{A\in \A} \frac{\| T(A)\|_p}{\|A\|_q}.
\end{align*}
In the following, we will just write $id_n$ to denote $id_{M_n}:M_n\rightarrow M_n$. The Banach space $L_p(\A)$ can be endowed with an operator space structure (o.s.s). We regard $\A$ as a subspace of $B(\mathcal H)$ with $\mathcal H$ being the Hilbert space arising from the GNS construction. On the other hand, we can also embed the predual von Neumann algebra $\A_{*}$ on its bidual $\A^*$, to obtain an o.s.s. for $\A_{*}$. The o.s.s. on $L_1(\A)$ is then given by that of $\A_{*}^{op}$. We refer to \cite[Chapter 7]{Pisierbook} for a detailed justification of this definition. Then, the complex interpolation for operator spaces provides a natural o.s.s. on $L_p(\A):=[L_\infty(\A),L_1(\A)]_{\frac{1}{p}}$. The definition of an o.s.s. on $L_p(\A)$ allows us to talk about the completely bounded norm of a map
$T:L_q(\A)\rightarrow L_p(\B)$.  In general, given two operator spaces $E$ and $F$, we define
\begin{align}\label{CB}
\|T\|_{cb}=\sup_{d\in \N}\big\|id_d\otimes T: M_d(E)\rightarrow M_d(F)\big\|
\end{align} or, equivalently,
\begin{align*}
\|T\|_{cb}=\sup_{d\in \N}\Big(\sup_Y\frac{\big\|(id_d\otimes T)(Y)\big\|_{M_d(F)}}{\|Y\|_{M_d(E)}}\Big).
\end{align*}In our particular case, it can be seen (\cite[Lemma 1.7]{Pisierbook2}) that
\begin{align*}
\|Y\|_{M_d(L_p(\A))}=\sup_{A,B\in B_{S_{2p}^d}}\big\|(A \otimes \uno_{\A})Y(B\otimes \uno_{\A})\big\|_{L_p(M_d\otimes_{min}\A)}.
\end{align*}Here, $B_{S_{2p}^d}$ denotes the unit ball of the $2p$ - Schatten class of operators in $M_d$ and $\uno_\A$ denotes the identity of the von Neumann algebra $\A$. Sometimes we will just write $\uno$. 

For our purpose we need to introduce the noncommutative vector valued $L_p$-spaces. We will restrict here to the discrete case because it is the one we will use in this work. We refer to \cite{Pisierbook2} for the more general case of (finite) injective Neumann algebras. Given a Hilbert space $\mathcal  H$, the space of compact operators on $\mathcal H$, $\mathcal K(\mathcal H):=S_\infty(\mathcal H)$, can be endowed with a natural o.s.s. via its natural inclusion on $B(\mathcal H)$, the space of bounded operators on $\mathcal H$. As we explained before, we ca also endow the predual of $B(\mathcal H)$, $S_1(\mathcal H)$, with a natural o.s.s. It is well known that this space is the trace class of operators acting on $\mathcal H$ and it does coincide with the dual of $S_\infty(\mathcal H)$. Then, by complex interpolation we can define an o.s.s. on $S_p(\mathcal H):=[S_\infty(\mathcal H), S_1(\mathcal H)]_{\frac{1}{p}}$. In our case, we will restrict to $\mathcal H=\ell_2$ (or $\mathcal H=\ell_2^n$) and we will just write $S_p(\ell_2)=S_p$ (or $S_p(\ell_2^n)=S_p^n$) for every $1\leq p\leq \infty$. Given any operator space $E$, we will denote $S_\infty[E]=S_\infty\otimes_{min} E$, where $\min$ denotes the minimal tensor norm in the category of operator spaces. On the other hand, Effros and Ruan introduced the space $S_1[E]$ as the (operator) space $S_1\widehat \otimes E$, where $\widehat \otimes$ denotes the projective operator space tensor norm. Then, using complex interpolation Pisier defined the noncommutative vector valued (operator) space $S_p[E]=\big[S_\infty[E], S_1[E]\big]_{\frac{1}{p}}$ for any $1\leq p\leq \infty$ and he proved that this definition leads to obtain the expected properties of $S_p[E]$, analogous to the commutative setting (see \cite[Chapter 3]{Pisierbook2}). We denote $S_p^n[E]=\big[S_\infty^n[E], S_1^n[E]\big]_{\frac{1}{p}}$. One can also check (\cite[Theorem 1.5]{Pisierbook2}) that for every $1\leq p<\infty$ and any operator space $E$, the norm of an element $X\in S_p[E]$ verifies
\begin{align}\label{Norm in S_p[E]}
\|X\|_{S_p[E]}=\inf \big\{\|A\|_{S_{2p}}\|Y\|_{B(\ell_2)\otimes_{\min}E}\|B\|_{S_{2p}}\big\},
\end{align}where the infimum runs over all representations of the form $X=\big(A\otimes \uno_{B(\ell_2)}\big)Y\big(B\otimes \uno_{B(\ell_2)}\big)$. We have an analogous formula for $\|X\|_{S_p^n[E]}$. In this work we will mainly deal with the case $E=S_q^d$ for some $1\leq q\leq \infty$. It can be seen that, given $1\leq p,q\leq \infty$ and defining $\frac{1}{r}=|\frac{1}{p}-\frac{1}{q}|$, we have:

\noindent If $p\leq q$,
\begin{align}\label{norm p< q}
\|X\|_{S_p^n[S_q^d]}=\inf\Big\{\|A\|_{S_{2r}^n}\|Y\|_{S_q^{nd}}\|B\|_{S_{2r}^n}\Big\},
\end{align}where the infimum runs over all representations $X=(A\otimes \uno_{M_d})Y(B\otimes \uno_{M_d})$ with $A,B\in M_n$ and $Y\in M_n\otimes M_d$.

\noindent If $p\geq q$,
\begin{align}\label{norm p>q}
\|X\|_{S_p^n[S_q^d]}=\sup\Big\{\big\|(A\otimes \uno_{M_d})X(B\otimes \uno_{M_d})\big\|_{S_{q}^{nd}}: A,B\in B_{S_{2r}^n}\Big\}.
\end{align}
As an interesting application of this expression for the norm in $S_p[S_q]$ in \cite[Theorem 1.5 and Lemma 1.7]{Pisierbook2} Pisier showed that for a given linear map between operator spaces $T:E\rightarrow F$ we can compute its completely bounded norm as
\begin{align*}
\|T\|_{cb}=\sup_ {d\in \N}\big\|id_d\otimes T:S_t^d[E]\rightarrow S_t^d[F]\big\|
\end{align*}for every $1\leq t\leq \infty$. That is, we can replace $\infty$ in (\ref{CB}) with any $1\leq t\leq \infty$ in order to compute the cb-norm.
\begin{remark}\label{(p,1)}
It is known (\cite{War}, \cite{Aud2}) that if $T$ is completely positive we can compute $\|T:S_q\rightarrow S_p\|$ by restricting to positive elements $A\in S_q$. Moreover, in this case one can also consider positive elements $X\geq 0$ to compute the cb-norm $\|T\|_{cb}=\|id_{S_q}\otimes T:S_q[S_q]\rightarrow S_q[S_p]\|$ (\cite[Section 3]{DJKR}). On the other hand, given a positive element $X\geq 0$, one can consider $A=B> 0$ in the expressions (\ref{norm p< q}) and  (\ref{norm p>q}) for $\|X\|_{S_p^n[S_q^d]}$. According to this, if $X\geq 0$ and $q=1$, (\ref{norm p>q}) becomes
\begin{align*}
\|X\|_{S_p^n[S_1^d]}=\sup_{A>0}\frac{\|(A\otimes \uno_{M_d})X(A\otimes \uno_{M_d})\|_{S_1^{nd}}}{\|A\|_{2p'}^2}
=\|(id_n\otimes tr_d)(X)\|_p,
\end{align*}where $\frac{1}{p}+\frac{1}{p'}=1$. Here and in the rest of the work we use notation $tr_n:=tr_{M_n}$.
\end{remark}
A linear map between operator spaces $T:E\rightarrow F$ is called \emph{completely $p$-summing} if
\begin{align}\label{Def completely p-summing}
\pi_p^o(T):=\pi_{p,\infty}(T)=\big\|id_{S_p}\otimes T:S_p\otimes_{min} E\rightarrow S_p[F]\big\|< \infty.
\end{align}Note that we can write, equivalently,
\begin{align*}
\pi_p^o(T):=\sup_d\pi_p^d(T),
\end{align*}where $\pi_p^d(T)=\big\|id_d\otimes T:S_p^d\otimes_{min} E\rightarrow S_p^d[F]\big\|.$
This definition generalizes the absolutely $p$-summing maps defined in the Banach space category. In \cite{Pisierbook2} Pisier proved that most of the properties of $p$-summing maps have an analogous statement in this noncommutative setting. In particular, it can be seen that the completely $p$-summing maps verify a satisfactory Pietsch factorization theorem (\cite[Theorem 5.1]{Pisierbook2}). The theory of completely $p$-summing maps becomes particularly nice when we consider the case $E=F=M_n$. Then, the definition of the completely $p$-summing norm of the map $T:M_n\rightarrow M_n$ can be stated as
\begin{align*}
\pi_p^o(T):=\sup_d\big\|(id_d\otimes T)\circ \text{flip}:M_n(S_p^d)\rightarrow S_p^d[M_n]\big\|,
\end{align*}where the $\text{flip}$ operator is defined as $\text{flip}(a\otimes b)=b\otimes a$.
Pietsch factorization theorem is particularly simple in this case and has a complete analogous statement to the commutative result (see \cite[Theorem 5.9 and Remark 5.10]{Pisierbook2}). In particular, one can deduce
\begin{align*}
\pi_p^o(T:M_n\rightarrow M_n)=\pi_p^n(T:M_n\rightarrow M_n),
\end{align*} and
\begin{align*}
\pi_p^o(T:M_n\rightarrow M_n)=\sup\Big\{\big|tr(S\circ T)\big|:\pi_q^o(S:M_n\rightarrow M_n)\leq 1\Big\},
\end{align*}where $\frac{1}{p}+\frac{1}{q}=1$. This last assertion follows from the duality theorem proved in \cite[Corollary 3.1.3.9]{Junge-Hab} and the fact that for maps $S:M_n\rightarrow M_n$ the completely $q$-summing norm and the $q$-nuclear norm coincide. In fact, it is very easy to extend this result to maps $S:\ell_\infty^N(M_{k_i})\rightarrow M_n$ as follows:
\begin{align}\label{duality p-summing}
\pi_p^o(T:M_n\rightarrow \ell_\infty^N(M_{k_i}))=\sup\Big\{\big|tr(S\circ T)\big|:\pi_q^o(S:\ell_\infty^N(M_{k_i})\rightarrow M_n)\leq 1\Big\},
\end{align} where $\ell_\infty^N(M_{k_i}):=\bigoplus_{i=1}^NM_{k_i}$.

As we will explain later in detail, in order to consider a general family of restricted capacities we will have to deal with the completely $p$-summing norm of maps defined between finite dimensional von Neumann algebras. Therefore, we will need to adapt Pisier's factorization theorem for completely $p$-summing maps to our particular context. Actually, due to the fact that we will consider quantum channels, we will state such a factorization theorem for these particular maps obtaining some extra properties. We start with the following result, which is essentially proved in \cite{DJKR}. We add here the proof for the sake of completeness.
\begin{prop}\label{CS CP}
Let $\mathcal A$ and $\mathcal B$ be finite dimensional von Neumann algebras and let $T:\A\rightarrow \B$ be a completely positive map. Then, for every $1\leq q\leq \infty$ and any pair of elements $x,y\in S_p[\mathcal A]$ we have
\begin{align*}
\big\|(id_{S_p}\otimes T)(xy)\big\|_{S_p[\mathcal B]}\leq \big\|(id_{S_p}\otimes T)(xx^*)\big\|^\frac{1}{2}_{S_p[\mathcal B]}\big\|(id_{S_p}\otimes T)(y^*y)\big\|^\frac{1}{2}_{S_p[\mathcal B]}.
\end{align*}
\end{prop}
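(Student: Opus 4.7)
My plan is to combine the classical $2\times 2$ matrix dilation trick with the Cauchy--Schwarz inequality for off-diagonal blocks of positive matrices in noncommutative $L_p$-spaces. The key point is that complete positivity of $T$ is precisely the tool needed to promote the \emph{algebraic} positivity of a column-times-row product to positivity of its image after applying $id_{S_p}\otimes T$ entry-wise, after which a standard Schwarz bound for positive $2\times 2$ blocks closes the argument.

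First I would form the column $v=\bigl(\begin{smallmatrix}x\\ y^{*}\end{smallmatrix}\bigr)$ and consider
$$M:=vv^{*}=\begin{pmatrix} xx^{*} & xy \\ y^{*}x^{*} & y^{*}y\end{pmatrix},$$
which is a positive element of $M_{2}\bigl(S_{p}[\A]\bigr)\cong S_{p}[M_{2}(\A)]$; this identification is legitimate because $\A$ is finite-dimensional. Since $T:\A\to\B$ is completely positive, the amplification $id_{M_{2}}\otimes id_{S_{p}}\otimes T$ preserves positivity, and applying it to $M$ yields
$$\begin{pmatrix} (id_{S_{p}}\otimes T)(xx^{*}) & (id_{S_{p}}\otimes T)(xy) \\ (id_{S_{p}}\otimes T)(y^{*}x^{*}) & (id_{S_{p}}\otimes T)(y^{*}y)\end{pmatrix}\geq 0 \quad\text{in } M_{2}\bigl(S_{p}[\B]\bigr).$$

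Next I would invoke the standard Schwarz bound for off-diagonals of positive block matrices: if $\bigl(\begin{smallmatrix}A&B\\ B^{*}&C\end{smallmatrix}\bigr)\geq 0$ in $M_{2}(S_{p}[\mathcal N])$, then $\|B\|_{S_{p}[\mathcal N]}\leq \|A\|_{S_{p}[\mathcal N]}^{1/2}\,\|C\|_{S_{p}[\mathcal N]}^{1/2}$. This is proved by factoring the positive block as $\bigl(\begin{smallmatrix}a\\ b\end{smallmatrix}\bigr)(a^{*}\ b^{*})$ with $a,b\in S_{2p}[\mathcal N]$, so that $A=aa^{*}$, $C=bb^{*}$ and $B=ab^{*}$, and then applying the noncommutative Hölder inequality $\|ab^{*}\|_{p}\leq \|a\|_{2p}\|b\|_{2p}$ together with the identities $\|aa^{*}\|_{p}=\|a\|_{2p}^{2}$ and $\|bb^{*}\|_{p}=\|b\|_{2p}^{2}$. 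Applying this bound to the positive block produced above gives exactly the desired estimate.

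The main obstacle is not the algebra itself but the careful handling of the vector-valued operator space structures: one must verify the identification $S_{p}[M_{2}(\A)]\cong M_{2}(S_{p}[\A])$ as ordered $*$-spaces (so that positivity of $M$ is unambiguous), confirm that complete positivity of $T$ on $\A\to\B$ amplifies to a positivity-preserving map at the level of the $S_p$-valued spaces, and check that the Cholesky-type factorization used in the off-diagonal Schwarz bound is genuinely available in the vector-valued setting. All of this is routine because $\A$ and $\B$ are finite-dimensional (the relevant spaces reduce to honest tensor products), but it is precisely where one invokes the operator-space machinery reviewed earlier in the paper, in particular the factorization formula~(\ref{Norm in S_p[E]}).
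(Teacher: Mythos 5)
Your overall architecture coincides with the paper's: form the dilation $vv^{*}$ with $v=\bigl(\begin{smallmatrix}x\\ y^{*}\end{smallmatrix}\bigr)$, use complete positivity of $T$ to obtain a positive $2\times 2$ block matrix over $S_p[\B]$ (note that positivity here is an order notion inherited from the ambient operators, so no isometric identification of $M_2(S_p[\A])$ with $S_p[M_2(\A)]$ is needed --- nor is such an identification true in general), and then bound the off-diagonal corner. The genuine gap is in your proof of that bound. The factorization of a positive block matrix as $\bigl(\begin{smallmatrix}a\\ b\end{smallmatrix}\bigr)(a^{*}\ b^{*})$ with $a,b\in S_{2p}[\mathcal N]$ is not a standard Cholesky fact, and it fails in the setting where one naturally works: after reducing to $x,y\in S_p^N[\A]$ (as the paper does), everything lives in the finite-dimensional algebra $M_2(M_N\otimes \B)$, where a column-times-row product has rank at most half the maximal rank. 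This is not a peripheral case, because the matrix you must factor is not $vv^{*}$ (which does have that form) but its image under $id\otimes T$, and completely positive maps increase rank: take $T=tr_{\A}(\cdot)\,\uno_{\B}$ and choose $x,y$ with $(id\otimes tr_{\A})(xx^{*})=(id\otimes tr_{\A})(y^{*}y)=\uno_N$ and $(id\otimes tr_{\A})(xy)=0$; the image is then $\uno_{2N}\otimes\uno_{\B}$, which admits no such factorization. What positivity actually provides is either $P=R^{*}R$ with $R$ a full $2\times2$ matrix --- so that the off-diagonal corner is a \emph{sum} of two products, forcing rectangular (row/column) factors --- or the contraction factorization $B=A^{1/2}\Gamma C^{1/2}$ with $\|\Gamma\|\leq 1$ of \cite[Lemma 3.5.12]{HoJo}, which is the route the paper takes.

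A second problem is your closing claim that the remaining verifications are routine because the spaces ``reduce to honest tensor products.'' They do as linear spaces, but not as normed spaces: $S_p[\B]$ carries Pisier's vector-valued norm (\ref{Norm in S_p[E]}) determined by the operator-space structure of $\B$, and this is \emph{not} the Schatten norm of a product trace (for instance $S_p[M_d]\neq S_p(\ell_2\otimes\ell_2^d)$; the latter space is $S_p[S_p^d]$). Consequently the inequalities you invoke, $\|ab^{*}\|_{S_p[\mathcal N]}\leq\|a\|_{S_{2p}[\mathcal N]}\|b\|_{S_{2p}[\mathcal N]}$ and $\|aa^{*}\|_{S_p[\mathcal N]}=\|a\|_{S_{2p}[\mathcal N]}^{2}$, are not the classical noncommutative H\"older facts but their vector-valued analogues. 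They are true, but they are precisely the nontrivial analytic ingredient of this Proposition; the paper imports exactly this from \cite[Lemma 9]{DJKR}, and an argument that treats them as classical H\"older applied to a tensor-product trace would be computing the wrong norms. Once the factorization is replaced by the Horn--Johnson one and the H\"older step is properly sourced, your argument becomes the paper's proof; as written, the two central steps are respectively false and unjustified.
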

\begin{proof}
It suffices to show the result for $x,y\in S_p^N[\mathcal A]$, with $N$ arbitrarily large. 
Let us simplify notation by writing $\tilde{T}=id_{S_p^N}\otimes T$. Given the elements $x,y\in S_p^N[\mathcal A]$, we consider the positive element
$$z= \left(%
\begin{array}{c}
 x  \\
 y^*  \\
\end{array}%
\right)\left(%
\begin{array}{cc}
 x^* & y 
\end{array}%
\right)=\left(%
\begin{array}{cc}
 xx^* & xy \\
 (xy)^* & y^*y \\
\end{array}%
\right)\in M_2(S_p^N[\mathcal A]).$$ 
Since $T$ is completely positive, we know that 
$$(id_{M_2}\otimes \tilde{T})(z)= \left(%
\begin{array}{cc}
 \tilde{T}(xx^*) & \tilde{T}(xy) \\
(\tilde{T}( (xy))^* & \tilde{T}(y^*y) \\
\end{array}%
\right)\in M_2(S_p^N[\mathcal B])$$is a positive element. 
According to \cite[Lemma 3.5.12]{HoJo} this implies that there exists a contraction $C\in M_N\otimes_{min} \mathcal B$ so that $$\tilde{T}(xy)=\tilde{T}(xx^*)^\frac{1}{2}C\tilde{T}(y^*y)^\frac{1}{2}.$$In fact, \cite[Lemma 3.5.12]{HoJo} is stated for $M_N\otimes M_K$, but one can extend the result to finite dimensional von Neumann algebras in a straightforward manner (see also Remark \ref{General Proposition 2.1}). Then, we have that
\begin{align*}
\big\|\tilde{T}(xy)\big\|_{S_p^N[\mathcal B]}=\big\|\tilde{T}(xx^*)^\frac{1}{2}C\tilde{T}(y^*y)^\frac{1}{2}\big\|_{S_p^N[\mathcal B]}\leq \big\|\tilde{T}(xx^*)\big\|^\frac{1}{2}_{S_p^N[\mathcal B]}\big\|\tilde{T}(y^*y)\big\|^\frac{1}{2}_{S_p^N[\mathcal B]},
\end{align*}where the last inequality follows from \cite[Lemma 9]{DJKR} with $p=\infty$.
This concludes the proof.
\end{proof}
Now, we state the main result of this section. In the following, we will write $\|\cdot\|_+$ to denote the corresponding norm $\|\cdot\|$ when we restrict to positive elements.
\begin{theorem}\label{factorization-positivity}
Let $\A$ and $\B$ be finite dimensional  von Neumann algebras and let $T:\A\rightarrow \B$ be a completely positive map. Then, the following assertions are equivalent:
\begin{enumerate}
\item[a)] $\pi_p^o(T)\leq C$.
\item[b)] $\big\|id_{S_p}\otimes T:S_p\otimes_{min} \A\rightarrow S_p[\B]\big\|_+\leq C$.
\item[c)]  There exists a positive element $a\in \A$ verifying $\|a\|_{L_{2p}(\A)}\leq1$ such that for every $x\in S_p\otimes_{min} \A$ we have $$\big\|\big(id_{S_p}\otimes T\big)(x)\big\|_{S_p[\B]}\leq C\big\|(\uno\otimes a)x(\uno\otimes a)\big\|_{S_p[L_p(\A)]}.$$
\item[d)] There exist a positive element $a\in \A$ verifying $\|a\|_{L_{2p}(\A)}\leq1$ and a completely positive linear map $\alpha:L_p(\A)\rightarrow \B$ such that $T=\alpha\circ M_{a}$ and $\|\alpha\|_{cb}\leq C$.
\noindent Here $M_{a}:\A\rightarrow L_p(\A)$ is the linear map defined by $M_a(x)=axa$ for every $x\in \A$.
\end{enumerate}
Furthermore, $\pi_p^o(T)=\inf\Big\{C: C \text{   verifies any of the above conditions}\Big\}.$
\end{theorem}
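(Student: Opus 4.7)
The strategy is to prove the cyclic chain $(a)\Rightarrow(b)\Rightarrow(c)\Rightarrow(d)\Rightarrow(a)$, tracking the constant $C$ at each step so that the infimum identification follows automatically.

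The easy directions are $(a)\Rightarrow(b)$ and $(d)\Rightarrow(a)$. The first is tautological, since restricting the supremum defining $\pi_p^o$ in (\ref{Def completely p-summing}) to positive elements cannot increase the norm. For $(d)\Rightarrow(a)$, I factor $id_{S_p}\otimes T=(id_{S_p}\otimes\alpha)\circ(id_{S_p}\otimes M_a)$. The first factor has norm $\|\alpha\|_{cb}\le C$ as a map $S_p[L_p(\A)]\to S_p[\B]$ by the Pisier identification recalled just after (\ref{CB}). The second factor $x\mapsto(\uno\otimes a)x(\uno\otimes a)$ is controlled in norm by $\|a\|_{L_{2p}(\A)}^2\le 1$ via noncommutative H\"older applied in $L_p$ of the relevant algebra, which is exactly the kind of estimate underlying the definition of $\pi_{q,d}$ recalled in the introduction.

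For $(c)\Rightarrow(d)$, I define $\alpha$ on the range of $M_a$ by $\alpha(axa):=T(x)$; condition (c), applied at the scalar level $d=1$, guarantees that this prescription is well-defined and that $\alpha$ extends by continuity to a cb map $L_p(\A)\to\B$ of cb-norm at most $C$. Complete positivity is manifest when $a$ is invertible, because then $\alpha(y)=T(a^{-1}ya^{-1})$ and conjugation by $a^{-1}$ preserves matricial positivity while $T$ is CP. For general $a\ge 0$ I replace $a$ by $a+\varepsilon\uno_\A$ to reduce to the invertible case, and use finite-dimensional compactness of the set of CP contractions to extract a limit satisfying the same cb bound.

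The core difficulty is $(b)\Rightarrow(c)$. My plan is to adapt the Pietsch-type Hahn--Banach argument underlying Pisier's factorization theorem so that it outputs a single positive weight $a$ rather than the pair $(a,b)$ appearing in the non-CP version. I would consider, inside the self-adjoint part of $L_{p'}(\A)$ with $\tfrac1p+\tfrac1{p'}=1$, the weak-$*$ compact convex set of functionals of the form $y\mapsto\varphi(aya)$ with $a\ge 0$ and $\|a\|_{L_{2p}(\A)}\le 1$, and separate it by Hahn--Banach from the convex hull of obstruction functionals coming from positive $x\in S_p\otimes_{\min}\A$ saturating the inequality $\|(id_{S_p}\otimes T)(x)\|_{S_p[\B]}=C\|x\|_{S_p\otimes_{\min}\A}$; a Ky~Fan min--max then yields the desired $a$. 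The main obstacle I anticipate is the symmetrization step that collapses a hypothetical two-sided weighted inequality into a one-sided positive one, and this is precisely where Proposition \ref{CS CP} is essential: the noncommutative Cauchy--Schwarz inequality for CP maps is exactly what allows one to pass from a bilinear control with distinct left and right weights to a one-sided positive domination without losing a universal constant.
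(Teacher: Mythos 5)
Your outer structure (the cyclic chain, with $a)\Rightarrow b)$, $c)\Rightarrow d)$ and $d)\Rightarrow a)$ treated as routine) matches the paper, which also concentrates all the work in $b)\Rightarrow c)$. The problem is in your plan for that core step, and it is twofold. First, you have misidentified where Proposition \ref{CS CP} enters. The collapse of a two-sided weight pair $(a,b)$ into a single positive weight is \emph{not} what the Cauchy--Schwarz inequality for completely positive maps is for: since hypothesis b) only involves positive inputs $x$, one can symmetrize on the majorizing side using the noncommutative H\"older inequality alone, writing for $x\geq 0$
\begin{align*}
\big\|(\uno\otimes a)x(\uno\otimes b)\big\|_p\leq \big\|(\uno\otimes a)x(\uno\otimes a)\big\|_p^{\frac{1}{2}}\big\|(\uno\otimes b)x(\uno\otimes b)\big\|_p^{\frac{1}{2}},
\end{align*}
which has nothing to do with $T$ being completely positive; this is exactly how the paper reduces to one-sided positive weights \emph{before} running the Hahn--Banach/Pietsch argument. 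Second, and this is the genuine gap: the separation argument you describe, whose obstruction functionals are built only from positive $x$, can at best produce the domination
\begin{align*}
\big\|(id_{S_p}\otimes T)(x)\big\|_{S_p[\B]}\leq C\big\|(\uno\otimes a)x(\uno\otimes a)\big\|_{S_p[L_p(\A)]}
\end{align*}
for \emph{positive} $x$, whereas assertion c) demands it for every $x\in S_p\otimes_{min}\A$, and that full strength is exactly what your own step $c)\Rightarrow d)$ consumes (you need $\alpha$ bounded, indeed completely bounded, on all of $a\A a$, not just on its positive part). Your proposal never addresses this passage from positive to arbitrary elements.

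That extension is precisely the place where Proposition \ref{CS CP} is needed, and it is the real content of the paper's proof. One first reduces to the case where the weight $A$ is invertible, via its support projection $p$, using Proposition \ref{CS CP} together with the positive-element domination to see that $\tilde{T}(x)=\tilde{T}(pxp)$. Then, for arbitrary $x$, one chooses a factorization $x=y^*z$ which is optimal for H\"older relative to $A$, in the sense that $\|AxA\|_p=\|yA\|_{2p}\|zA\|_{2p}$ (this is equation (\ref{decomposition}), and it is here that invertibility of $A$ is used). Finally one applies Proposition \ref{CS CP} to get $\|\tilde{T}(x)\|\leq\|\tilde{T}(y^*y)\|^{\frac{1}{2}}\|\tilde{T}(z^*z)\|^{\frac{1}{2}}$, each factor now being a positive element to which the Pietsch domination applies, yielding $\|\tilde{T}(x)\|\leq C\|AxA\|_p$. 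Without this step, or some substitute (for instance, a proof that for completely positive $\alpha:L_p(\A)\rightarrow\B$ the cb-norm can be computed on positive elements, which you would then have to supply in this generality), your chain does not close.
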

The proof of Theorem \ref{factorization-positivity} is based on a slight modification of the Hahn-Banach argument used in (\cite[Theorem 5.1]{Pisierbook2}) and a non trivial use of Cauchy-Schwartz inequality. Since Pisier's theorem has not been stated for finite dimensional von Neumann algebras and the positivity is always tricky in these contexts we will explain the proof in detail.
\begin{proof}
Since $\A$ is a finite dimensional von Neumann algebra, we can assume $\A=\bigoplus_{i=1}^NM_{k_i}:=\ell_\infty^N(M_{k_i})$. 
The proof of $a) \Rightarrow b)$ is trivial. The implications $c)\Rightarrow d)$ and $d)\Rightarrow a)$ follow by standard arguments. So, we have to show $b)\Rightarrow c)$.

By assumption, for every $x_1,\cdots ,x_m$ positive elements in $S_p\otimes \ell_\infty^N(M_{k_i})$ we have
\begin{align*}\sum_{j=1}^m\big\|(id_{S_p}\otimes T)(x_j)\big\|_ {S_p[\B]}^p\leq C^p\sup_i\sup_{a_i,b_i}\sum_{j=1}^m\big\|(\uno \otimes a_i)x_j(i)(\uno\otimes b_i)\big\|_{S_p(\ell_2\otimes \ell_2^{k_i})}^p,
\end{align*}where $x_j=(x_j(i))_{i=1}^N\in \ell_\infty^N(S_p\otimes M_{k_i})$ for every $j=1,\cdots ,m$ and the supremum on the right hand side is taken over all $a_i$ and $b_i$ positive elements in the unit ball of $S_{2p}^{k_i}$ for every $i=1,\cdots ,N$. Furthermore, as a consequence of the noncommutative generalized H\"older's inequality (see for instance \cite[Section 1]{PiXu}) we deduce that for every $x_1,\cdots ,x_m$ positive elements in $S_p\otimes \ell_\infty^N(M_{k_i})$ we have
\begin{align*}
\sum_{j=1}^m\big\|(id_{S_p}\otimes T)(x_j)\big\|_ {S_p[\B]}^p\leq C^p\sup_i\sup_{a_i}\sum_{j=1}^m\big\|(\uno \otimes a_i)x_j(i)(\uno\otimes a_i)\big\|_{S_p(\ell_2\otimes \ell_2^{k_i})}^p,
\end{align*}where the sup is taken over $a_i$ positive elements in the unit ball of $S_{2p}^{k_i}$.

Following a Hahn-Banach argument as in (\cite[Theorem 5.1]{Pisierbook2}) we can conclude the existence of a sequence of positive numbers $(\lambda_i)_{i=1}^N$ verifying $\sum_{i=1}^N\lambda_i^p=1$ and a sequence of positive elements $a_i$ in the unit ball of $S_{2p}^{k_i}$ such that
\begin{align}\label{Pietsch positive elements}
\big\|(id_{S_p}\otimes T)(x)\big\|_ {S_p[\B]}^p\leq C^p\sum_{i=1}^N\lambda_i^p\big\|(\uno \otimes a_i)x(i)(\uno \otimes a_i)\big\|_{S_p(\ell_2\otimes \ell_2^{k_i})}^p
\end{align}for every positive element $x\in S_p\otimes \ell_\infty^N(M_{k_i})$.

Indeed, in this case one can consider the set 
\begin{align*}
S=B^+_{S_{2p}^{k_1}}\cup B^+_{S_{2p}^{k_2}}\cup\cdots\cup B^+_{S_{2p}^{k_N}},
\end{align*}where $B^+_{S_{2p}^{k_i}}$ denotes the set of positive elements in the unit ball of $S_{2p}^{k_i}$.  We also define the set of functions
\begin{align*}
\mathcal F:=\Big\{g_{x_1,\cdots, x_m}:S\rightarrow \R \text{   } \big| \text{   }m\in \N, x_j\in S_p\otimes \ell_\infty^N(M_{k_i})  \text{ positive  for every } j=1,\cdots ,m\Big\},
\end{align*}where for every $a\in B^+_{S_{2p}^{k_i}}$
\begin{align*}
g_{x_1,\cdots, x_m}(a)=C^p\sum_{j=1}^m\Big(\big\|(\uno\otimes a)x_j(i)(\uno\otimes a)\big\|_{S_p(\ell_2\otimes \ell_2^{k_i})}^p- \big\|(id_{S_p}\otimes T)(x_j)\big\|_ {S_p[\B]}^p\Big)
\end{align*}for every $x_1,\cdots ,x_m$ positive elements in $S_p\otimes \ell_\infty^N(M_{k_i})$. Then, one can mimic the argument in \cite{Pisierbook2} by using (\cite[Lemma 5.2]{Pisierbook2}), (\cite[Lemma 1.1.4]{Pisierbook2}) and the fact that $S$ is compact in our case to deduce the existence of a probability distribution $(\beta_i)_{i=1}^N$ and a sequence of elements $(a_i)_{i=1}^N$ with $a_i\in  B^+_{S_{2p}^{k_i}}$ verifying
\begin{align}\label{majoration}
0\leq C^p\sum_{i=1}^N\sum_{j=1}^m\beta_i^\alpha\Big(\big\|(\uno \otimes a_i)x_j(i)(\uno \otimes a_i)\big\|_{S_p(\ell_2\otimes \ell_2^{k_i})}^p- \sum_{j=1}^m\big\|(id_{S_p}\otimes T)(x_j)\big\|_ {S_p[\B]}^p\Big)
\end{align}for every positive elements $x_1,\cdots ,x_m$ in $S_p\otimes \ell_\infty^N(M_{k_i})$. Then, (\ref{Pietsch positive elements}) can be obtained from (\ref{majoration}) by restricting to $m=1$ and defining $\lambda_i=\beta_i^\frac{1}{p}$ for every $i$.

Let us simplify notation by writing $\tilde{T}=id_{S_p}\otimes T$, $\tilde{a}_i=\uno\otimes a_i\in B(\ell_2\otimes \ell_2^{k_i})$ and $A=\sum_{i=1}^N\lambda_i^\frac{1}{2}e_i\otimes \tilde{a}_i\in \ell_\infty^N\otimes B(\ell_2\otimes \ell_2^{k_i})$. Then, Equation (\ref{Pietsch positive elements}) becomes
\begin{align}\label{Pietsch positive elements II}
\|\tilde{T}(x)\|_ {S_p[\B]}^p\leq C^p\|AxA\|_{\ell_p^N\big(S_p(\ell_2\otimes \ell_2^{k_i})\big)}^p
\end{align}for every positive element $x\in S_p\otimes \ell_\infty^N(M_{k_i})$. To finish the proof we will show that (\ref{Pietsch positive elements II}) holds for every $x\in S_p\otimes \ell_\infty^N(M_{k_i})$.

First note that we can assume that $A$ is invertible. To see this, just note that every element can be written as $x=pxp+(1-p)xp+px(1-p)+(1-p)x(1-p)$, where here $p$ denotes the support projection of $A$. Then, it is very easy to deduce from Proposition \ref{CS CP} and (\ref{Pietsch positive elements II}) that $\tilde{T}(x)=\tilde{T}(pxp)$. Therefore, if $A$ is not invertible, we can restrict to the finite dimensional von Neumann algebra $p\mathcal Ap$. On the other hand, for every $x\in \ell_\infty^N\big(B(\ell_2\otimes \ell_2^{k_i})\big)$ we have
\begin{align}\label{decomposition}
\|AxA\|_{\ell_p^N\big(S_p(\ell_2\otimes \ell_2^{k_i})\big)}=\inf\Big\{\|yA\|_{\ell_{2p}^N\big(S_{2p}(\ell_2\otimes \ell_2^{k_i})\big)}\|zA\|_{\ell_{2p}^N\big(S_{2p}(\ell_2\otimes \ell_2^{k_i})\big)}\Big\},
\end{align}where the infimum is taken over all possible decompositions $x=y^*z$. Indeed, inequality $\leq$ follows from the noncommutative H\"older's inequality. To see that the infimum is attained we use the fact that we can write $AxA=x_1x_2$ so that
\begin{align*}
\|AxA\|_{\ell_p^N\big(S_p(\ell_2\otimes \ell_2^{k_i})\big)}=\|x_1\|_{\ell_{2p}^N\big(S_{2p}(\ell_2\otimes \ell_2^{k_i})\big)}\|x_2\|_{\ell_{2p}^N\big(S_{2p}(\ell_2\otimes \ell_2^{k_i})\big)}.
\end{align*}Therefore, if we define $y^*=A^{-1}x_1$ and $z=x_2A^{-1}$, we have $y^*z=x$ and
\begin{align*}
\|yA\|_{\ell_{2p}^N\big(S_{2p}(\ell_2\otimes \ell_2^{k_i})\big)}\|zA\|_{\ell_{2p}^N\big(S_{2p}(\ell_2\otimes \ell_2^{k_i})\big)}=\|AxA\|_{\ell_p^N\big(S_p(\ell_2\otimes \ell_2^{k_i})\big)}.
\end{align*}
Thus, we finish our proof in the following way. Given any element $x\in S_p\otimes \ell_\infty^N(M_{k_i})$, we find elements $y,z$ so that $x=y^*z$ and
\begin{align*}
\|AxA\|_{\ell_p^N\big(S_p(\ell_2\otimes \ell_2^{k_i})\big)}=\|yA\|_{\ell_{2p}^N\big(S_{2p}(\ell_2\otimes \ell_2^{k_i})\big)}\|zA\|_{\ell_{2p}^N\big(S_{2p}(\ell_2\otimes \ell_2^{k_i})\big)}.
\end{align*}Then, according to Proposition \ref{CS CP} and (\ref{Pietsch positive elements II}) we have
\begin{align*}
\|\tilde{T}(x)\|_{S_p[\B]}& \leq \|\tilde{T}(y^*y)\|_{S_p[\B]}^\frac{1}{2}\|\tilde{T}(z^*z)\|_{S_p[\B]}^\frac{1}{2}\\& \leq C\|yA\|_{\ell_{2p}^N\big(S_{2p}(\ell_2\otimes \ell_2^k)\big)}\|zA\|_{\ell_{2p}^N\big(S_{2p}(\ell_2\otimes \ell_2^{k_i})\big)}\\&=C\|AxA\|_{\ell_p^N\big(S_p(\ell_2\otimes \ell_2^{k_i}\big)}.
\end{align*}
This proves c).

The final statement on the constant $C$ follows easily by standard arguments.
\end{proof}
\begin{remark}\label{General Proposition 2.1}In fact, a more involved argument based on \cite{JuRu} allows to prove Proposition \ref{CS CP} for completely positive and normal maps $T:\A \rightarrow \B$ between general von Neumann algebras. Hence, Theorem \ref{factorization-positivity} (and Theorem  \ref{factorization-positivity d-version} below) can be proved exactly in the same way for a general von Neumann algebra $\B$.
\end{remark}
The following corollary will be very important.
\begin{corollary}\label{cor-Theorm-fact}
Given a completely positive map $T:M_n\rightarrow \ell_\infty^N(M_k)$, we have that
\begin{align}\label{trace duality positive}
\pi_q^o(T)=\sup\Big\{\big|tr (S\circ T)\big|: \pi_{p}^o\big(S:\ell_\infty^N(M_k)\rightarrow M_n\big)\leq 1 \text{ with $S$ completely positive}  \Big\},
\end{align}where $\frac{1}{p}+\frac{1}{q}=1$.

In particular,
\begin{align}\label{key positive}
\pi_q^o(T)=\Big\|\emph{flip}\circ (id_{\ell_p^N(S_p^k)}\otimes T^*):\ell_p^N(S_p^k)\big[\ell_1^N(S_1^k)\big]\rightarrow S_1^n\big[\ell_p^N(S_p^k)\big]\Big\|_+
\end{align}and the norm can be computed restricting to elements of the form $\rho=\sum_{i=1}^N\lambda_i e_i\otimes e_i\otimes M_{a_i}$ with $a_i\in S_{2p}^k$ positive for every $i=1,\cdots ,N$. Here, we identify the tensor $M_{a_i}\in M_k\otimes M_k$ with the associated map $M_{a_i}:M_k\rightarrow M_k$.
\end{corollary}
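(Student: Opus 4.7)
The first identity~(\ref{trace duality positive}) refines the general trace-duality~(\ref{duality p-summing}) to the cone of completely positive test maps $S$. The inequality ``$\geq$'' is immediate, so only the reverse direction requires work. The strategy combines two ingredients: (i) the positive restriction of the completely $p$-summing norm from Theorem~\ref{factorization-positivity}(b), which says that for a completely positive $T$ the quantity $\pi_q^o(T)$ is already realized by testing $id_{S_q}\otimes T$ against \emph{positive} tensors $x\in S_q\otimes_{min} M_n$; and (ii) a Hahn-Banach separation on the positive cone inside $S_q[\ell_\infty^N(M_k)]$, whose dual is $S_p[\ell_1^N(S_1^k)]$ by Pisier's duality (\cite{Pisierbook2}). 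A positive norming functional for $(id_{S_q}\otimes T)(x)$ is then a positive element of the predual, which via the natural trace pairing corresponds to a completely positive map $S:\ell_\infty^N(M_k)\to M_n$. Running Theorem~\ref{factorization-positivity}(b) in the reverse direction for this $S$ translates the norming inequality into $\pi_p^o(S)\leq 1$.

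The second identity~(\ref{key positive}) is then a formal rewriting of~(\ref{trace duality positive}). Using the trace pairings to identify $(S_q[\ell_\infty^N(M_k)])^*\cong S_p[\ell_1^N(S_1^k)]$ and $(S_q\otimes_{min} M_n)^*\cong S_p[S_1^n]$, the flip converts the bilinear form $(x,\xi)\mapsto \langle (id_{S_q}\otimes T)(x),\xi\rangle$ into a linear map into $S_1^n[\ell_p^N(S_p^k)]$ whose norm on the positive cone is precisely the supremum in~(\ref{trace duality positive}). The ``$+$'' subscript in~(\ref{key positive}) is another instance of Theorem~\ref{factorization-positivity}(b), this time applied to $T^*$.

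For the restriction to test elements of the form $\rho=\sum_i \lambda_i\, e_i\otimes e_i\otimes M_{a_i}$, I argue by two successive diagonalizations. Since the outer algebra is the commutative $\ell_\infty^N$, averaging a general positive $\rho$ by the diagonal action of $\mathbb{T}^N$ (equivalently, compressing by the conditional expectation onto the diagonal subalgebra of $\ell_\infty^N\otimes \ell_\infty^N$) preserves positivity and does not decrease the relevant pairing, so one may assume the block-diagonal form $\sum_i \lambda_i\, e_i\otimes e_i\otimes \rho_i$ with $\rho_i\geq 0$ in $S_p^k[S_1^k]$ and $\sum_i \lambda_i^p=1$. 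Then Remark~\ref{(p,1)}, combined with formulae~(\ref{norm p< q}) and~(\ref{norm p>q}) for $\|\cdot\|_{S_p^n[S_q^d]}$, characterises the extremal positive $\rho_i$ as those of the form $(a_i\otimes\uno)X(a_i\otimes\uno)$ with $a_i\in S_{2p}^k$ positive, and after a trivial normalisation these are exactly the tensors $M_{a_i}\in M_k\otimes M_k$.

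The main obstacle is step (i): carrying out the Hahn-Banach separation entirely within the CP category and verifying that the resulting norming element of the predual genuinely corresponds to a completely positive map $S$ with $\pi_p^o(S)\leq 1$. This is exactly the point where Theorem~\ref{factorization-positivity}(b) is essential in both directions---first to realise $\pi_q^o(T)$ as a positive-cone norm, and then to read off the bound on $\pi_p^o(S)$ from the Pietsch data of the norming functional.
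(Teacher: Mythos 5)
Your overall architecture matches the paper's (reduce to positive inputs via Theorem \ref{factorization-positivity}(b), then make the dual/functional side positive, then rewrite as (\ref{key positive}) and identify the extremal elements), but the central step of your plan is asserted rather than proved, and it is precisely the step where all the work lies. You claim that ``a Hahn--Banach separation on the positive cone inside $S_q[\ell_\infty^N(M_k)]$'' produces a \emph{positive} norming functional in the unit ball of $S_p[\ell_1^N(S_1^k)]$ for the positive element $(id_{S_q}\otimes T)(x)$. Hahn--Banach gives a norming functional in the dual unit ball, but no general principle lets you take it positive: in the vector-valued spaces $S_p[\ell_1^N(S_1^k)]$ the passage from a self-adjoint functional to its positive part is not norm-controlled, and the statement that the positive part of the dual unit ball norms the positive cone is exactly the nontrivial content of the corollary, not a standard fact one can cite. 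The paper proves it by a concrete factorization argument: writing $\hat{\alpha}=(A\otimes \uno)\hat{Y}(B\otimes \uno)$ via (\ref{Norm in S_p[E]}), it invokes Wittstock's factorization theorem (\cite[Theorem 8.4]{Paulsen}) to split the unit-ball element $\hat{Y}$ of $B(\ell_2)\otimes_{min}\ell_1^N(S_1^k)$ as $\hat{Y}=\hat{Y}_1\hat{Y}_2$ with both $\hat{Y}_1\hat{Y}_1^*$ and $\hat{Y}_2^*\hat{Y}_2$ positive and contractive, and then applies Cauchy--Schwarz with respect to the positive element $(id_{S_q}\otimes T)(\hat{\beta})$ to dominate the pairing with an arbitrary unit-ball functional by pairings with positive ones. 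Nothing in your proposal replaces this argument; without it, ``the reverse direction'' of (\ref{trace duality positive}) is not established.

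There is a second, smaller gap in your treatment of the special form $\rho=\sum_i\lambda_i e_i\otimes e_i\otimes M_{a_i}$. Your torus-averaging does reduce to block-diagonal positive elements $\sum_i\lambda_i e_i\otimes e_i\otimes\rho_i$, but Remark \ref{(p,1)} together with (\ref{norm p< q}) and (\ref{norm p>q}) only yields $\rho_i=(a_i\otimes\uno)Y_i(a_i\otimes\uno)$ with $Y_i$ a positive contraction; this is strictly more general than the tensor $M_{a_i}$ of a multiplication map, so your ``characterisation of extremal $\rho_i$'' does not follow from those formulas. The paper obtains the form $M_{a_i}$ not by an extremality analysis but directly from the Pietsch factorization of Theorem \ref{factorization-positivity}(d) applied to the (now completely positive) map $S$: the optimal $S$ factors as $\beta\circ M_a$ with $a=\sum_i e_i\otimes a_i$ positive in $\ell_\infty^N(M_k)$, and the tensor associated to $M_a:\ell_\infty^N(M_k)\rightarrow\ell_p^N(S_p^k)$ is exactly $\sum_i\lambda_i e_i\otimes e_i\otimes M_{a_i}$; this is also what justifies replacing $S_p$ by $\ell_p^N(S_p^k)$ in (\ref{key positive}). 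You should route this part of the argument through the factorization theorem rather than through the norm formulas.
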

\begin{proof}
According to (\ref{duality p-summing}) for any linear map $T:M_n\rightarrow \ell_\infty^N(M_k)$ we have
\begin{align}\label{trace duality}
\pi_q^o(T)=\sup\Big\{\big|tr (T\circ S)\big|: \pi_{p}^o\big(S:\ell_\infty^N(M_k)\rightarrow M_n\big)\leq 1 \Big\},
\end{align}where $\frac{1}{p}+\frac{1}{q}=1$. Therefore, we only have to prove inequality $\leq $ in (\ref{trace duality positive}).
Let us briefly explain the relation between the duality in (\ref{trace duality positive}) and Theorem  \ref{factorization-positivity}. We consider the diagram
$$\xymatrix@R=0.5cm@C=1cm {{\ell_\infty^N(M_k)}\ar[rd]^{\alpha}\ar[rr]^{S} & {} & 
{M_n}\ar[rr]^{T} & {} & {\ell_\infty^N(M_k).}\\
{ } & {S_p}\ar[ru]^{\beta} &{} &{} &{}}
$$We see that $tr (T\circ S)= \langle(id_{S_q}\otimes T)(\hat{\beta}), \hat{\alpha}\rangle$, where $\hat{\alpha}\in S_p(\ell_1^N(S_1^k))$ and $\hat{\beta}\in S_q\otimes_{min} M_n$ are the corresponding tensors to the maps $\alpha$ and $\beta$ respectively. Now, according to Theorem  \ref{factorization-positivity}, in order to norm $T$ we can assume that $\hat{\beta}$ is a positive element. This is equivalent to say that the map $\beta$ can be assumed to be completely positive (see for instance \cite[Theorem 3.14]{Paulsen}). Hence, it suffices to show that $\alpha$ can be assumed to be a completely positive map. To this end, we will show that the element $\hat{\alpha}$ can be assumed to be positive. Since $\pi_{p}^o\big(S:\ell_\infty^N(M_k)\rightarrow M_n\big)\leq 1$, we can assume that $\hat{\alpha}$ and $\hat{\beta}$ are in the unit ball of  $S_p(\ell_1^N(S_1^k))$ and $S_q\otimes_{min} M_n$ respectively. In particular, according to (\ref{Norm in S_p[E]}) there must exist elements $A,B$ in the unit ball of $S_{2p}$ and $\hat{Y}$ in the unit ball of $B(\ell_2)\otimes_{min} \ell_1^N(S_1^k)$ such that $$\hat{\alpha}=(A\otimes \uno_{\ell_\infty^N(M_k)})\hat{Y}(B\otimes \uno_{\ell_\infty^N(M_k)}).$$We claim that $\hat{Y}=\hat{Y}_1\hat{Y}_2$ with both $\hat{Y}_1\hat{Y}_1^*$ and $\hat{Y}_2^*\hat{Y}_2$ in the unit ball of $B(\ell_2)\otimes_{min} \ell_1^N(S_1^k)$. Once we have this, we can conclude the proof by using Cauchy-Schwartz inequality:
\begin{align*}
\big|tr (T\circ S)\big|= \big|\langle(id_{S_q}\otimes T)(\hat{\beta}), \hat{\alpha}\rangle\big|=\big|\langle(id_{S_q}\otimes T)(\hat{\beta}), Z_1Z_2\rangle\big|\\\leq \big|\langle(id_{S_q}\otimes T)(\hat{\beta}), Z_1Z_1^*\rangle\big|^\frac{1}{2}\big|\langle(id_{S_q}\otimes T)(\hat{\beta}), Z_2^*Z_2\rangle\big|^\frac{1}{2},
\end{align*}where we denote $Z_1=(A\otimes \uno_{\ell_\infty^N(M_k)})\hat{Y}_1$ and $Z_2=\hat{Y}_2(B\otimes \uno_{\ell_\infty^N(M_k)})$. Here, we have used that $(id_{S_q}\otimes T)(\hat{\beta})$ is a positive element, since $\hat{\beta}$ is positive and $T$ is completely positive. Using that both  $Z_1Z_1^*$ and $Z_2^*Z_2$ are positive elements in the unit ball of $S_p(\ell_1^N(S_1^k))$, we conclude that 
$$\big|tr (T\circ S)\big|\leq \sup\Big\{\big| \langle(id_{S_q}\otimes T)(\hat{\beta}),\hat{\alpha}\rangle\big|\Big\},$$where the supremum runs over al positive elements $\hat{\alpha}$ in the unit ball of $S_p(\ell_1^N(S_1^k))$, as we wanted.

It remains to prove our claim. To this end,  we recall that
$$\big\|\hat{Y}\big\|_{B(\ell_2)\otimes_{min} \ell_1^N(S_1^k)}=\big\|Y:\ell_\infty^N(M_k)\rightarrow B(\ell_2)\big\|_{cb},$$where $Y$ is the linear map associated to the tensor $\hat{Y}.$ Then, we can invoke Wittstock's factorization theorem (see \cite[Theorem 8.4]{Paulsen}) to find a Hilbert space $K$, a $*$-homomorphism $\pi:\ell_\infty^N(M_k)\rightarrow B(K)$ and some contractions $V,W:\ell_2\rightarrow K$ so that
$$Y(x)=V^*\pi(x)W$$for every $x\in \ell_\infty^N(M_k)$. By regarding $\pi(e_{i,j}^k)=\pi(e_{i,1}^ke_{1,j}^k)=\pi(e_{i,1}^k)\pi(e_{1,j}^k)$, we can define linear maps $Y_1: \ell_\infty^N(M_k)\rightarrow B(K, \ell_2)$, and $Y_2:\ell_\infty^N(M_k)\rightarrow B(\ell_2,K)$
so that $\hat{Y}_1\hat{Y}_2=\hat{Y}$. Furthermore, the linear maps associated to $\hat{Y}_1\hat{Y}_1^*$ and $\hat{Y}_2^*\hat{Y}_2$ are given by $Y_1Y_1^*(e_{i,j}^k)=V^*\pi(e_{i,j}^k)V$ and $Y_2^*Y_2(e_{i,j}^k)=W^*\pi(e_{i,j}^k)W$ for every $i,j,k$ respectively. Thus, we easily conclude that both elements are in the unit ball of the space $B(\ell_2)\otimes_{min} \ell_1^N(S_1^k)$.

Finally, the last part of the statement (\ref{key positive}) follows directly from Theorem \ref{factorization-positivity}, (\ref{trace duality positive}) and duality.
Indeed, on the one hand, the dual version of (\ref{Def completely p-summing}) in our particular case says that
\begin{align*}
\pi_q^o(T):=\big\|\text{flip}\circ (id_{S_p}\otimes T^*):S_p[\ell_1^N(S_1^k)]\rightarrow S_1^n[S_p]\big\|.
\end{align*}On the other hand, once we know that $S$ can be assumed to be positive, Theorem  \ref{factorization-positivity} tells us that the real picture corresponding to Equation (\ref{trace duality positive}) is 
$$\xymatrix@R=0.5cm@C=1cm {{\ell_\infty^N(M_k)}\ar[rd]^{\alpha}\ar[rr]^{S} & {} & 
{M_n}\ar[rr]^{T} & {} & {\ell_\infty^N(M_k)}\\
{ } & {\ell_p^N(S_p^k)}\ar[ru]^{\beta} &{} &{} &{}},
$$where $\hat{\alpha}=\sum_{i=1}^N\lambda_ie_i\otimes e_i\otimes M_{a_i}\in \ell_p^N(S_p^k)\big(\ell_1^N(S_1^k)\big)$. This means, in particular, that $S_p$ can be replaced by $\ell_p^N(S_p^k)$ in the previous expression for $\pi_q^o(T)$ and, furthermore, the norm is attained on elements which have the same form as $\hat{\alpha}$. Indeed, by considering optimal maps $\alpha$ and $\beta$ we have 
\begin{align*}
\big|tr (T\circ S)\big|\leq \big| \langle(id_{\ell_p^N(S_p^k)}\otimes T)(\hat{\beta}),\hat{\alpha}\rangle\big|=\big| \langle \hat{\beta},(id_{\ell_p^N(S_p^k)}\otimes T^*)(\hat{\alpha})\rangle\big|.
\end{align*}
\end{proof}
In this work we will need to consider the following generalization of completely $p$-summing maps. A linear map between operator spaces $T:E\rightarrow F$ is called \emph{$\ell_p(S_p^d)$-summing map} if
\begin{align*}
\pi_{p,d}(T):=\big\|id_{\ell_p(S_p^d)}\otimes T:\ell_p(S_p^d)\otimes_{min} E\rightarrow \ell_p(S_p^d)[F]\big\|< \infty.
\end{align*}
The author should note the difference between notation $\pi_{p,d}(T)$ above and notation $\pi_p^d(T)$ introduced in page 9-10.

Note that the case $d=\infty$ above corresponds to the completely $p$-summing maps. On the other hand, the case $d=1$ was introduced by the first author and they are called \emph{$(p,cb)$-summing maps} (see \cite{Junge-Hab}). They can be considered as a generalization of the absolutely $p$-summing maps to an intermediate setting between these maps and the completely $p$-summing maps. It can be seen that $\ell_p(S_p^d)$-summing maps verify a Pietsch factorization theorem analogous to the theorem for completely $p$-summing maps (see \cite[remark 5.11]{Pisierbook2}). Actually, following the proof of Theorem \ref{factorization-positivity} word by word one can show an analogous version of the theorem for the $\ell_p(S_p^d)$-summing maps. More specifically, one has the following result.
\begin{theorem}\label{factorization-positivity d-version}
Let $\A$ and $\B$ be finite dimensional  von Neumann algebras and let $T:\A\rightarrow \B$ be a completely positive map. Then, the following assertions are equivalent:
\begin{enumerate}
\item[a)] $\pi_{p,d}(T)\leq C$.
\item[b)] $\big\|id_{\ell_p(S_p^d)}\otimes T:\ell_p(S_p^d)\otimes_{min} \A\rightarrow \ell_p(S_p^d)[\B]\big\|_+\leq C$.
\item[c)]  There exists a positive element $a\in \A$ verifying $\|a\|_{L_{2p}(\A)}\leq1$ such that for every $x\in S_p^d\otimes_{min} \A$ we have $$\big\|\big(id_{S_p^d}\otimes T\big)(x)\big\|_{S_p^d[\B]}\leq C\big\|(\uno_{S_p^d}\otimes a)x(\uno_{S_p^d}\otimes a)\big\|_{S_p^d[L_p(\A)]}.$$
\item[d)] There exist a positive element $a\in \A$ verifying $\|a\|_{L_{2p}(\A)}\leq1$ and a completely positive linear map $\alpha:L_p(\A)\rightarrow \B$ such that $T=\alpha\circ M_{a}$ and $\|\alpha\|_d\leq C$.
\noindent Here $M_{a}:\A\rightarrow L_p(\A)$ is the linear map defined by $M_a(x)=axa$ for every $x\in \A$ and $\|\alpha\|_d=\big\|id_d\otimes \alpha:M_d(L_p(\A))\rightarrow M_d(\B)\big\|$.
\end{enumerate}
Furthermore, $\pi_{p,d}(T)=\inf\Big\{C: C \text{   verifies any of the above conditions}\Big\}.$
\end{theorem}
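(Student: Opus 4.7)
The plan is to run the proof of Theorem \ref{factorization-positivity} essentially verbatim, with $S_p$ replaced by $\ell_p(S_p^d)$ in the testing norm and $\|\alpha\|_{cb}$ replaced by $\|\alpha\|_d$ in the factorization, since the test for $\pi_{p,d}$ only probes the $d$-level and not all matrix levels. The implication $a)\Rightarrow b)$ is trivial. For $d)\Rightarrow a)$, I would factor $id_{\ell_p(S_p^d)}\otimes T = (id_{\ell_p(S_p^d)}\otimes\alpha)\circ(id_{\ell_p(S_p^d)}\otimes M_a)$ and estimate each piece: the multiplier $M_a$ is bounded from $\ell_p(S_p^d)\otimes_{\min}\A$ into $\ell_p(S_p^d)[L_p(\A)]$ by $\|a\|_{L_{2p}(\A)}^2\le 1$ via the Hölder-type inequality built into (\ref{norm p>q}), while $id_{\ell_p(S_p^d)}\otimes\alpha$ collapses (by the definition of $\ell_p(S_p^d)[\B]$ and the fact that $\alpha$ is completely positive so the norm reduces to $S_p^d$-level testing as in Remark \ref{(p,1)}) to a sup controlled by $\|\alpha\|_d\le C$. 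The implication $c)\Rightarrow d)$ is the standard Pietsch extension: the inequality in c) says the map $M_a(x)\mapsto (id_{S_p^d}\otimes T)(x)$ is well defined on the subspace $\{(\uno\otimes a)y(\uno\otimes a)\}\subset S_p^d[L_p(\A)]$ with $\|\cdot\|_d\le C$; extend by Wittstock/Arveson to a completely positive $\alpha:L_p(\A)\to\B$ with $\|\alpha\|_d\le C$ (observing that the extension preserves complete positivity because $M_a$ and $T$ are).

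The main work, as in Theorem \ref{factorization-positivity}, is $b)\Rightarrow c)$. I would again write $\A=\ell_\infty^N(M_{k_i})$ and run the Hahn-Banach separation on the convex cone of functions
\[
g_{x_1,\dots,x_m}(a)=C^p\sum_{j=1}^m\Bigl(\bigl\|(\uno\otimes a)x_j(i)(\uno\otimes a)\bigr\|_{S_p(\ell_2^d\otimes\ell_2^{k_i})}^p-\bigl\|(id_{S_p^d}\otimes T)(x_j)\bigr\|_{S_p^d[\B]}^p\Bigr),
\]
now parametrized over positive elements of the compact set $S=\bigcup_i B^{+}_{S_{2p}^{k_i}}$, where the $x_j$ are positive elements of $S_p^d\otimes\A$. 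The Hölder reduction collapsing $(a_i,b_i)\rightsquigarrow a_i$ is intrinsic to $\A$ and therefore identical to the $S_p$ case. The only change is at the left-hand side of the hypothesis b): positivity of $x\in\ell_p(S_p^d)\otimes_{\min}\A$ over $\ell_p^N$ together with monotonicity of $\ell_p$-sums lets us pull the sup over $i$ inside, producing, after Hahn-Banach, a probability $(\lambda_i^p)_{i=1}^N$ and positive $a_i\in B^{+}_{S_{2p}^{k_i}}$ with
\[
\bigl\|(id_{S_p^d}\otimes T)(x)\bigr\|_{S_p^d[\B]}^p\le C^p\sum_i\lambda_i^p\bigl\|(\uno\otimes a_i)x(i)(\uno\otimes a_i)\bigr\|_{S_p(\ell_2^d\otimes\ell_2^{k_i})}^p
\]
for every positive $x\in S_p^d\otimes\ell_\infty^N(M_{k_i})$. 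Setting $a=\sum_i\lambda_i^{1/2}e_i\otimes a_i\in\A$ gives $\|a\|_{L_{2p}(\A)}\le 1$ and rewrites the right side as $\|(\uno\otimes a)x(\uno\otimes a)\|_{S_p^d[L_p(\A)]}^p$ via (\ref{norm p>q}).

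Finally, to pass from positive $x$ to arbitrary $x\in S_p^d\otimes_{\min}\A$, I would argue as in Theorem \ref{factorization-positivity}: assume $a$ invertible (otherwise restrict to $p\A p$ where $p$ is the support of $a$, using Proposition \ref{CS CP} to see $\tilde T(x)=\tilde T(pxp)$), factor $x=y^*z$ so that the bilinear Hölder decomposition (\ref{decomposition}) is attained, and apply Proposition \ref{CS CP} together with the positive case to obtain
\[
\|\tilde T(x)\|_{S_p^d[\B]}\le\|\tilde T(y^*y)\|_{S_p^d[\B]}^{1/2}\|\tilde T(z^*z)\|_{S_p^d[\B]}^{1/2}\le C\|(\uno\otimes a)x(\uno\otimes a)\|_{S_p^d[L_p(\A)]}.
\]
The main obstacle I anticipate is the bookkeeping between the outer $\ell_p^N$ in the definition of $\pi_{p,d}$ and the $S_p^d$-tensor appearing in c); this is where the switch from $\|\cdot\|_{cb}$ to $\|\cdot\|_d$ originates, and it must be tracked carefully when invoking the factorization formulas (\ref{norm p< q})-(\ref{norm p>q}) and Remark \ref{(p,1)}. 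The final optimality statement $\pi_{p,d}(T)=\inf C$ then follows by chasing constants in the four implications.
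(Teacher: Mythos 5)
Your proposal is correct and matches the paper's own treatment: the paper proves this theorem precisely by declaring that one can ``follow the proof of Theorem \ref{factorization-positivity} word by word,'' replacing $S_p$ by $\ell_p(S_p^d)$ and $\|\cdot\|_{cb}$ by $\|\cdot\|_d$, which is exactly the adaptation you carry out (same Hahn--Banach separation over $S=\bigcup_i B^+_{S_{2p}^{k_i}}$, same H\"older collapse $(a_i,b_i)\rightsquigarrow a_i$, same passage from positive to general $x$ via Proposition \ref{CS CP} and the attained decomposition $x=y^*z$). Your closing observation about where the outer $\ell_p^N$ enters (only through the finite sums in the Hahn--Banach cone, while the domination in c) is stated at the single $S_p^d$-level) is precisely the bookkeeping the paper leaves implicit.
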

In particular, in order to compute the $\pi_{p,d}$ norm of a completely positive map $T$ between finite dimensional von Neumann algebras, it suffices to consider positive elements in the definition of the norm. We will consider the following point of view.
\begin{remark}\label{positive embedding}
Given $M_n$, for every natural number $1\leq d\leq n$ we consider the unit ball of the space of completely bounded maps from $M_n$ to $M_d$, $\mathcal K=B_{CB(M_n, M_d)}$. Then, we can define the map $$j_d:M_n\rightarrow \ell_\infty(\mathcal K, M_d)$$ given by $$j_d(A)=\big(\phi(A)\big)_{\phi\in \mathcal K} \text{   } \text{   for every    }  \text{   }  A\in M_n.$$
It is not difficult to see that the previous map $j_d$ defines a $d$-isometry. That is, the map $$id_d\otimes j_d:M_d\otimes_{min} M_n\rightarrow M_d\otimes_{min}\ell_\infty(\mathcal K, M_d)$$is an isometry. Therefore, it follows from the very definition of the $\pi_{p,d}$ norm that for a map $\mathcal J:M_n\rightarrow M_n$ we have
\begin{align}\label{d-isometry}
\pi_{p,d}(\mathcal J)=\pi_{p,d}(j_d\circ \mathcal J)=\pi_p^o(j_d\circ \mathcal J),
\end{align}where the last equality comes from the Pietch factorization theorem and the fact that a map $T:X\rightarrow \ell_\infty(\mathcal K, M_d)$ verifies that $\|T\|_{cb}=\|T\|_d$.

When $\mathcal T:M_n\rightarrow M_n$ is completely positive we can actually consider the set
\begin{align}\label{set P}
\mathcal P=CPU(M_n, M_d):=\Big\{T:M_n\rightarrow M_d, T \text{   } \text{  }  \text{is completely positive and unital}\Big \}
\end{align}to obtain an equation similar to (\ref{d-isometry}). Indeed, if we consider the map $j_d:M_n\rightarrow \ell_\infty(\mathcal P, M_d)$, where $j_d$ is defined as above,$$j_d(A)=\big(\phi(A)\big)_{\phi\in \mathcal P} \text{   } \text{   for every    }  \text{   }  A\in M_n,$$
one can easily check that this map is a $d$-isometry on positive elements (that is, the map $id_d\otimes j_d$ is an isometry when acting on positive elements in $M_d\otimes_{min} M_n$). Let us briefly explain this point. Given any positive element $A\in M_{dn}$, we must show that there exists a completely positive map $T:M_n\rightarrow M_d$ such that $\|A\|=\|(id_d\otimes T)(A)\|_{M_{d^2}}$. Now, since $A$ is positive, there must exist a unit element $\xi\in \ell_2^d\otimes \ell_2^n$ such that $\|A\|=\langle \xi, A\xi\rangle$. On the other hand, since $\xi$ has rank $d$, we can find a projection $P:\ell_2^n\rightarrow \ell_2^d$ so that
$\langle \xi, A\xi\rangle=\langle \eta, (id_{\ell_2^d}\otimes P)A(id_{\ell_2^d}\otimes P^*)\eta\rangle$ for a certain $\eta=(id_{\ell_2^d}\otimes P)\xi\in \ell_2^d\otimes \ell_2^d$. Therefore, the completely positive and unital map $T:M_n\rightarrow M_d$ defined by $x\mapsto PxP^*$ verifies what we want.

By the nice behavior of the $\ell_p(S_p^d)$-summing maps explained in Theorem \ref{factorization-positivity d-version} one can easily deduce that
\begin{align*}
\pi_{p,d}(\mathcal T)=\pi_{p,d}(j_d\circ \mathcal T)=\pi_p^o(j_d\circ \mathcal T).
\end{align*}
\end{remark}
We will need to give a more general definition of a quantum channel in order to consider also the case of infinite dimensional von Neumann algebras. Let $\mathcal H_1$ and $\mathcal H_2$ be two complex Hilbert spaces and let us denote by $\mathcal B(\mathcal H_i)$ the von Neumann algebra of all bounded operators from $\mathcal H_i$ to $\mathcal H_i$ for $i=1,2$. Let us also denote $S_1(\mathcal H_i)$ the space of trace class operators from $\mathcal H_i$ to $\mathcal H_i$ for $i=1,2$. We can define a quantum channel as a completely positive and trace preserving map $\mathcal N:S_1(\mathcal H_1)\rightarrow S_1(\mathcal H_2)$. In this case, we say that we are describing the channel in the \emph{Schr\"odinger picture}. On the other hand, for a given quantum channel we can consider the dual map to obtain a completely positive map $\mathcal \Ne^*:\mathcal B(\mathcal H_2)\rightarrow \mathcal B(\mathcal H_1)$ which turns out to be unital. In this case, we say that we are working with the \emph{Heisenberg picture} of the channel\footnote{One could define a quantum channel as a completely positive and unit map (Heisenberg picture) and, then, consider the dual map of it to define the Schr\"odinger picture. This gives a more general definition of a quantum channel, where the states are just (normalized) positive functionals on our von Neumann algebra. However, these considerations will not be relevant in this work.}. Although in this work we are interested in quantum channels defined on finite dimensional von Neumann algebras, in order to study their capacities we will need to consider certain channels defined on the direct sum of infinitely many copies of finite dimensional matrix algebras $\bigoplus_{i=1}^\infty M_{n_i}$. More precisely, we will consider channels of the form $\Ne:\ell_1(I,S_1^d)\rightarrow S_1^n$ so that the adjoint is defined on $\Ne^*:M_n\rightarrow \ell_\infty(I,M_d)$. Here, $I$ denotes an arbitrary index set. Since most of our results can be stated for general von Neumann algebras\footnote{To avoid technical issues, we will restrict to hyperfinite von Neumann algebras, where all our elements are perfectly well defined.} $\mathcal A$ and $\mathcal B$, we will denote a general quantum channel by $\Ne:L_1(\A)\rightarrow L_1(\B)$ or $\Ne^*:\B\rightarrow \A$.

The following definition will be crucial in the rest of the work.
\begin{definition}
Given two hyperfinite von Neumann algebras $\A$, $\B$ and a quantum channel $\Ne:L_1(\A)\rightarrow L_1(\B)$, we define
\begin{align*}
\tilde {C}(\Ne)=\lim_{q\rightarrow \infty}q\ln \big(\Pi_q^o(\Ne^*)\big).
\end{align*}
\end{definition}
The following two lemmas tell us about the soundness of the previous definition.
\begin{lemma}\label{decreasing}
Let $\A$ and $\B$ be two hyperfinite von Neumann algebras and let $T:\A\rightarrow \B$ be a linear map such that $\|T\|_{cb}=1$. The function $f_T:[1,\infty)\rightarrow \R$ defined by $f_T(q)=q\ln \pi_q^o(T)$ is non-negative and non-increasing. In particular, $$\lim_{q\rightarrow \infty}q\ln \pi_q^o(\Ne^*)=\inf_qq\ln \pi_q^o(\Ne^*)\leq \pi_1^o(\Ne^*)< \infty$$ is well defined for every quantum channel $\Ne:\ell_1(I,S_1^d)\rightarrow S_1^n$.
\end{lemma}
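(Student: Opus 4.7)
The proof splits into non-negativity, monotonicity, and the finiteness/existence assertions.

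\emph{Non-negativity.} I would use the standard comparison $\pi_q^o(T)\ge \|T\|_{cb}$, which holds because the formal identity $S_q[F]\to S_q\otimes_{min}F$ is completely contractive for every $1\le q\le \infty$: at $q=\infty$ by definition, at $q=1$ because the projective operator tensor norm $S_1[F]=S_1\widehat{\otimes} F$ dominates the minimal one, and by complex interpolation at intermediate $q$. Together with the standard identity $\|id_{S_q}\otimes T:S_q\otimes_{min}E\to S_q\otimes_{min}F\|=\|T\|_{cb}$, this yields
\[
\pi_q^o(T)=\big\|id_{S_q}\otimes T:S_q\otimes_{min}E\to S_q[F]\big\|\;\ge\; \|T\|_{cb}=1,
\]
and hence $f_T(q)=q\ln\pi_q^o(T)\ge 0$.

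\emph{Monotonicity.} The claim reduces to the log-convexity bound
\[
\pi_{q_1}^o(T)\;\le\;\pi_{q_0}^o(T)^{q_0/q_1}\,\|T\|_{cb}^{\,1-q_0/q_1},\qquad 1\le q_0<q_1\le\infty,
\]
since under the hypothesis $\|T\|_{cb}=1$ it gives $\pi_{q_1}^o(T)\le\pi_{q_0}^o(T)^{q_0/q_1}$, equivalently $q_1\ln\pi_{q_1}^o(T)\le q_0\ln\pi_{q_0}^o(T)$. I would derive this log-convexity by complex interpolation: the scale $S_q[F]=[S_\infty[F],S_1[F]]_{1/q}$ induces an interpolation scale for the completely $p$-summing ideals with endpoint $\pi_\infty^o=\|\cdot\|_{cb}$, and the multiplicative interpolation norm estimate is precisely the displayed bound (cf.~\cite{Pisierbook2}). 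A hands-on alternative is to start from the Pietsch factorization at $q_0$, $T=\alpha_0\circ M_{a_0,b_0}$ (\cite[Thm.~5.1]{Pisierbook2}, or Theorem~\ref{factorization-positivity} when $T$ is completely positive), and apply Stein's three-lines theorem to the analytic family $z\mapsto \alpha_0\circ M_{a_0^{1-(1-q_0/q_1)z},\,b_0^{1-(1-q_0/q_1)z}}$ on the strip $\{0\le\re(z)\le 1\}$; at $z=0$ one recovers the $q_0$-factorization of norm $\pi_{q_0}^o(T)$ and at $z=1$ the trivial $cb$-factorization of norm $\|T\|_{cb}$, and Hadamard's lemma produces the geometric mean on the intermediate strip. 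The main technical hurdle is exactly this step: a naive factorization using $a_1=a_0^{q_0/q_1}$ yields only the weaker $\pi_{q_1}^o(T)\le\pi_{q_0}^o(T)$ and does not exploit $\|T\|_{cb}=1$, so one has to interpolate genuinely against the endpoint $q=\infty$ in order to extract the exponent $q_0/q_1$.

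\emph{Finiteness and existence of the limit.} For a channel $\Ne:\ell_1(I,S_1^d)\to S_1^n$, the dual $\Ne^*:M_n\to\ell_\infty(I,M_d)$ is unital and completely positive, so $\|\Ne^*\|_{cb}=1$. The right-ideal property of $\pi_1^o$ together with the decomposition $\Ne^*=\Ne^*\circ id_{M_n}$ gives $\pi_1^o(\Ne^*)\le\|\Ne^*\|_{cb}\,\pi_1^o(id_{M_n})$, and an elementary Pietsch factorization of $id_{M_n}$ using $a=n^{-1/2}\uno_n\in M_n$ shows $\pi_1^o(id_{M_n})<\infty$; hence $\pi_1^o(\Ne^*)<\infty$. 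Combining this with the previous two items, $f_{\Ne^*}$ is non-negative, non-increasing and finite on $[1,\infty)$, so $\lim_{q\to\infty}f_{\Ne^*}(q)$ exists and coincides with $\inf_q f_{\Ne^*}(q)$; finally $\inf_q f_{\Ne^*}(q)\le f_{\Ne^*}(1)=\ln\pi_1^o(\Ne^*)\le\pi_1^o(\Ne^*)$, where the last inequality uses $\ln x\le x$ for $x\ge 1$.
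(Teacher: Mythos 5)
Your overall strategy coincides with the paper's proof. Non-negativity is obtained from $\pi_q^o(T)\ge\|T\|_{cb}=1$; monotonicity is reduced to the multiplicative interpolation bound $\pi_{q_1}^o(T)\le \pi_{q_0}^o(T)^{q_0/q_1}\,\pi_{\infty}^o(T)^{1-q_0/q_1}$ with the endpoint $\pi_\infty^o(T)=\|T\|_{cb}=1$, which is exactly the paper's inequality $\pi_{q_2}^o(T)\le\pi_{q_1}^o(T)^{\alpha}\pi_\infty^o(T)^{1-\alpha}$ proved by interpolating the scales $S_q\otimes_{min}\A$ and $S_q[\A]$; and your remark that the naive power substitution in the Pietsch factorization only yields $\pi_{q_1}^o\le\pi_{q_0}^o$, so that one must genuinely interpolate against $q=\infty$, identifies precisely the crux of the paper's argument. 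Your finiteness argument (ideal property plus $\pi_1^o(id_{M_n})\le n$ via the factorization $id_{M_n}=(n\iota)\circ M_{a,a}$ with $a=n^{-1/2}\uno_n$) is a perfectly good, more explicit substitute for the paper's one-line appeal to the finite rank of $\Ne^*$.

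There is, however, one genuinely false step. The ``standard identity'' $\|id_{S_q}\otimes T:S_q\otimes_{min}E\to S_q\otimes_{min}F\|=\|T\|_{cb}$ does not hold for finite $q$: min-tensoring with a fixed operator space only gives the inequality $\le\|T\|_{cb}$, and equality requires the space to contain matrix algebras uniformly, which $S_q$ with $q<\infty$ does not. Concretely, $S_2$ is completely isometric to Pisier's operator Hilbert space $OH$, and from the formula $\|\sum_i e_i\otimes x_i\|_{OH\otimes_{min}E}=\|\sum_i \overline{x_i}\otimes x_i\|^{1/2}_{\overline{E}\otimes_{min}E}$ one checks that the transposition map $x\mapsto x^t$ on $M_m$ satisfies $\|id_{S_2}\otimes(\cdot)^t:S_2\otimes_{min}M_m\to S_2\otimes_{min}M_m\|=1$, whereas its cb-norm is $m$; so your chain of inequalities can lose the factor $\|T\|_{cb}$ entirely. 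The statement behind your intuition is Pisier's theorem quoted in Section 2 of the paper, $\|T\|_{cb}=\sup_d\big\|id_d\otimes T:S_q^d[E]\to S_q^d[F]\big\|$, with the \emph{vector-valued} norms on both sides; combined with your (correct) observation that $\|\cdot\|_{S_q^d\otimes_{min}E}\le\|\cdot\|_{S_q^d[E]}$, this immediately gives $\|T\|_{cb}\le\pi_q^o(T)$. Alternatively, the Pietsch factorization $T=\alpha\circ M_{a,b}$ together with H\"older's bound $\|M_{a,b}:\A\to L_q(\A)\|_{cb}\le\|a\|_{2q}\|b\|_{2q}$ yields the same inequality. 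A second, less important, defect: your ``hands-on alternative'' for monotonicity does not work as written, since the analytic family $z\mapsto\alpha_0\circ M_{a_0^{s(z)},b_0^{s(z)}}$ is not a factorization of $T$ at interior points of the strip, so the three-lines lemma gives no bound on $\pi_{q_1}^o(T)$; your primary interpolation route (the paper's) is the one to keep.
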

\begin{proof}
On the one hand, the non negativity follows from the fact that $\pi_q^o(T)\geq \|T\|_{cb}=1$ for every $q\geq 1$. In order to prove the second assertion let us consider $1\leq q_1\leq q_2< \infty$. Then, by using a standard interpolation argument one can show that for every $\alpha\in (0,1)$ such that $\frac{1}{q_2}=\frac{\alpha}{q_1}+\frac{1-\alpha}{\infty}$ we have $\pi_{q_2}^o(T)\leq \pi_{q_1}^o(T)^\alpha\pi_{\infty}^o(T)^{1-\alpha}= \pi_{q_1}^o(T)^\alpha.$ Indeed, this follows easily by just recalling that 
$$\pi_{q}^o(T)=\sup_k\big\|id_{S_p^k}\otimes T: S_q^k\otimes_{min}\mathcal A\rightarrow S_q^k[\mathcal A]\big\|$$for every $1\leq q\leq \infty$ and the fact that the spaces  $S_q^k\otimes_{min}\mathcal A$ and $S_q^k[\mathcal A]$ interpolates properly in $q$ (see \cite[Chapter 1]{Pisierbook2}). Therefore,
\begin{align*}
q_2\ln \pi_{q_2}^o(T)=\ln \big(\pi_{q_2}^o(T)^{q_2}\big)\leq \ln \big(\pi_{q_1}^o(T)^{q_2\alpha}\big)=\ln \big(\pi_{q_1}^o(T)^{q_1}\big)=q_1\ln \pi_{q_1}^o(T).
\end{align*}
The last assertion follows immediately by the statement of the lemma and the fact that $\Ne^*:M_n\rightarrow \ell_\infty(I,M_d)$ has finite rank.
\end{proof}
\begin{lemma}\label{quotient}
Let $f_T:[1,\infty)\rightarrow \R^+$ be a function such that $\lim_{q\rightarrow \infty}f(q)=1$. Then, $\lim_{q\rightarrow \infty}q\ln f(q)$ exists if and only if $\frac{d}{dp}[f(q)]|_{p=1}:=\lim_{p\rightarrow 1^+}\frac{f(q)-1}{p-1}$ exists and in this case the limits are the same. Here, $\frac{1}{p}+\frac{1}{q}=1$.

In particular, for any quantum channel $\Ne:\ell_1(I,S_1^d)\rightarrow S_1^n$ we have
\begin{align*}
\tilde {C}(\Ne)=\frac{d}{dp}[\pi_q^o(\Ne^*)]|_{p=1}.
\end{align*}
\end{lemma}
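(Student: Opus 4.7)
The plan is to reduce the equivalence to a routine Taylor expansion after unpacking the relation $\frac{1}{p}+\frac{1}{q}=1$. The key preparatory identity is obtained by multiplying that relation by $pq$: this yields $p+q=pq$, hence $(p-1)(q-1)=1$ and $p-1=\frac{1}{q-1}$. In particular, $p\to 1^+$ is exactly $q\to \infty$, and
\[
\frac{f(q)-1}{p-1}=(q-1)\bigl(f(q)-1\bigr).
\]
So everything reduces to comparing $q\ln f(q)$ with $(q-1)(f(q)-1)$ as $q\to\infty$.

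Next, I would invoke $\ln(1+x)=x+O(x^2)$ as $x\to 0$, which applies because $f(q)\to 1$ by hypothesis. Setting $x=f(q)-1$,
\[
q\ln f(q)=q\bigl(f(q)-1\bigr)+q\cdot O\bigl((f(q)-1)^2\bigr),
\]
while $(q-1)(f(q)-1)=q(f(q)-1)-(f(q)-1)$. Subtracting gives
\[
q\ln f(q)-\frac{f(q)-1}{p-1}=\bigl(f(q)-1\bigr)+q\cdot O\bigl((f(q)-1)^2\bigr).
\]
If either of the two limits exists in $\R$ and equals some $L$, then $q(f(q)-1)\to L$ as well (using $\ln f(q)\sim f(q)-1$ in the first case, and the identity above in the second). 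This in turn forces $f(q)-1=O(1/q)$, so both summands on the right tend to $0$, and hence the other limit exists and equals $L$. This proves the equivalence.

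For the final assertion I would apply the result to $f(q):=\pi_q^o(\Ne^*)$, which requires checking $f(q)\to 1$. Since $\Ne$ is a quantum channel, $\Ne^*$ is unital and completely positive, so $\|\Ne^*\|_{cb}=1$. The interpolation bound used in the proof of Lemma \ref{decreasing}, namely $\pi_{q_2}^o(\Ne^*)\leq \pi_{q_1}^o(\Ne^*)^{q_1/q_2}\|\Ne^*\|_{cb}^{1-q_1/q_2}$, combined with $\pi_1^o(\Ne^*)<\infty$ (the domain $M_n$ is finite-dimensional), forces $\pi_{q_2}^o(\Ne^*)\to 1$ as $q_2\to\infty$. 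The first part of the lemma then yields $\tilde C(\Ne)=\frac{d}{dp}[\pi_q^o(\Ne^*)]|_{p=1}$.

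I do not foresee any real obstacle: the entire argument is a one-variable calculus exercise once the conjugate-exponent identity $(p-1)(q-1)=1$ is in hand. The only point demanding a touch of care is ensuring the error term $q\cdot(f(q)-1)^2$ genuinely vanishes, which follows at once from the bound $f(q)-1=O(1/q)$ extracted in the comparison step.
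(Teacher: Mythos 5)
Your proof is correct and follows essentially the same route as the paper's: both rest on the conjugate-exponent identity $(p-1)(q-1)=1$, which turns the difference quotient into $(q-1)(f(q)-1)$, together with the equivalence $\ln f(q)\sim f(q)-1$ as $f(q)\to 1$; and both obtain $\pi_q^o(\Ne^*)\to 1$ from $\|\Ne^*\|_{cb}=1$ and the interpolation/monotonicity content of Lemma \ref{decreasing} (using that $\Ne^*$ has finite rank, so $\pi_1^o(\Ne^*)<\infty$). The only difference is presentational: you make explicit, via the Taylor error term $q\cdot O((f(q)-1)^2)$ and the bound $f(q)-1=O(1/q)$, the bookkeeping that the paper compresses into ``follows easily.''
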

\begin{proof}
Since $\lim_{q\rightarrow \infty}f(q)=1$, we have $\lim_{q\rightarrow \infty}\frac{\ln(f(q))}{f(q)-1}=1$. Then, the first assertion of the statement follows easily from the fact that $\frac{f(q)-1}{p-1}=(q-1)(f(q)-1)$ and, therefore, $\lim_{p\rightarrow 1^+}\frac{f(q)-1}{p-1}=\lim_{q\rightarrow \infty}(q-1)(f(q)-1)$. Here, we have used that $\frac{1}{p}+\frac{1}{q}=1$.

On the other hand, since $\Ne^*$ is a completely positive and unital map between von Neumann algebras, we know that $\|\Ne^*\|_{cb}=1$. Then, by denoting $f(q)=\pi_q^o(\Ne^*)$ we have that $\lim_{q\rightarrow \infty}f(q)=\lim_{q\rightarrow \infty}\pi_q^o(\Ne^*)= \|\Ne^*\|_{cb}=1$. Therefore, the second assertion of the lemma is a direct consequence of Lemma \ref{decreasing}.
\end{proof}
%
%
%
%
%
\section{Main Theorem: Restricted capacities via $\ell_p(S_p^d)$-summing maps}\label{main results section}
In this section we will prove Theorem \ref{mainI}. In fact, this result will follow from a more general theorem:
\begin{theorem}\label{mainII}
Let $\big(\Ne_i:S_1^d\rightarrow S_1^n\big)_{i\in I}$ be a family of quantum channels indexed in a set $I$ and let $d$ be any natural number verifying $1\leq d\leq n$. Let us define the quantum channel $$\Ne:\ell_1(I,S_1^d)\rightarrow S_1^n$$ by linearity with $$\Ne(e_i\otimes \rho_i)=\Ne_i(\rho_i)  \text{   }  \text{    for every     } \text{   } i\in I.$$ Then,
\begin{align*}
\tilde{C}(\Ne)=C_E((\Ne_i)_i):=\sup\Big\{S\Big(\sum_{i=1}^N \lambda_i(tr_d\otimes id_n)\big((id_d\otimes \Ne_i)(\eta_i)\big)\Big)\\ +\sum_{i=1}^N \lambda_i\Big[S\Big((id_d\otimes tr_n)\big((id_d\otimes \Ne_i)(\eta_i)\big)\Big)-S\Big((id_d\otimes \Ne_i)(\eta_i)\Big) \Big] \Big\},
\end{align*} where the supremum runs over all $N\in \N$, all probability distributions $(\lambda_i)_{i=1}^N$ and all pure states $\eta_i\in S_1^d\otimes S_1^d$.
\end{theorem}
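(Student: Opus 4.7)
The plan is to use Corollary~\ref{cor-Theorm-fact} to rewrite $\pi_q^o(\Ne^*)$ as an explicit supremum over simple positive elements, then apply Lemma~\ref{quotient} so that $\tilde C(\Ne)$ becomes a derivative at $p=1$, and finally invoke the standard identity $\frac{d}{dp}\ln \text{tr}(\rho^p)|_{p=1}=-S(\rho)$ to convert $p$-norm contributions into entropies.

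By Lemma~\ref{quotient} the theorem is equivalent to $\frac{d}{dp}[\pi_q^o(\Ne^*)]|_{p=1}=C_E((\Ne_i)_i)$. Since $\Ne^*:M_n\to\ell_\infty(I,M_d)$ is completely positive, Corollary~\ref{cor-Theorm-fact} (extended to infinite $I$ via Remark~\ref{General Proposition 2.1}) gives
\begin{align*}
\pi_q^o(\Ne^*)=\big\|\text{flip}\circ(id_{\ell_p(S_p^d)}\otimes\Ne):\ell_p(I,S_p^d)\big[\ell_1(I,S_1^d)\big]\to S_1^n\big[\ell_p(I,S_p^d)\big]\big\|_+,
\end{align*}
and moreover the supremum is attained on elements of the very specific form $\rho=\sum_i\lambda_i\,e_i\otimes e_i\otimes M_{a_i}$, with $(\lambda_i)_i$ a probability distribution and $a_i\in S_{2p}^d$ positive of norm $\|a_i\|_{2p}=1$.

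The heart of the argument is to unpack this norm on $\rho$. Using $\Ne(e_i\otimes\cdot)=\Ne_i$, one gets $(id\otimes\Ne)(\rho)=\sum_i\lambda_i\,e_i\otimes(id_d\otimes\Ne_i)(\eta_i)$, where $\eta_i\in S_1^d\otimes S_1^d$ is the pure state corresponding to $M_{a_i}$ by the Choi-Jamio\l kowski identification, namely $\eta_i=(\uno\otimes a_i)|\Omega\rangle\langle\Omega|(\uno\otimes a_i)$ with $|\Omega\rangle=\sum_k|k\rangle|k\rangle$. Setting $\sigma_i:=(id_d\otimes\Ne_i)(\eta_i)$, its Alice and Bob marginals are $\sigma_i^A:=(id_d\otimes tr_n)(\sigma_i)=a_i^2$ and $\sigma_i^B:=(tr_d\otimes id_n)(\sigma_i)$; the constraint $\|a_i\|_{2p}=1$ then reads $\|\sigma_i^A\|_p=1$, which at $p=1$ says $\sigma_i^A$ is a state. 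Applying formulas (\ref{norm p< q})--(\ref{norm p>q}) together with Remark~\ref{(p,1)} to evaluate the outer $S_1^n[\ell_p(S_p^d)]$-norm after the flip, one can express $\pi_q^o(\Ne^*)^q$ as a supremum combining three $p$-norm contributions: an outer pairing producing a $p$-norm of the averaged Bob marginal $\sum_i\lambda_i\sigma_i^B$, the constraint $\|\sigma_i^A\|_p=1$ forcing a rescaling, and an inner contribution $\|\sigma_i\|_p$.

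Taking $q\ln(\cdot)$ and sending $q\to\infty$, each $p$-norm is converted into an entropy via $\frac{d}{dp}\ln\text{tr}(\rho^p)|_{p=1}=-S(\rho)$. Careful sign tracking gives $+S(\sum_i\lambda_i\sigma_i^B)$ from the outer norm, $+\sum_i\lambda_i S(\sigma_i^A)$ from the rescaling forced by $\|\sigma_i^A\|_p=1$ (the opposite sign arises because it appears as a divisor once one normalizes back to an honest state), and $-\sum_i\lambda_i S(\sigma_i)$ from the inner norm; these sum to exactly $C_E((\Ne_i)_i)$. The main obstacle will be precisely this norm-unpacking step: showing that the $S_1^n[\ell_p(S_p^d)]$-norm of $(id\otimes\Ne)(\rho)$ after the flip factorizes into the explicit combination of $p$-norms described above, and tracking the correct signs of the three entropy contributions through this factorization. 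A secondary technical matter is the interchange of $\sup$ with $\lim_{q\to\infty}$, which will follow from the uniform bounds of Lemma~\ref{decreasing} together with continuity in $p$ of the admissible set $\{(\lambda_i,a_i):\|a_i\|_{2p}=1\}$, letting us optimize at $p=1$ and obtain matching upper and lower bounds for the derivative.
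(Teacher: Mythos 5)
Your reduction via Lemma \ref{quotient} and the use of Corollary \ref{cor-Theorm-fact} to restrict to positive norming elements of the form $\rho=\sum_i\lambda_i e_i\otimes e_i\otimes M_{a_i}$ matches the paper's starting point, and your outline is roughly adequate for \emph{one} of the two inequalities, namely $\tilde C(\Ne)\le C_E((\Ne_i)_i)$: there the paper also differentiates the three normalized pieces of the quotient, but it obtains the outer entropy term only through the one-sided inequality of Lemma \ref{GenralLieb} (that is, $\|x\|_{S_1^n[S_p^m]}\le\big\|\big((id_n\otimes tr_m)(x^p)\big)^{1/p}\big\|_{S_1^n}$ for $x\ge 0$), supplemented by the continuity of the entropy (Theorem \ref{cont-Entropy}) and by a choice of $\delta$ \emph{independent of} $p$, which is needed because the number $N$ of blocks in the norming element depends on $p$; your appeal to ``continuity in $p$ of the admissible set'' does not address this dependence.

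The genuine gap is in the step you yourself flag as the main obstacle: the claim that the $S_1^n[\ell_p(S_p^d)]$-norm of $\text{flip}\circ(id\otimes\Ne)(\rho)$ \emph{factorizes} into an explicit combination of $p$-norms. No such identity holds: by (\ref{norm p< q}) this vector-valued norm is an \emph{infimum} over factorizations $X=(A\otimes\uno)Y(B\otimes\uno)$, and Lemma \ref{GenralLieb} only bounds it from \emph{above}. Consequently your scheme can only yield $\tilde C(\Ne)\le C_E((\Ne_i)_i)$. For the converse inequality you would need a \emph{lower} bound on this infimum when testing against the state built from an optimal ensemble, and nothing in your outline provides one. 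The paper's proof of $\tilde C(\Ne)\ge C_E((\Ne_i)_i)$ runs along an entirely different track: it invokes the factorization theorem (Theorem \ref{factorization-positivity}) to write $\Ne^*=T_qM_{a(q)}$ with the normalization $\|a(q)\|_{2q}^{2}=\pi_q^o(\Ne^*)^{1/q}$ and $\|T_q\|_{cb}=\pi_q^o(\Ne^*)^{1/p}$, extracts limits $T_q\to T$ and $a(q)\to a$ by compactness, applies Lemma \ref{aequalone} to conclude $a=\uno$ and $T=\Ne^*$ (so the limiting factorization is trivial and $\sup_q\|a(q)^{-1}\|\le M$ along a subnet), then lower-bounds $\|T_q^*\|_{cb}$ by its action on the ensemble state measured in $\ell_p^N(S_p^d)[S_p^n]=\ell_p^N(S_p^{dn})$ --- where, precisely because the inner and outer exponents match, the vector-valued norm collapses to an honest Schatten norm that can be differentiated --- and finally uses Lemma \ref{limitetrace} applied to the net $(a(p)^2)_p$ to recover the output entropy $S\big(\sum_i\lambda_i\Ne_i((tr_d\otimes id_d)(\eta_i))\big)$ from the term $tr\big(a(p)^{-2}\Ne(\cdots)\big)$. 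Without this factorization-and-limit machinery (or some substitute, e.g.\ via the trace duality (\ref{trace duality positive})), the inequality $\ge$ is unproven, so the proposal as written establishes only half of the theorem.
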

\begin{remark}
Using that $\Ne_i$ is a quantum channel for every $i\in I$ it is trivial to see that the right hand side term in Theorem \ref{mainII} can be written as
\begin{align}\label{C_E good}
\sup\Big\{S\Big(\sum_{i=1}^N \lambda_i\Ne_i\big((tr_d\otimes id_d)(\eta_i)\big)\Big) +\sum_{i=1}^N \lambda_i\Big[S\Big((id_d\otimes tr_d)(\eta_i)\Big)-S\Big((id_d\otimes \Ne_i)(\eta_i)\Big)  \Big]\Big\},
\end{align}where the supremum runs over all $N\in \N$, all probability distributions $(\lambda_i)_{i=1}^N$ and all pure states $\eta_i\in S_1^d\otimes S_1^d$.
\end{remark}
We will first show how to obtain Theorem \ref{mainI} from Theorem \ref{mainII}.

Let $\Ne:S_1^n\rightarrow S_1^n$ be a quantum channel. Then, we define the new quantum channel
\begin{align*}
\hat{\Ne}:\ell_1(\mathcal P, S_1^d)\rightarrow S_1^n,
\end{align*}given by $\hat{\Ne}\big((\rho_\sigma)_{\sigma\in \mathcal P}\big)=\sum_{\sigma\in \mathcal P}\Ne\big(\sigma^*(\rho_\sigma)\big)$, where $\mathcal P$ is defined as in (\ref{set P}). That is, $\hat{\Ne}$ is defined by the family of channels $\big(\Ne \circ \sigma^*:S_1^d\rightarrow S_1^n\big)_{\sigma\in \mathcal P}.$ On the other hand, it is very easy to check that $\hat{\Ne}^*=j_d\circ \Ne^*:M_n\rightarrow \ell_\infty(\mathcal P, M_d)$. According to Remark \ref{positive embedding}, we conclude that $\pi_{q,d}(\mathcal \Ne^*)=\pi_{q,d}(\hat{\Ne}^*)=\pi_q^o(\hat{\Ne}^*).$ Therefore, Theorem \ref{mainII} tells us that
\begin{align*}
\lim_{q\rightarrow \infty}q\ln \pi_{q,d}(\Ne^*)=\lim_{q\rightarrow \infty}q\ln \pi_{q}^o(\hat{\Ne}^*)=\sup\Big\{S\Big(\sum_{i=1}^N \lambda_i(\Ne\circ \phi_i)\big((tr_d\otimes id_d)(\eta_i)\big)\Big)
\\+\sum_{i=1}^N \lambda_i\Big[S\Big((id_d\otimes tr_d)(\eta_i)\Big)-S\Big(\big(id_d\otimes (\Ne\circ \phi_i)\big)(\eta_i)\Big) \Big]\Big\}.
\end{align*}Here, the supremum runs over all $N\in \N$, all probability distributions $(\lambda_i)_{i=1}^N$, all pure states $\eta_i\in S_1^d\otimes S_1^d$ and all quantum channels $\phi_i:S_1^d\rightarrow S_1^n$. So we obtain Theorem \ref{mainI}. Note that we have used that $\sigma:M_n\rightarrow M_d$ is completely positive and unital if and only if $\phi=\sigma^*:S_1^d\rightarrow S_1^n$ is completely positive and trace preserving.

The rest of the section will be devoted to proving Theorem \ref{mainII}. For the first inequality, $\tilde{C}(\Ne)\leq C_E((\Ne_i)_i)$, we will need the following well known lemma.
\begin{lemma}[\cite{KiKo}]\label{GenralLieb}
For every positive element in $M_n\otimes M_m$ we have that
\begin{align*}
\|x\|_{S_1^n[S_p^m]}\leq \Big\|\big((id_n\otimes tr_m)(x^p)\big)^\frac{1}{p}\Big\|_{S_1^n}.
\end{align*}
\end{lemma}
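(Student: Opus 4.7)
My plan is to use the factorization formula (\ref{norm p< q}) for $S_1^n[S_p^m]$ (outer index $1$, inner index $p$, so $2r = 2p'$ with $1/p+1/p'=1$), and exhibit an explicit factorization $x = (A \otimes \uno_m)\, Y\, (A \otimes \uno_m)$ whose product of norms collapses to $\|\rho^{1/p}\|_{S_1^n}$, where $\rho := (id_n \otimes tr_m)(x^p) \in M_n^+$. Assuming $\rho$ is invertible (otherwise I would restrict everything to the support projection of $\rho$, on which $x$ is effectively supported by a short positivity argument, or equivalently regularise $\rho \rightsquigarrow \rho + \varepsilon \uno_n$ and let $\varepsilon \to 0^+$), I would take
$$A := \rho^{1/(2pp')}, \qquad Y := (A^{-1} \otimes \uno_m)\, x\, (A^{-1} \otimes \uno_m),$$
so that $x = (A \otimes \uno_m)\, Y\, (A \otimes \uno_m)$, and a direct computation gives $\|A\|_{S_{2p'}^n}^2 = tr_n(\rho^{1/p})^{1/p'}$.

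The heart of the argument is then to show $\|Y\|_{S_p^{nm}}^p \leq tr_n(\rho^{1/p})$. For this I would invoke the Araki--Lieb--Thirring inequality: for positive operators $B, C$ and $p \geq 1$,
$$tr\big((B^{1/2} C B^{1/2})^p\big) \leq tr(B^p C^p).$$
Applying it with $B = A^{-2} \otimes \uno_m = \rho^{-1/(pp')} \otimes \uno_m$ and $C = x$ yields
$$\|Y\|_{S_p^{nm}}^p \leq tr\big((\rho^{-1/p'} \otimes \uno_m)\, x^p\big) = tr_n\big(\rho^{-1/p'}\, (id_n \otimes tr_m)(x^p)\big) = tr_n(\rho^{1/p}).$$
Combining the two bounds via the factorisation formula then gives
$$\|x\|_{S_1^n[S_p^m]} \leq \|A\|_{S_{2p'}^n}^2\, \|Y\|_{S_p^{nm}} \leq tr_n(\rho^{1/p})^{1/p' + 1/p} = tr_n(\rho^{1/p}) = \|\rho^{1/p}\|_{S_1^n}.$$

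The main obstacle I expect is the Araki--Lieb--Thirring step: this is the essential noncommutative ingredient that lets one convert the twisted product $(A^{-1} \otimes \uno_m)\, x\, (A^{-1} \otimes \uno_m)$ into a partial-trace expression in $\rho$ and $x^p$. The exponent $1/(2pp')$ in the choice of $A$ is then \emph{forced} by the requirement that $\|A\|_{S_{2p'}^n}^2 \cdot \|Y\|_{S_p^{nm}}$ collapse to a single power of $tr_n(\rho^{1/p})$, with the exponents on the two factors being precisely $1/p'$ and $1/p$; everything else is bookkeeping of Schatten indices together with the standard support/perturbation argument in the non-invertible case.
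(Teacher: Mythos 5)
Your proof is correct, but it cannot be compared step-by-step with ``the paper's own proof'' for a simple reason: the paper gives no proof of Lemma \ref{GenralLieb} at all, quoting it directly from the reference \cite{KiKo} of King and Koldan. Your argument therefore supplies a self-contained proof built from ingredients the paper already has on the table, and it checks out. First, plugging the factorization $x=(A\otimes\uno_{M_m})Y(A\otimes\uno_{M_m})$ with $A=\rho^{1/(2pp')}$ into the easy ($\leq$) direction of formula (\ref{norm p< q}) (outer index $1$, inner index $p$, hence $2r=2p'$) gives $\|x\|_{S_1^n[S_p^m]}\leq\|A\|_{S_{2p'}^n}^2\|Y\|_{S_p^{nm}}$, and indeed $\|A\|_{S_{2p'}^n}^2=\big(tr_n(\rho^{1/p})\big)^{1/p'}$ because $A^{2p'}=\rho^{1/p}$. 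Second, Araki--Lieb--Thirring with $B=\rho^{-1/(pp')}\otimes\uno_{M_m}$ and $C=x$ gives $tr(Y^p)=tr\big((B^{1/2}CB^{1/2})^p\big)\leq tr(B^pC^p)=tr\big((\rho^{-1/p'}\otimes\uno_{M_m})x^p\big)=tr_n\big(\rho^{-1/p'}(id_n\otimes tr_m)(x^p)\big)=tr_n(\rho^{1/p})$, and the exponents $1/p'+1/p=1$ collapse the product to $\|\rho^{1/p}\|_{S_1^n}$ exactly as you claim. Your handling of singular $\rho$ is also sound: $((\uno_n-q)\otimes\uno_{M_m})x^p((\uno_n-q)\otimes\uno_{M_m})$ is positive with vanishing partial trace, hence zero, so $x$ is supported under $q\otimes\uno_{M_m}$ where $q$ is the support projection of $\rho$; alternatively your regularization works because $\rho$ commutes with $\rho+\varepsilon\uno_n$, whence $tr_n\big((\rho+\varepsilon\uno_n)^{-1/p'}\rho\big)\leq tr_n\big((\rho+\varepsilon\uno_n)^{1/p}\big)\rightarrow tr_n(\rho^{1/p})$ as $\varepsilon\rightarrow 0^+$. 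What your route buys is that the lemma becomes a short consequence of Pisier's factorization formula plus one classical trace inequality, rather than an appeal to an external theorem; the Lieb--Thirring mechanism is also the engine behind the cited result of King and Koldan, so conceptually you have reconstructed the missing proof rather than replaced it with a genuinely different one.
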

We will also use the following two well known results about the von Neumann entropy. The first one is about its continuity and the second one relates the von Neumann entropy of a state with its $p$-norm.
\begin{theorem}\cite[Theorem 1]{Aud1}\label{cont-Entropy}
For all $n$-dimensional states $\rho$, $\sigma$ we have
\begin{align*}
|S(\rho)-S(\sigma)|\leq T\ln (n-1)+ H((T,1-T)),
\end{align*}where $T=\frac{\|\rho-\sigma\|_1}{2}$ and $H$ denotes the Shannon entropy.

In particular, given $\epsilon> 0$ there exists a $\gamma=\gamma(\epsilon,n)> 0$ such that for every positive operators $\rho$ and $\sigma$ in $M_n$ so that $\|\rho-\sigma\|_1\leq\gamma$, $\big|\|\rho\|_1-1\big|< \gamma$ and $\big|\|\sigma\|_1-1\big|< \gamma$ we have $$|S(\rho)-S(\sigma)|\leq \epsilon.$$
\end{theorem}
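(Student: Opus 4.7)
The plan is to reduce the matrix inequality to its commutative analogue via the spectral theorem and Mirsky's inequality, and then identify the extremizers in the classical case. Writing $r=(r_1\geq\cdots\geq r_n)$ and $s=(s_1\geq\cdots\geq s_n)$ for the decreasing rearrangements of the eigenvalues of $\rho$ and $\sigma$, both of which are probability vectors, one has $S(\rho)=H(r)$ and $S(\sigma)=H(s)$ since the von Neumann entropy depends only on the spectrum. The Mirsky/Lidskii--Wielandt inequality for Hermitian operators gives $\sum_i |r_i-s_i| \leq \|\rho-\sigma\|_1$, so $T':=\tfrac{1}{2}\|r-s\|_{\ell_1}\leq T$. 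Since the right-hand side $T\ln(n-1)+H((T,1-T))$ is nondecreasing in $T$ on $[0,1-1/n]$, it suffices to prove the classical bound $|H(p)-H(q)|\leq T\ln(n-1)+H((T,1-T))$ for probability vectors $p,q$ on $\{1,\dots,n\}$ with $\tfrac{1}{2}\|p-q\|_{\ell_1}=T$.

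For the classical statement, compactness of the constraint set guarantees that the supremum of $H(q)-H(p)$ is attained, and I would claim that the extremum is achieved by the pair $p=(1,0,\dots,0)$ and $q=(1-T,\tfrac{T}{n-1},\dots,\tfrac{T}{n-1})$, for which a direct computation yields $H(q)-H(p)=T\ln(n-1)+H((T,1-T))$ exactly. Verifying this extremizer claim is the main obstacle, and it is precisely what makes this sharp form harder than the earlier Fannes bound. I would attack it by two complementary routes: a KKT/Lagrange-multiplier analysis showing that at any interior maximizer the first-order conditions force $p$ and $q$ to be constant on certain index sets (so only ``two-level'' distributions can be extremal), together with a symmetrization / Schur-concavity argument that further reduces both $p$ and $q$ to the stated canonical shape. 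The symmetric bound on $H(p)-H(q)$ follows by swapping the roles of $\rho$ and $\sigma$.

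The ``In particular'' clause then follows by a continuity-and-normalization argument. Given positive $\rho,\sigma\in M_n$ with $|\|\rho\|_1-1|<\gamma$, $|\|\sigma\|_1-1|<\gamma$ and $\|\rho-\sigma\|_1\leq\gamma$, renormalize to density operators $\tilde\rho=\rho/\|\rho\|_1$ and $\tilde\sigma=\sigma/\|\sigma\|_1$. An elementary estimate gives $\|\tilde\rho-\tilde\sigma\|_1\leq C\gamma$, while the usual continuity bound on $\eta(t)=-t\ln t$ applied to the eigenvalue sequences of $\rho$ and $\tilde\rho$ (and of $\sigma$ and $\tilde\sigma$) controls $|S(\rho)-S(\tilde\rho)|$ and $|S(\sigma)-S(\tilde\sigma)|$ by $C'\gamma\ln(1/\gamma)$ for $\gamma$ small. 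Applying the main inequality to $\tilde\rho,\tilde\sigma$ and using that for fixed $n$ the function $T\mapsto T\ln(n-1)+H((T,1-T))$ tends to $0$ as $T\to 0^+$, one can choose $\gamma=\gamma(\epsilon,n)>0$ small enough that the total bound on $|S(\rho)-S(\sigma)|$ is less than $\epsilon$.
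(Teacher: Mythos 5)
Note first what the paper actually proves here: the main inequality is \emph{cited} from Audenaert \cite[Theorem 1]{Aud1} and is not proved in the paper at all. The only argument the paper supplies is for the ``In particular'' clause, via the triangle inequality
\begin{align*}
|S(\rho)-S(\sigma)|\leq \Big|S(\rho)-S\big(\tfrac{\rho}{\|\rho\|_1}\big)\Big|+\Big|S\big(\tfrac{\rho}{\|\rho\|_1}\big)-S\big(\tfrac{\sigma}{\|\sigma\|_1}\big)\Big|+\Big|S\big(\tfrac{\sigma}{\|\sigma\|_1}\big)-S(\sigma)\Big|,
\end{align*}
with the outer terms controlled through the eigenvalues and the middle term controlled by Audenaert's inequality. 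Your treatment of this clause (renormalize, control the normalization error spectrally, apply the main inequality, let $T\to 0$) is essentially the paper's argument and is fine.

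The genuine gaps are in your attempted proof of the main inequality itself. First, and decisively: the classical extremal claim --- that the supremum of $H(q)-H(p)$ at fixed total-variation distance $T$ is attained at $p=(1,0,\dots,0)$, $q=(1-T,\tfrac{T}{n-1},\dots,\tfrac{T}{n-1})$ --- \emph{is} the sharp content of Audenaert's theorem, and you do not prove it; ``I would attack it by a KKT analysis together with a Schur-concavity argument'' is a statement of intent, not an argument, and as you yourself concede this is the main obstacle. Second, your reduction from the quantum to the classical case does not close in the regime $T>1-1/n$. Mirsky's inequality indeed gives $T'=\tfrac12\|r-s\|_{\ell_1}\leq T$, but you then need $f(T')\leq f(T)$ for $f(T)=T\ln(n-1)+H((T,1-T))$, and $f$ is increasing only on $[0,1-1/n]$: since $f'(T)=\ln(n-1)+\ln\tfrac{1-T}{T}$ vanishes at $T=1-1/n$, the function is strictly decreasing on $[1-1/n,1]$. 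As $T$ can genuinely exceed $1-1/n$ (orthogonal pure states give $T=1$), the inequality $f(T')\leq f(T)$ does not follow from $T'\leq T$ there, and no trivial bound rescues it: for $T$ near $1$ one must show the entropy difference is at most roughly $\ln(n-1)$, which uses support-orthogonality information that passing to decreasingly ordered eigenvalues discards. So even granting the classical bound, your argument establishes the theorem only for $T\leq 1-1/n$.
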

Indeed, the second part of the statement can be obtained by writing 
\begin{align*}
|S(\rho)-S(\sigma)|\leq \Big|S(\rho)-S\big(\frac{\rho}{\|\rho\|_1}\big)\Big|+\Big|S\big(\frac{\rho}{\|\rho\|_1}\big)-S\big(\frac{\sigma}{\|\sigma\|_1}\big)\Big|+\Big|S\big(\frac{\sigma}{\|\sigma\|_1}\big)-S(\sigma)\Big|.
\end{align*}Then, the first and the third of these terms can be easily controlled by considering the eigenvalues $(\lambda_i)_{i=1}^n$ of $\rho$ and $\sigma$ respectively while the second term is controlled by the first part of the statement.
\begin{theorem}\label{derivate-Entropy}
The function $F(\rho,p)= \frac{1-\|\rho\|_p}{p-1}$ is well defined for $p$ positive with $p\neq 1$ and $\rho$ a density matrix. It can be extended by continuity to $p\in (0,\infty)$ and this extension verifies $$F(\rho,1)=-\frac{d}{dp}\|\rho\|_p\big|_{p=1}=S(\rho).$$Moreover, the convergence at $p=1$ is uniform in the states $\rho$.

In particular, for every net $(\rho_p)_p$ of states such that  $\lim_{p  \rightarrow 1}\rho_p= \rho$ in the trace class norm, we have that $\lim_{p  \rightarrow 1}F(\rho_p,p)=S(\rho)$.
\end{theorem}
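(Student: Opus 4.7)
The plan is to handle the pointwise identity and the uniformity separately: first compute $F(\rho,p)\to S(\rho)$ from the spectral decomposition of $\rho$, and then upgrade to uniform convergence by a Taylor remainder bound that is controlled uniformly via compactness of the set of density matrices in a fixed finite dimension. Diagonalize $\rho=\sum_i\lambda_i|e_i\rangle\langle e_i|$ with $\sum_i\lambda_i=1$ and introduce $h(p):=\sum_i\lambda_i^p=\|\rho\|_p^p$, with the convention $0\ln 0=0$. Then $h$ is real-analytic on $(0,\infty)$ with $h(1)=1$, $h'(1)=\sum_i\lambda_i\ln\lambda_i=-S(\rho)$, and $h''(p)=\sum_i\lambda_i^p(\ln\lambda_i)^2\ge 0$.

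Write $\|\rho\|_p=\exp\!\bigl(p^{-1}\ln h(p)\bigr)$ and differentiate at $p=1$: using $h(1)=1$ and $\ln h(1)=0$,
\[
\frac{d}{dp}\|\rho\|_p\Big|_{p=1}=\Bigl[\frac{h'(1)}{h(1)}-\ln h(1)\Bigr]=h'(1)=-S(\rho),
\]
which shows that $F(\rho,p)=\frac{1-\|\rho\|_p}{p-1}\to S(\rho)$ as $p\to 1$ for every fixed $\rho$, so the stated continuous extension exists and equals $S(\rho)$. This proves the pointwise part of the theorem.

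For the uniformity, I would use Taylor's formula with integral remainder around $p=1$:
\[
\|\rho\|_p=1-S(\rho)(p-1)+\int_1^p (p-s)\,\frac{d^2}{ds^2}\|\rho\|_s\,ds,
\]
which gives $F(\rho,p)-S(\rho)=-\frac{1}{p-1}\int_1^p(p-s)\frac{d^2}{ds^2}\|\rho\|_s\,ds$. The function $\frac{d^2}{ds^2}\|\rho\|_s$ is a polynomial expression in $h(s),h'(s),h''(s)$ divided by powers of $h(s)$, and each of the three quantities $\lambda^s$, $\lambda^s\ln\lambda$, $\lambda^s(\ln\lambda)^2$ extends by continuity from $\lambda\in(0,1]$ to $\lambda=0$. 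Hence $(\rho,s)\mapsto\frac{d^2}{ds^2}\|\rho\|_s$ is a continuous function on the compact set $D_n\times[1/2,2]$, where $D_n$ denotes the set of density matrices in $S_1^n$; in particular it is bounded there by some constant $M=M(n)$. This yields the uniform estimate $|F(\rho,p)-S(\rho)|\le M|p-1|/2$ for $p$ close to $1$. The only subtlety here is verifying the joint continuity of the second derivative in $(\rho,s)$ at density matrices with eigenvalues approaching $0$, but the decay $\lambda^s|\ln\lambda|^k\to 0$ as $\lambda\to 0^+$ for $s>0$ takes care of this.

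Finally, the ``in particular'' statement follows by splitting
\[
|F(\rho_p,p)-S(\rho)|\le |F(\rho_p,p)-S(\rho_p)|+|S(\rho_p)-S(\rho)|,
\]
where the first term tends to $0$ by the uniform bound just established and the second by the continuity of the von Neumann entropy in trace norm provided by Theorem \ref{cont-Entropy}. The main technical obstacle is really bookkeeping rather than conceptual: one must ensure that the second derivative in $p$ is bounded uniformly in $\rho$ over the (compact) state space, which reduces to the elementary decay estimates for $\lambda^s(\ln\lambda)^k$.
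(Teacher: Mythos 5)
Your proof is correct, but it takes a genuinely different route from the paper's. The paper does not prove the key uniform convergence from scratch: it cites \cite{AHW}, where the statement is established for the function $\frac{1-\|\rho\|_p^p}{p-1}=\frac{1-tr(\rho^p)}{p-1}$, and merely remarks that passing from $\|\rho\|_p^p$ to $\|\rho\|_p$ is easy; the ``in particular'' clause is then obtained exactly as you do, by splitting off $|S(\rho_p)-S(\rho)|$ and invoking Theorem \ref{cont-Entropy}. You instead give a self-contained argument: logarithmic differentiation of $h(p)^{1/p}$ for the pointwise limit, and Taylor's formula with integral remainder plus compactness of the state space $D_n$ for the uniformity. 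What your approach buys is an explicit quantitative rate, $|F(\rho,p)-S(\rho)|\le M(n)|p-1|/2$, and it makes visible that the uniformity constant necessarily depends on the dimension $n$; this is the correct reading of the theorem, since dimension-free uniformity is false (for $\rho_n=\uno_n/n$ one has $F(\rho_n,p)\le\frac{1}{p-1}$ bounded while $S(\rho_n)=\ln n\to\infty$), and the dependence is harmless in the paper's applications, where the dimension is fixed. Two small inaccuracies, neither fatal: first, $\frac{d^2}{ds^2}\|\rho\|_s$ is not literally a polynomial in $h,h',h''$ divided by powers of $h$ --- it also contains $\ln h(s)$, the prefactor $h(s)^{1/s}$, and inverse powers of $s$ --- but all of these are jointly continuous on $D_n\times[1/2,2]$ because $h(s)\ge\max_i\lambda_i^s\ge n^{-2}$ is bounded away from zero there, so your compactness conclusion stands; second, in the final clause one should note (as the paper also implicitly assumes) that the net $(\rho_p)_p$ lives in a fixed matrix algebra, since both your uniform bound and the entropy-continuity estimate of Theorem \ref{cont-Entropy} are dimension-dependent.
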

Indeed, although the first part of Theorem \ref{derivate-Entropy} was proved in  \cite{AHW} for the function  $\frac{1-\|\rho\|_p^p}{p-1}$, it is very easy to conclude that, then, the same result must hold for 
the function $F(\rho,p)$. On the other hand, the second part of the statement is a direct consequence of the uniform convergence and the continuity of the von Neuman entropy stated in Theorem \ref{cont-Entropy}.
The following two remarks will be very useful:
\begin{remark}\label{aprox log}
For any real numbers $\lambda\in (0,1]$ and $p\geq 1$ we have $\lambda- \lambda^p=\int_1^p\lambda^q (-\ln \lambda) dq$. Therefore,
\begin{align*}
\lambda^p(-\ln \lambda)(p-1)\leq \lambda- \lambda^p\leq \lambda(-\ln \lambda)(p-1).
\end{align*}Taking $\mu=\lambda^p\in (0,1]$ we obtain
\begin{align*}
\mu(- \ln \mu^\frac{1}{p})(p-1)\leq\mu^\frac{1}{p}-\mu\leq \mu^\frac{1}{p}(- \ln \mu^\frac{1}{p}) (p-1).
\end{align*}
\end{remark}
\begin{remark}\label{Remark-tilde(C)}
We will restrict our study to those quantum channels of the form $\Ne:\ell_1^N(S_1^d)\rightarrow S_1^n$, where $\Ne$ is defined by a family of quantum channels $\big(\Ne_i:S_1^d\rightarrow S_1^n\big)_{i=1}^N$ such that $\Ne(\sum_{i=1}^Ne_i\otimes \rho_i)=\sum_{i=1}^N\Ne_i(\rho_i)$. In this case, according to Corollary \ref{cor-Theorm-fact} and Lemma \ref{quotient} we can write
\begin{align}
\tilde {C}(\Ne)=\lim_{p\rightarrow 1^+}\frac{1}{p-1}\Big(\Big\|\text{flip}\circ (id_{\ell_\infty^N(M_d)}\otimes \Ne):\ell_p^N(S_p^d)\big[\ell_1^N(S_1^d)\big]\rightarrow S_1^n\big[\ell_p^N(S_p^d)\big]\Big\|_+-1\Big).
\end{align}Moreover, according to Corollary \ref{cor-Theorm-fact}, for a fixed $p$ we know that the previous norm is attained on a positive element of the form $\rho_p=\sum_{i=1}^N\lambda_i(p) e_i\otimes e_i\otimes M_{a_i}(p)$.
\end{remark}
\begin{proof}[Proof of inequality $\leq$ in Theorem \ref{mainII}]\label{firs-inequality}

Let $\epsilon> 0$. We must find a $\delta=\delta(\epsilon)> 0$ such that $p-1< \delta$ implies $$\frac{\pi_q^o(\Ne^*)-1}{p-1}\leq C_E((\Ne_i)_i)+\epsilon,$$where $\frac{1}{p}+\frac{1}{q}=1$. Using a compactness argument, for every fixed $p> 1$ we can find an $N=N(\epsilon, p)\in \N$ such that
\begin{align}\label{standard arguments}
\Big|\pi_q^o(\Ne^*)-\pi_q^o\big((\Ne|_{\ell_1^{N(\epsilon, p)}(S_1^d)})^*\big)\Big|< (p-1)\epsilon.
\end{align}
Therefore, it suffices to prove that
\begin{align*}
\frac{\pi_q^o(\Ne^*)-1}{p-1}\leq C_E((\Ne_i)_i) + \epsilon,
\end{align*}where we consider $\Ne:\ell_1^{N(\epsilon, p)}(S_1^d)\rightarrow S_1^n$.
According to Remark \ref{Remark-tilde(C)}, for a fixed $p> 1$ we have
\begin{align*}
\pi_q^o(\Ne^*)= \sup_{\rho_p}\frac{\big\|\text{flip}\circ (id_{\ell_\infty^{N(\epsilon, p)}(M_d)} \otimes \Ne) (\rho_p)\big\|_{S_1^n\big[\ell_p^{N(\epsilon, p)}(S_p^d)\big]}} {\|\rho_p\|_{\ell_p^{N(\epsilon, p)}(S_p^d)\big[\ell_1^{N(\epsilon, p)}(S_1^d)\big]}}
\end{align*}and this supremum is attained on a positive element of the form  $$\rho_p=\sum_{i=1}^{N(\epsilon, p)}\lambda_i(p)e_i\otimes e_i\otimes M_{a_i(p)}.$$Assuming that $\|\rho_p\|_{\ell_p^{N(\epsilon,p)^2}(S_1^{d^2})}=1$ (otherwise we can normalize) we can write
\begin{align}\label{norming element}
\rho_p=\sum_{i=1}^{N(\epsilon, p)}\beta_i(p)e_i\otimes e_i\otimes B_i(p)
\end{align} for certain positive numbers $(\beta_i(p))_{i=1}^N$ verifying
\begin{align}\label{normalization condition I}
\sum_{i=1}^{N(\epsilon, p)}\beta_i(p)^p=1
\end{align}and where
\begin{align}\label{normalization condition II}
\|B_i(p)\|_{S_1^{d^2}}=1
 \end{align}is the tensor associated to an operator of the form $M_{b_i(p)}$ for every $i=1,\cdots, N(\epsilon, p)$. In particular, $B_i(p)$ is a state in $S_1^d\otimes S_1^d$ for every $p,i$.

Now, we will choose our $\delta=\delta(\epsilon, n,d)>0$ ($n$ and $d$ are fixed parameters in the problem) independently from $p$, so that Equations (\ref{condition-p-1}), (\ref{claim first term}) and (\ref{claim second term}) are verified whenever $p-1< \delta$. Note that these equations depend on $N=N(\epsilon,p)$. However, the crucial point here is that this dependence does not play any role once we have our normalization conditions (\ref{normalization condition I}) and (\ref{normalization condition II}). This is what makes it possible to choose $\delta$ independent from $p$. Here, we will just explain how such a $\delta$ can be chosen and we will not make the computations to give an explicit one. 

For $p-1< \delta$, let us consider the corresponding element $\rho_p=\sum_{i=1}^{N(\epsilon, p)}\beta_i(p)e_i\otimes e_i\otimes B_i(p)$. From this point on we will remove the dependence of $p$ and $\epsilon$ from the notation of $N$, $\beta_i$ and $B_i$ for every $i=1,\cdots, N$.  We will denote $\tilde{B}_i=(id_d \otimes \Ne_i)(B_i)$, which is a state in $S_1^d\otimes S_1^n$, and $$\xi_p=\text{flip}\circ (id_{\ell_\infty^N(M_d)} \otimes \Ne) (\rho_p)=\sum_{i=1}^N\beta_ie_i\otimes \text{flip}(\tilde{B}_i).$$ Now, using this notation we can write $\frac{\pi_q^o(\Ne^*)-1}{p-1}$ as
\begin{align*}
\frac{1}{p-1}\Big(\frac{\|\xi_p\|_{S_1^n\big[\ell_p^N(S_p^d)\big]}} {\|\rho_p\|_{\ell_p^N(S_p^d)\big[\ell_1^N(S_1^d)\big]}}-1\Big)=\frac{1}{\|\rho_p\|_{\ell_p^N(S_p^d)\big[\ell_1^N(S_1^d)\big]}}
\Big(\frac{\|\xi_p\|_{S_1^n\big[\ell_p^N(S_p^d)\big]}-\|\rho_p\|_{\ell_p^N(S_p^d)\big[\ell_1^N(S_1^d)\big]}} {p-1}\Big),
\end{align*}which can be written as
\begin{align}\label{three terms}
\frac{1}{\|\rho_p\|_{\ell_p^N(S_p^d)\big[\ell_1^N(S_1^d)\big]}}\Big(\frac{\|\xi_p\|_{S_1^n\big[\ell_p^N(S_p^d)\big]}-
\|\xi_p\|^p_{\ell_p^N(S_p^{dn})}}{p-1}+ \frac{\|\xi_p\|^p_{\ell_p^N(S_p^{dn})}-1}{p-1}+ \frac{1-\|\rho_p\|_{\ell_p^N(S_p^d)\big[\ell_1^N(S_1^d)\big]}}{p-1}\Big).
\end{align}

Using Remark \ref{(p,1)}, (\ref{normalization condition I}) and (\ref{normalization condition II}) we see that we can find a $\delta=\delta(\epsilon, d)$ such that $p-1<\delta$ guarantees
\begin{align}\label{condition-p-1}
\frac{1}{\|\rho_p\|_{\ell_p^N(S_p^d)\big[\ell_1^N(S_1^d)\big]}}=\frac{1}{\big(\sum_{i=1}^N\beta_i^p\|(id_d\otimes tr_d)(B_i)\|_{S_p^d}^p\big)^\frac{1}{p}}\leq 1+\epsilon.
\end{align}Thus, we need to study the three terms in Equation (\ref{three terms}). 

We claim that we can find a $\delta=\delta(\epsilon, n,d)>0$ so that if $p-1<\delta$ we have
\begin{align}\label{claim first term}
\frac{\|\xi_p\|_{S_1^n\big[\ell_p^N(S_p^d)\big]}-\|\xi_p\|^p_{\ell_p^N(S_p^{dn})}}{p-1}\leq S\Big(\sum_{i=1}^N\beta_i^p (tr_d\otimes id_n)(\tilde{B}_i) \Big)+\epsilon.
\end{align}and 
\begin{align}\label{claim second term}
\frac{\|\xi_p\|^p_{\ell_p^N(S_p^{dn})}-1}{p-1}+ \frac{1-\|\rho_p\|_{\ell_p^N(S_p^d)\big[\ell_1^N(S_1^d)\big]}}{p-1}\leq  -S(\tilde{B_i})+S\big((id_d\otimes tr_d)(B_i)\big)+\epsilon.
\end{align} With these estimates at hand, we can easily conclude our proof, since we will have, for every $p-1<\delta$, 
\begin{align*}
\frac{\pi_q^o(\Ne^*)-1}{p-1}&\leq (1+\epsilon)\Big\{S\Big(\sum_{i=1}^N\beta_i^p \Ne_i\big((tr_d\otimes id_d)(B_i)\big) \Big)\\&+ \sum_{i=1}^N\beta_i^p\Big[S\Big( (id_d\otimes tr_d)(B_i)\Big)-S\Big((id_d \otimes \Ne_i)(B_i)\Big)\Big]+2\epsilon\Big\}.
\end{align*}If we denote $\lambda_i=\beta_i^p$ and $\eta_i=B_i$ for every $i=1,\cdots ,N$ we see that the previous expression is (up to the $\epsilon$'s) one of those appearing in the definition of $C_E((\Ne_i)_i)$. Using that $d$ and $n$ are fixed numbers and that $S(\rho)\leq \ln m$ for any state $\rho\in S_1^m$, since the previous estimate holds for an arbitrary $\epsilon> 0$, the result follows.

It remains to prove claims (\ref{claim first term}) and (\ref{claim second term}). For the first one we define
\begin{align*}
\Delta_p=\sum_{i=1}^N\beta_i^p (tr_d\otimes id_n)(\tilde{B}_i)\text{       } \text{     and      }\text{       } \Lambda_p=\sum_{i=1}^N\beta_i^p(tr_d\otimes id_n)(\tilde{B}_i^p),
\end{align*}which are positive elements in $M_n$ such that $tr(\Delta_p)=1$ and $tr(\Lambda_p)=\|\xi_p\|^p_{\ell_p^N(S_p^{dn})}$. Moreover, we know that $\|\Lambda_p\|\leq tr(\Lambda_p)\leq 1$. Therefore, we can apply functional calculus and Remark \ref{aprox log} to conclude that
\begin{align*}
\Lambda_p^\frac{1}{p}-\Lambda_p\leq (p-1)\Lambda_p^\frac{1}{p} (-\ln\Lambda_p^\frac{1}{p}).
\end{align*}
On the other hand, according to Lemma \ref{GenralLieb} and taking into account the $\text{flip}$ map in the definition of $\xi_p$ we have $$\|\xi_p\|_{S_1^n\big[\ell_p^N(S_p^d)\big]}\leq tr_n\Big[\Big(\sum_{i=1}^N\beta_i^p(tr_d\otimes id_n)(\tilde{B}_i^p)\Big)^\frac{1}{p}\Big].$$Hence,
\begin{align*}
\frac{\|\xi_p\|_{S_1^n\big[\ell_p^N(S_p^d)\big]}-\|\xi_p\|^p_{\ell_p^N(S_p^{dn})}}{p-1}\leq \frac{tr_n(\Lambda_p^\frac{1}{p}-\Lambda_p)}{p-1}\leq
S(\Lambda_p^\frac{1}{p}),
\end{align*}where we denote $S(\Lambda_p^\frac{1}{p})=tr\big(\Lambda_p^\frac{1}{p} (-\ln\Lambda_p^\frac{1}{p})\big)$. Therefore, it suffices to show that
\begin{align}\label{claim first term technical}
\big|S(\Delta_p)-S(\Lambda_p^\frac{1}{p})\big|<\epsilon.
\end{align}According to Theorem \ref{cont-Entropy}, there exists a $\gamma=\gamma(\epsilon,n)> 0$ so that $\|\Delta_p-\Lambda_p^\frac{1}{p}\|_1\leq\gamma$ and $\big| \|\Lambda_p^\frac{1}{p}\|_1-1\big|\leq \gamma$ implies (\ref{claim first term technical}). In fact, since $\|\Delta_p\|_1=1$, the second of these condition is implied by the first one. On the other hand, we can write
\begin{align*}
\|\Delta_p-\Lambda_p^\frac{1}{p}\|_1\leq \|\Delta_p-\Lambda_p\|_1+\|\Lambda_p-\Lambda_p^\frac{1}{p}\|_1.
\end{align*}Now, using conditions (\ref{normalization condition I}) and (\ref{normalization condition II}) for the first term and Remark \ref{aprox log} for the second one, it is very easy to see that we can find a $\delta=\delta(\gamma, n, d)=\delta(\epsilon,n,  d)$ so that $p-1<\delta$ implies that the previous quantity is smaller than $\gamma$. 
This proves (\ref{claim first term technical}), so (\ref{claim first term}).
 
In order to show (\ref{claim second term}), we first note that
\begin{align*}
\frac{\|\xi_p\|^p_{\ell_p^N(S_p^{dn})}-1}{p-1}+ \frac{1-\|\rho_p\|_{\ell_p^N(S_p^d)\big[\ell_1^N(S_1^d)\big]}}{p-1}\leq \sum_{i=1}^N\beta_i^p\left(\frac{\|\tilde{B_i}\|_{S_p^{dn}}^p-1}{p-1}+\frac{1-\|(id_d\otimes tr_d)(B_i)\|_{S_p^d}^p}{p-1}\right),
\end{align*}where we have used that $\sum_{i=1}^N\beta_i^p\|(id_d\otimes tr_d)(B_i)\|_{S_p^d}^p\leq\Big(\sum_{i=1}^N\beta_i^p\|(id_d\otimes tr_d)(B_i)\|_{S_p^d}^p\Big)^\frac{1}{p}$. Then, the basic idea to obtain our estimate (\ref{claim second term}) is nothing else than differentiating the new expression. Indeed, according to Theorem \ref{derivate-Entropy}, if we differentiate such an expression we should obtain $-S(\tilde{B_i})+S\big((id_d\otimes tr_d)(B_i)\big)$ for every $i$. The problem here is that we cannot consider the $\lim_p$ since $N$ also depends on $p$ and we must have finite dimensional states. Then, we just `differentiate by hand'' by using Remark \ref{aprox log}. Let $(\alpha_j^i(p))_{j=1}^{dn}$ be the eigenvalues of the state\footnote{We remark here the dependence on $p$ to see that there is no problem with that.} $\tilde{B_i}(p)$ for every $i=1,\cdots ,N$. Then,
\begin{align}\label{second term}
\frac{\|\tilde{B_i}(p)\|_{S_p^{dn}}^p-1}{p-1}&=\sum_{j=1}^{dn}\frac{(\alpha_j^i(p))^p-\alpha_j^i(p)}{p-1}\leq \sum_{j=1}^{dn}\alpha_j^i(p)^p\ln \alpha_j^i(p)\\& \nonumber=\sum_{j=1}^{dn}\big(\alpha_j^i(p)^p-\alpha_j^i(p)\big)\ln \alpha_j^i(p)-S(\tilde{B_i})\\&\nonumber
\leq\sum_{j=1}^{dn}\alpha_j^i(p)^p\big(\ln \alpha_j^i(p)\big)^2(p-1)-S(\tilde{B_i}).
\end{align}Hence, using that the $\alpha_j^i(p)\in [0,1]$ for every $i,j,p$ and the fact that the function $f(x)=x^p\ln(x)^2$ is unifomaly upper bounded in $[0,1]$, we can find a $\delta=\delta(\epsilon, n, d)> 0$ so that $p-1< \delta$ implies $\frac{\|\tilde{B_i}(p)\|_{S_p^{dn}}^p-1}{p-1}\leq -S(\tilde{B_i})+\epsilon$. On the other hand, one can also check that 
\begin{align*}
\frac{1-\|(id_d\otimes tr_d)(B_i)\|_{S_p^d}^p}{p-1}\leq S\big( (id_d\otimes tr_d)(B_i)\big).
\end{align*}Thus, we obtain (\ref{claim second term}).
\end{proof}
\begin{remark}\label{remark pure states}
It is interesting to note that $B_i(p)$ is a pure state associated to the element $b_i(p)\in \ell_2^{d^2}$ for every $i=1,\cdots, N(\epsilon, p)$. Indeed, if $B=\sum_{i,j=1}^nb_{i,j}|i\rangle\langle j|\in M_n$, the map $M_B:M_n\rightarrow M_n$ can be seen as the map associated to the tensor $|b\rangle\langle b|\in M_n\otimes M_n$, where $|b\rangle=\sum_{i,j=1}^nb_{i,j}|ij\rangle\in \ell_2^{n^2}$. Here, the trace duality between $S_1^n$ and $M_n$ is described by $\langle A,B\rangle=tr(AB^t)$, where $B^t$ is the transpose operator.
\end{remark}
To prove the converse inequality, $\tilde{C}(\Ne)\geq C_E((\Ne_i)_i)$, we will need the following two lemmas.
\begin{lemma}\label{aequalone}
Let $\A$ and $\B$ be finite dimensional $C^*$-algebras and let $\Ne :L_1(\A)\rightarrow L_1(\B)$ be a quantum channel such that $\Ne(\uno_\A)$ has full support (that is, it is an invertible element of $\B$). Let $T:\B\rightarrow \A$ be a completely positive contraction such that there exists a positive element $a$ in the unit ball of $\B$ verifying $$\Ne^*(x)=T(axa)$$ for every $x\in \B$. Then $a=\uno_\B$ and $\Ne^*=T$.
\end{lemma}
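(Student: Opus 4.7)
The plan is to first derive that $T$ must be unital from the trace-preserving hypothesis on $\Ne$, then use trace duality to translate the full-support assumption on $\Ne(\uno_\A)$ into invertibility of a naturally associated element of $\B$, and finally combine the two ingredients to force $a=\uno_\B$. Since $\Ne$ is trace preserving, its dual $\Ne^*:\B\to\A$ is unital, so $\uno_\A=\Ne^*(\uno_\B)=T(a\uno_\B a)=T(a^2)$. Because $a\geq 0$ with $\|a\|\leq 1$ we have $a^2\leq \uno_\B$, so positivity of $T$ gives $\uno_\A=T(a^2)\leq T(\uno_\B)$, while $T$ being a contraction gives $\|T(\uno_\B)\|\leq 1$ and hence $T(\uno_\B)\leq \uno_\A$. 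Therefore $T$ is unital and $T(\uno_\B-a^2)=0$.

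Next I would bring in the preadjoint $T^*:L_1(\A)\to L_1(\B)$, characterised by $\tr_\B(yT^*(\rho))=\tr_\A(T(y)\rho)$. Taking $y=\uno_\B-a^2\geq 0$ and $\rho=\uno_\A$ gives
\begin{align*}
0=\tr_\A(T(\uno_\B-a^2))=\tr_\B\bigl(T^*(\uno_\A)^{1/2}(\uno_\B-a^2)T^*(\uno_\A)^{1/2}\bigr),
\end{align*}
which, as the trace of a positive operator, forces $(\uno_\B-a^2)^{1/2}T^*(\uno_\A)^{1/2}=0$, and hence $(\uno_\B-a^2)\,T^*(\uno_\A)=0$.

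The heart of the argument is the identification $\Ne(\uno_\A)=aT^*(\uno_\A)a$: for any $y\in\B$, trace duality gives
\begin{align*}
\tr_\B(\Ne(\uno_\A)y)=\tr_\A(\Ne^*(y))=\tr_\A(T(aya))=\tr_\B(y\,aT^*(\uno_\A)a).
\end{align*}
Since by hypothesis this element is invertible in $\B$, both positive factors $a$ and $T^*(\uno_\A)$ must be invertible: if the support projection of $a$ were not $\uno_\B$, then $aT^*(\uno_\A)a$ would vanish on its kernel, and once $a$ is invertible the relation $T^*(\uno_\A)=a^{-1}\Ne(\uno_\A)a^{-1}$ gives invertibility of $T^*(\uno_\A)$. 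Combined with $(\uno_\B-a^2)T^*(\uno_\A)=0$, this yields $a^2=\uno_\B$, so $a=\uno_\B$ (using $a\geq 0$), and hence $\Ne^*(x)=T(x)$ for every $x\in\B$.

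The main obstacle I expect is the identification $\Ne(\uno_\A)=aT^*(\uno_\A)a$: it is the trace-duality bridge that transfers the Schr\"odinger-picture hypothesis on $\Ne(\uno_\A)$ into the Heisenberg-picture statement that $T^*(\uno_\A)$ is invertible, and without it the full-support assumption cannot be brought to bear. Everything else is standard positivity and trace-duality manipulation.
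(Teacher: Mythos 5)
Your proof is correct, and it reaches $a=\uno_\B$ by a genuinely different mechanism than the paper's. The shared core is the trace-duality identity $\Ne(\uno_\A)=aT^*(\uno_\A)a$ (the paper phrases this as: since $M_a$ is self-adjoint, $\Ne^*=T\circ M_a$ implies $\Ne=M_a\circ T^*$) together with the observation that full support of $\Ne(\uno_\A)$ forces $T^*(\uno_\A)$ to have full support. The paper then runs a single chain of trace inequalities starting and ending at $tr_\A(\uno_\A)$: trace preservation gives $tr_\A(\uno_\A)=tr_\B\big(T^*(\uno_\A)^{1/2}T^*(\uno_\A)^{1/2}a^2\big)$, Cauchy--Schwarz bounds this by $tr_\B\big(T^*(\uno_\A)\big)^{1/2}tr_\B\big(T^*(\uno_\A)a^4\big)^{1/2}\leq tr_\B\big(T^*(\uno_\A)\big)=tr_\A\big(T(\uno_\B)\big)\leq tr_\A(\uno_\A)$, and the equality case of Cauchy--Schwarz plus invertibility of $T^*(\uno_\A)^{1/2}$ yields $a^2=\lambda\uno_\B$, with the equality chain pinning $\lambda=1$. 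You instead derive exact operator identities before invoking full support: unitality of $\Ne^*$ gives $T(a^2)=\uno_\A$, positivity plus contractivity of $T$ give $\uno_\A=T(a^2)\leq T(\uno_\B)\leq\uno_\A$, hence $T(\uno_\B-a^2)=0$; passing to the predual turns this into $(\uno_\B-a^2)T^*(\uno_\A)=0$, and invertibility of $T^*(\uno_\A)$ (obtained from invertibility of $aT^*(\uno_\A)a$) kills $\uno_\B-a^2$. What your route buys is that it avoids the equality-case analysis of Cauchy--Schwarz --- the step the paper treats most tersely --- replacing it with the elementary facts that a positive operator with zero trace vanishes and that $X^*X=0$ forces $X=0$; the small price is the extra (easy) step establishing that $T$ is unital. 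Both arguments use every hypothesis in essentially the same places, so neither is more general than the other.
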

\begin{proof}
Using that $\Ne$ is trace preserving we have
\begin{align*}
tr_\A(\uno_\A)&=tr_{\B}\big(\Ne (\uno_\A)\big)=tr_{\B}\big(aT^*(\uno_\A)a\big)=tr_{\B}\big(T^*(\uno_\A)^\frac{1}{2}T^*(\uno_\A)^\frac{1}{2}a^2\big)
\\&\leq tr_{\B}\big(T^*(\uno_\A))^\frac{1}{2}tr_{\B}\big(T^*(\uno_\A)a^4)^\frac{1}{2}\leq tr_{\B}\big(T^*(\uno_\A)\big)=tr_{\B}\big(\uno_\B T^*(\uno_\A)\big)\\&=tr_\A(\uno_\A T(\uno_\B))\leq tr_\A(\uno_\A),
\end{align*}where the first inequality follows from Cauchy-Schwartz inequality. Moreover, we have used that, since the maps $M_a$ is self adjoint, $\Ne^*=T\circ M_a$ implies $\Ne=M_a\circ T^*.$ Now, using that full support of $\Ne(\uno_\A)$ implies full support of $T^*(\uno_\A)$, equality in Cauchy-Schwartz inequality implies that $a^2=\uno_{\B}$. Since $a$ is positive we conclude $a=\uno_{\B}$.
\end{proof}
\begin{remark}\label{aequalone- Remark}
Given a quantum channel $\Ne:L_1(\A)\rightarrow L_1(\B)$ between finite dimensional von Neumann algebras, we can always assume that $\Ne(\uno_\A)$ has full support. Otherwise, we consider the finite dimensional von Neumann algebra $\tilde{\B}=p\B p$, where $p$ is the support projection of $\Ne(\uno_\A)$, and consider the new quantum channel $\Ne:L_1(\A)\rightarrow L_1(\tilde{\B})$.
\end{remark}
\begin{lemma}\label{limitetrace}
Let $(a(p))_p$ be a net of positive and invertible operators in $M_n$ verifying the following properties:
\begin{enumerate}
\item[1)] $\sup_p\|a(p)^{-1}\|_{M_n}\leq M$ for a certain positive constant $M$, and
\item[2)] $\lim_{p\rightarrow 1}\ln \|a(p)\|_{S_{q}^n}^{q}=0$, where $\frac{1}{p}+\frac{1}{q}=1$.
\end{enumerate}
Then,
\begin{align*}
\liminf_{p\rightarrow 1^+}\frac{tr\big(a(p)^{-1}\rho\big)-1}{p-1}\geq S(\rho)
\end{align*}for every density operator $\rho$.
\end{lemma}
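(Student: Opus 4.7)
The plan is to combine a scalar operator inequality of Peierls type with the non-negativity of quantum relative entropy, via an auxiliary Gibbs state built from $a(p)$.

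First, I would invoke the elementary scalar inequality $e^{-x}\geq 1-x$, valid for all real $x$. Applied via functional calculus to the positive invertible operator $a(p)$ with $x=\ln a(p)$, this yields the operator inequality
\begin{align*}
a(p)^{-1}\geq \uno - \ln a(p),
\end{align*}
and pairing against the density $\rho$ gives
\begin{align*}
tr\big(a(p)^{-1}\rho\big)-1\geq -tr\big(\rho\ln a(p)\big).
\end{align*}
Hypothesis (1), that $a(p)$ is uniformly bounded below in norm, is what legitimises the functional calculus (keeping $\ln a(p)$ well-behaved).

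Next I would exploit the algebraic identity $(p-1)(q-1)=1$ coming from $\tfrac1p+\tfrac1q=1$, so that $\tfrac{1}{p-1}=q-1$. Dividing the previous inequality by $p-1>0$, the proof reduces to showing
\begin{align*}
\liminf_{p\to 1^+}\,-(q-1)\, tr\big(\rho\ln a(p)\big)\geq S(\rho).
\end{align*}
To make $S(\rho)$ appear on the right-hand side, I would introduce the Gibbs-type density matrix
\begin{align*}
\sigma(p):=\frac{a(p)^q}{tr\big(a(p)^q\big)},
\end{align*}
which is positive definite since $a(p)>0$. Since $\ln\sigma(p)=q\ln a(p)-\ln tr(a(p)^q)\,\uno$, one computes
\begin{align*}
-q\,tr\big(\rho\ln a(p)\big)=-tr\big(\rho\ln\sigma(p)\big)-\ln tr\big(a(p)^q\big)=S(\rho)+D\big(\rho\,\|\,\sigma(p)\big)-\ln tr\big(a(p)^q\big),
\end{align*}
where $D(\cdot\|\cdot)$ is quantum relative entropy. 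Invoking Klein's inequality $D(\rho\|\sigma(p))\geq 0$ and rescaling by $(q-1)/q$ yields
\begin{align*}
-(q-1)\, tr\big(\rho\ln a(p)\big)\geq \frac{q-1}{q}\Big(S(\rho)-\ln tr\big(a(p)^q\big)\Big).
\end{align*}

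Finally, as $p\to 1^+$ we have $q\to\infty$, so $(q-1)/q\to 1$, and hypothesis (2), which states precisely that $\ln\|a(p)\|_{S_q^n}^q=\ln tr\big(a(p)^q\big)\to 0$, kills the remaining correction term. Taking the liminf then delivers the bound $S(\rho)$, as required. I do not anticipate a serious obstacle; the only delicate point is matching the $p/q$-duality so that the Peierls-type scalar inequality (which naturally produces the factor $q-1$ via $(p-1)(q-1)=1$) meshes with the hypothesis on $\|a(p)\|_{S_q^n}^q$ (which supplies precisely the vanishing $\ln$ term needed to cancel the Gibbs normalisation). Everything else is routine functional calculus on positive invertible matrices.
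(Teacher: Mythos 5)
Your proof is correct, and it takes a genuinely different route from the paper's. The paper deduces the lemma from Theorem \ref{derivate-Entropy}: it rewrites the derivative statement as $\lim_{p\rightarrow 1}\frac{\|\rho\|_{p/(2p-1)}-1}{p-1}=S(\rho)$, inserts $a(p)^{\frac{1}{2}}a(p)^{-\frac{1}{2}}$ around $\rho$ and applies the three-factor noncommutative H\"older inequality to get $\|\rho\|_{p/(2p-1)}\leq \|a(p)\|_{q}\,tr\big(a(p)^{-1}\rho\big)$, and then splits the resulting quotient into the desired term plus the error $\big(\frac{\|a(p)\|_{q}-1}{p-1}\big)tr\big(a(p)^{-1}\rho\big)$; killing this error uses \emph{both} hypotheses, since (1) supplies the uniform bound $tr(a(p)^{-1}\rho)\leq M$ and (2) makes $\frac{\|a(p)\|_{q}-1}{p-1}$ vanish. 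Your argument replaces all of this with the spectral inequality $a(p)^{-1}\geq \uno-\ln a(p)$, the Gibbs state $\sigma(p)=a(p)^{q}/tr(a(p)^{q})$ and Klein's inequality, after which hypothesis (2) cancels the normalisation term exactly; every step checks out (the identity $(p-1)(q-1)=1$, the entropy identity $-q\,tr(\rho\ln a(p))=S(\rho)+D(\rho\,\|\,\sigma(p))-\ln tr(a(p)^{q})$, the fact that $\|a(p)\|_{S_q^n}^q=tr(a(p)^q)$ for positive $a(p)$, and the limit $\frac{q-1}{q}\rightarrow 1$). As for what each approach buys: the paper's proof recycles Theorem \ref{derivate-Entropy}, which it needs anyway in the surrounding section, while yours is more elementary (no H\"older, no differentiation of Schatten norms) and in fact never uses hypothesis (1) at any point, so it establishes a formally stronger statement. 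One small correction to your write-up: hypothesis (1) does not ``legitimise the functional calculus''---positivity and invertibility of each $a(p)$, which is a standing assumption of the lemma, is all that is needed for $\ln a(p)$ and $\sigma(p)$ to make sense---so that remark should simply be deleted rather than read as a genuine use of (1).
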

\begin{proof}
According to Theorem \ref{derivate-Entropy} we have,
\begin{equation*}
\lim_{p\rightarrow 1}\frac{\|\rho\|_p-1}{1-p}=S(\rho).
\end{equation*}This implies
\begin{equation}\label{derivate adapted}
\lim_{p\rightarrow 1}\frac{\|\rho\|_{\frac{p}{2p-1}}-1}{p-1}=S(\rho).
\end{equation}On the other hand, for $p> 1$ we can write
\begin{align}\label{simpleII}
\frac{\|\rho\|_{\frac{p}{2p-1}}-1}{p-1}=\frac{\|a(p)^\frac{1}{2}a(p)^{-\frac{1}{2}}\rho a(p)^{-\frac{1}{2}}a(p)^\frac{1}{2}\|_{\frac{p}{2p-1}}-1}{p-1}\leq \frac{\|a(p)\|_{q}\|a(p)^{-\frac{1}{2}}\rho a(p)^{-\frac{1}{2}}\|_1-1}{p-1},
\end{align}where we have used the non-commutative generalized Holder's inequality (see \cite[Section 1]{PiXu}) with $\frac{1}{\frac{p}{(2p-1)}}=1+ \frac{1}{\frac{p}{p-1}}=1+ \frac{1}{q}$.

Since we have
\begin{align*}
\frac{\|a(p)\|_{q}\|a(p)^{-\frac{1}{2}}\rho a(p)^{-\frac{1}{2}}\|_1-1}{p-1}=\big(\frac{\|a(p)\|_{q}-1}{p-1}\big)tr(a(p)^{-1}\rho)+\frac{tr(a(p)^{-1}\rho)-1}{p-1}
\end{align*}for every $p$, we will conclude our proof from Equations (\ref{derivate adapted}) and (\ref{simpleII}) by showing $$\lim_{p\rightarrow 1}\Big(\frac{\|a(p)\|_{q}-1}{p-1}\Big)tr\big(a(p)^{-1}\rho\big)=0.$$To this end, note that
\begin{align*}
\lim_{p\rightarrow 1}\Big|\Big(\frac{\|a(p)\|_{q}-1}{p-1}\Big)tr\big(a(p)^{-1}\rho\big)\Big|\leq M\lim_{p\rightarrow 1}\Big|\frac{\|a(p)\|_{q}-1}{p-1}\Big|=M\lim_{p\rightarrow 1}\frac{q}{p}\big(\|a(p)\|_{q}-1\big)\\=M\lim_{p\rightarrow 1}\frac{q}{p}\big(e^{\frac{1}{q}\ln \|a(p)\|_{q}^{q}}-1\big)=M\lim_{p\rightarrow 1}\frac{1}{p}(\ln \|a(p)\|_{q}^{q})=0,
\end{align*}where we have used that $e^{\frac{1}{q}\ln \|a(p)\|_{q}^{q}}\simeq 1+ \frac{1}{q}\ln \|a(p)\|_{q}^{q}$ when $p$ is close to $1$.
\end{proof}
We are now ready to prove the second inequality in Theorem \ref{mainII}.
\begin{proof}[Proof of inequality $\geq$ in Theorem \ref{mainII}]\label{second-inequality}

Let $\Upsilon=\big\{(\lambda_i)_{i=1}^N, ,(\eta_i)_{i=1}^N\big\}$ be an ensemble optimizing $C_E((\Ne_i)_i)$. We must show that
\begin{align*}
\lim_{q\rightarrow \infty}q\ln \big(\Pi_q^o(\Ne^*)\big)\geq \sup\Big\{S\Big(\sum_{i=1}^N \lambda_i(tr_d\otimes id_n)\big((id_d\otimes \Ne_i)(\eta_i)\big)\Big)\\ +\sum_{i=1}^N \lambda_i\Big[S\Big((id_d\otimes tr_n)\big((id_d\otimes \Ne_i)(\eta_i)\big)\Big)-S\Big((id_d\otimes \Ne_i)(\eta_i)\Big) \Big] \Big\}.
\end{align*} Clearly, it suffices to prove the previous inequality if we consider the new channel defined by restricting $\Ne$ to $\ell_1^N(S_1^d)$. We will use the same notation $\Ne:\ell_1^N(S_1^d)\rightarrow S_1^n$ for the restricted channel. Moreover, as we explained in Remark \ref{aequalone- Remark} we can assume that $\Ne(\uno_{\ell_1^N(S_1^d)})$ has full support.

According to Theorem \ref{factorization-positivity} for every $1<q$ we can consider an optimal factorization $$\Ne^*=T_qM_{a(q)},$$ where $T_q:S_q^n\rightarrow \ell_\infty^N(M_d)$ is a completely positive map, $M_{a(q)}:M_n\rightarrow S_q^n$ is the associated operator to a certain positive element $a(q)\in M_n$ and such that $\|a(q)\|_{2q}^{2}\|T_q\|_{cb}=\Pi_q^o(\Ne^*)$. Here, $\frac{1}{p}+\frac{1}{q}=1$. Furthermore, by rescaling we may assume
\begin{align*}
\|a(q)\|^2_{2q}=\pi_{q}^o(\Ne^*)^\frac{1}{q}, \text{     }\text{     }\text{     } \|T_q:S_q^n\rightarrow \ell_\infty^N(M_d)\|_{cb}=\pi_{q}^o(\Ne^*)^\frac{1}{p}.
\end{align*}Actually, the fact that $\Ne(\uno_{\ell_1^N(S_1^d)})=a(q)T^*_q(1_{\ell_1^N(S_1^d)})a(q)$ has full support guarantees that $a(q)$ is also invertible for every $q$. By continuity, we deduce that
\begin{equation}\label{convergence}
\lim_{q\rightarrow \infty}\|a(q)\|_{2q}^{2q}=\lim_{q\rightarrow \infty}\pi_{q}^o(\Ne^*)=\|\Ne^*\|_{cb}=1.
 \end{equation}
On the other hand,
\begin{align*}
\|T_q:M_n\rightarrow \ell_\infty^N(M_d)\|_{cb}\leq \|id_n:M_n\rightarrow S_{q}^n\|_{cb}\|T_q: S_{q}^n\rightarrow \ell_\infty^N(M_d)\|_{cb}\leq n^\frac{1}{q}\pi_{q}^o(\Ne^*)^\frac{1}{p}.
\end{align*}By a compactness argument we can assume that $$\lim_{q\rightarrow \infty}T_q=T:M_n\rightarrow \ell_\infty^N(M_d),$$where $T$ is a completely positive and completely contractive map. In the same way, we see that
\begin{align*}
\|a(q)\|\leq \|a(q)\|_{2q}=\pi_{q}^o(\Ne^*)^{\frac{1}{2q}};
\end{align*}so we can assume that $$\lim_{q\rightarrow \infty}a(q)=a,$$ where $a$ is a positive operator in $M_n$ verifying $0\leq a\leq \uno_{M_n}$.

It follows by construction that $\Ne^*(x)=T(axa)$ for every $x\in M_n$. Moreover, we can apply Lemma \ref{aequalone} to conclude that $a=\uno_{M_n}$ and $T=\Ne^*$. This implies, in particular, that $\lim_{q\rightarrow \infty}a(q)^{-1}=1$. Considering a subnet we can assume that $\sup_q{\|a(q)^{-1}\|}\leq M$ for a positive constant $M$. Now, Lemma \ref{quotient} and Equation (\ref{convergence}) allow us to write
\begin{align*}
\lim_{q\rightarrow \infty}q\ln \big(\Pi_q^o(\Ne^*)\big)=\lim_{q\rightarrow \infty}q \Big( \ln\big(\|a(q)\|^2_{2q}\big)+ \ln \big(\|T_q:S_q^n\rightarrow \ell_\infty^N(M_d)\|_{cb}\big)\Big )
\\=\lim_{q\rightarrow \infty}q \ln \|T_q\|_{cb}=\lim_{q\rightarrow \infty}q \ln \|T^*_q:\ell_1^N(S_1^d)\rightarrow S_p^n\|_{cb}=\lim_{p\rightarrow 1}\frac{\|T^*_q\|_{cb}-1}{p-1}.
\end{align*}In order to simplify notation we will denote $T_p=T^*_q:\ell_1^N(S_1^d)\rightarrow S_p^n$. Now, note that
\begin{align*}
\lim_{p\rightarrow 1}\frac{\|T_p\|_{cb}-1}{p-1}\geq \lim_{p\rightarrow 1}\frac{\big\|id_{\ell_\infty^N(M_d)}\otimes T_p:\ell_p^{N}(S_p^d)[\ell_1^N(S_1^d)]\rightarrow \ell_p^N(S_p^d)[S_p^n]\big\|-1}{p-1}\\
\geq \lim_{p\rightarrow 1}\frac{1}{p-1}\Big(\frac{\|(id\otimes T_p)(\rho)\|_{\ell_p^N(S_p^{dn})}}{\|\rho\|_{\ell_p^N(S_p^d)[\ell_1^N(S_1^d)]}}-1\Big),
\end{align*} where $\rho=\sum_{i=1}^N\lambda_ie_i\otimes e_i\otimes \eta_i\in \ell_1^{N^2}(S_1^{d^2})$ is the state defined by the ensemble $\Upsilon$ that we have considered at the beginning of the proof.

Let us denote $\tilde{\xi}_p=(id_{\ell_\infty^N(M_d)}\otimes T_p)(\rho)=\sum_{i=1}^N\lambda_ie_i\otimes (id_d\otimes T_p^i)(\eta_i)\in \ell_p^N(S_p^{dn})$, where $T_p=(T_p^i:S_1^d\rightarrow S_p^n)_{i=1}^N$. The previous expression can be written as
\begin{align*}
\lim_{p\rightarrow 1}\frac{1}{p-1}\Big(\frac{\|\tilde{\xi}_p\|_p}{\|\rho\|_{(p,1)}}-1\Big)
=\lim_{p\rightarrow 1}\frac{1}{\|\rho\|_{(p,1)}}\Big(\frac{\|\tilde{\xi}_p\|_p-1+1-\|(id\otimes tr)(\rho)\|_{\ell_p^N(S_p^d)}}{p-1}\Big),
\end{align*}where we have used (\ref{(p,1)}) to write $$\|\rho\|_{(p,1)}=\|(id\otimes tr)(\rho)\|_{\ell_p^N(S_p^d)}=\Big\|\sum_{i=1}^N\lambda_ie_i\otimes (id_d\otimes tr_d)(\eta_i)\Big\|_{\ell_p^N(S_p^d)}.$$
Summarizing the previous steps, we have that
\begin{align*}
\lim_{q\rightarrow \infty}q\ln \big(\Pi_q^o(\Ne^*)\big)\geq \lim_{p\rightarrow 1}\frac{1}{\|\rho\|_{(p,1)}}\Big(\frac{\|\tilde{\xi}_p\|_p-1+1-\|(id\otimes tr)(\rho)\|_{\ell_p^N(S_p^d)}}{p-1}\Big).
\end{align*}
Now, the fact that $\rho$ is a state guarantees
\begin{align}\label{prop2 factor term}
\lim_{p\rightarrow 1}\|\rho\|_{(p,1)}=1.
 \end{align}On the other hand, according to Theorem \ref{derivate-Entropy} and the definition of the von Neumann entropy we also have
\begin{align}\label{prop2 second term}
\lim_{p\rightarrow 1}\frac{1-\|(id\otimes tr)(\rho)\|_{\ell_p^N(S_p^d)}}{p-1}=S\Big(\sum_{i=1}^N\lambda_ie_i\otimes (id_d\otimes tr_d)(\eta_i)\Big)\\\nonumber=\sum_{i=1}^N\lambda_iS\Big((id_d\otimes tr_d)(\eta_i)\Big) + H\big((\lambda_i)_{i=1}^N\big).
\end{align}Here, $H\big((\lambda_i)_{i=1}^N\big):=-\sum_{i=1}^N\lambda_i\ln \lambda_i$ is the classical (Shannon) ($\ln-$) entropy of the probability distribution $(\lambda_i)_{i=1}^N$. Therefore, the proof of the theorem will follow from (\ref{prop2 factor term}), (\ref{prop2 second term}) and the estimate
\begin{align}\label{semifinal}
\frac{\|\tilde{\xi}_p\|_{\ell_p^N(S_p^{dn})}-1}{p-1}&\geq -\sum_{i=1}^N\lambda_iS\big((id_d\otimes \Ne_i)(\eta_i)\big)- H((\lambda_i)_{i=1}^N)\\\nonumber&+S\Big(\sum_{i=1}^N\lambda_i \Ne_i\big((tr_d\otimes id_d)(\eta_i)\big)\Big).
\end{align}In order to show this last estimate, we define the state $\xi_p=\frac{\tilde{\xi}_p}{\|\tilde{\xi}_p\|_{\ell_1^N(S_1^{dn})}}$ and then write
\begin{align*}\frac{\|\tilde{\xi}_p\|_{\ell_p^N(S_p^{dn})}-1}{p-1}=\|\tilde{\xi}_p\|_1\Big(\frac{\|\xi_p\|_p-1}{p-1}\Big)+ \frac{\|\tilde{\xi}_p\|_1-1}{p-1}.
\end{align*}
Now, according to our construction
\begin{align*}
\lim_{p\rightarrow 1}\tilde{\xi}_p=\lim_{p\rightarrow 1}(id_{\ell_\infty^N(M_d)}\otimes T_q^*)(\rho)=(id_{\ell_\infty^N(M_d)}\otimes T^*)(\rho)=(id_{\ell_\infty^N(M_d)}\otimes \Ne)(\rho),
\end{align*}and
\begin{align}\label{prop2 first term1}
\lim_{p\rightarrow 1}\|\tilde{\xi}_p\|_{\ell_1^N(S_1^{dn})}=1.
\end{align}On the other hand, Theorem \ref{derivate-Entropy} says that
\begin{align}\label{prop2 first term2}
\lim_{p\rightarrow 1}\frac{\|\xi_p\|_p-1}{p-1}=- S\big((id_{\ell_\infty^N(M_d)}\otimes \Ne)(\rho)\big)=-S\Big(\sum_{i=1}^N\lambda_i e_i\otimes (id_d\otimes \Ne_i)(\eta_i)\Big)\\\nonumber=-\sum_{i=1}^N\lambda_iS\big((id_d\otimes \Ne_i)(\eta_i)\big)- H((\lambda_i)_{i=1}^N).
\end{align}
Finally, (\ref{convergence}) allows us to apply Lemma \ref{limitetrace} to the net $(a(p)^2)_p$ to obtain
\begin{align}\label{prop2 first term3}
\lim_{p\rightarrow 1}\frac{\|\tilde{\xi}_p\|_1-1}{p-1}&=\lim_{p\rightarrow 1}\frac{tr_n\Big(T_p\big(\sum_{i=1}^N\lambda_ie_i\otimes (tr_d\otimes id_d)(\eta_i)\big)\Big)-1}{p-1}\\&\nonumber=\lim_{p\rightarrow 1}\frac{tr_n\Big(a(p)^{-2}\Ne\big(\sum_{i=1}^N\lambda_ie_i\otimes (tr_d\otimes id_d)(\eta_i)\big)\Big)-1}{p-1}
\\&\nonumber\geq S\Big(\sum_{i=1}^N\lambda_i \Ne_i\big((tr_d\otimes id_d)(\eta_i)\big)\Big).
\end{align}
The estimate (\ref{semifinal}) follows now easily from (\ref{prop2 first term1})-(\ref{prop2 first term3}). This concludes the proof.
\end{proof}
\begin{remark}
Actually, we have shown that the states $\eta_i$'s and the probabilities $\lambda_i$'s in the expression
\begin{align*}
\sup\Big\{S\Big(\sum_{i=1}^N \lambda_i (\Ne\circ \phi_i)((tr_d\otimes id_d)(\eta_i))\Big)
+\sum_{i=1}^N \lambda_i\Big[S\Big((id_d\otimes tr_d)(\eta_i)\Big)\\-S\Big(\big(id_d\otimes (\Ne\circ \phi_i)\big)(\eta_i)\Big)\Big] \Big\}
\end{align*} in Theorem \ref{mainII} are given by Theorem \ref{factorization-positivity}. This means that the factorization theorem tells us the objects that we have to use in order to attain the capacity of the channel. In particular, according to Remark \ref{remark pure states} we have shown that considering pure states $\eta_i$ in the expression (\ref{d-restricted capacity}) is not a restriction, but it covers the general case.
\end{remark}
\begin{remark}\label{Remark on classical channels}[Classical channels]
As we pointed out in the introduction, it is well known that $C_{prod}^d(\Ne)$ coincides with $C_c(\Ne)$ for every $d$ and for every classical channel $\Ne:S_1^n\rightarrow S_1^n$ (this means that entanglement cannot increase the classical capacity of a classical channel). On the other hand, it is easy to see that $\pi_p^o(T:\ell_\infty^n\rightarrow \ell_\infty^n)=\pi_p(T:\ell_\infty^n\rightarrow \ell_\infty^n)$ for every $T:\ell_\infty^n\rightarrow \ell_\infty^n$. Indeed, one way of seeing this is by invoking the factorization theorem for absolutely $p$-summing maps (resp. completely $p$-summing maps) and using that $\big\|T:X\rightarrow \ell_\infty\big\|_{cb}=\big\|T:X\rightarrow \ell_\infty\big\|$   for every linear map $T$ and every operator space $X$. Thus, in this case we recover (\ref{mainI-classical channels}).
\end{remark}
\begin{remark}\label{The cases $d=1$ and $d=n$}[The cases $d=1$ and $d=n$]
Given a quantum channel $\Ne:S_1^n\rightarrow S_1^n$, $C_{prod}^1(\Ne)$ and $C_{prod}^n(\Ne)$ coincide, respectively, with the Holevo capacity and the unlimited entanglement-assisted classical capacity of $\Ne$.

To see the first one, we just write the expression in Theorem \ref{mainI} for $d=1$ and we obtain
\begin{align*}
C_{prod}^1(\Ne)=\sup\Big\{S\big(\sum_{i=1}^N \lambda_i \Ne (\xi_i)\big)
-\sum_{i=1}^N \lambda_i S\big(\Ne (\xi_i)\big) \Big\},
\end{align*}where the supremum runs over all $N\in \N$, all probability distributions $(\lambda_i)_{i=1}^N$ and all families $(\xi_i)_{i=1}^N$, with $\xi_i$ state in $S_1^n$ for every $i=1,\cdots, N$. This is exactly the expression of the Holevo capacity of $\Ne$ (see Theorem \ref{Theorem Holevo} in Section \ref{Phy Int}).

The key point to study the case $d=n$ is to realize that we do not need to consider the embedding $j_n:M_n\hookrightarrow \ell_\infty(\mathcal P, M_n)$. First of all, let us recall that $\pi_{q,n}(\Ne^*)=\pi_q^o(\Ne^*)$ for every quantum channel $\Ne^*:M_n\rightarrow M_n$, which follows from the definition of the norms (see Section \ref{Pisier's theorem Section}). Then, using that $j_n$ is a complete isometry on positive elements and the good behavior of $\pi_q^o$ with respect to positivity shown in Theorem \ref{factorization-positivity}, one easily has
\begin{align*}
\pi_q^o(j_n\circ \Ne^*)= \pi_q^o(\Ne^*)=\pi_{q,n}(\Ne^*)
\end{align*}for every quantum channel $\Ne:S_1^n\rightarrow S_1^n$. Therefore, in this case ($d=n$) Theorem \ref{mainI} is obtained from Theorem \ref{mainII} applied to the single channel $\Ne$ instead of on a family of infinitely many channels $(\Ne_i)_i$. Then, we have
\begin{align*}
C_{prod}^n(\Ne)=\lim_{q\rightarrow \infty}q\ln \big(\Pi_q^o(\Ne^*)\big)=\sup\Big\{S\Big(\Ne\big((tr_n\otimes id_n)(\eta)\big)\Big)\\ +S\Big((id_n\otimes tr_n)(\eta)\Big)-S\Big((id_n\otimes \Ne)(\eta)\Big)  \Big\},
\end{align*} where the supremum runs over all pure states $\eta\in S_1^n\otimes S_1^n$. This is exactly the expression of $C_E(\Ne)$ (see Theorem \ref{Theorem BSST} in Section \ref{Phy Int}).
\end{remark}
\section{Covariant channels and non additivity of $C_{prod}^d$}\label{Section: non-additivity}
In this section we will discuss a particularly nice kind of quantum channels called covariant channels. We will see that the factorization theorem has a very simple form for these channels. As a direct consequence of this fact, we will show that there is an easy relation between the (unlimited) entanglement-assisted classical capacity $C_E$ of a covariant channel and the \emph{cb-min entropy} of a quantum channel introduced in \cite{DJKR}. In the second part of this section, we will use our results on covariant channels to prove Theorem \ref{counterexample additivity}. As we will explain in Section \ref{Phy Int} a direct consequence of this theorem is that the product state capacity of the $d$-restricted capacity, $C_{prod}^d$, does not coincide, in general, with its regularization version for $1<d< n$.
\begin{definition}\label{Defi- covariant}
Let $G$ be a topological compact group and let us consider representations $\pi, \sigma:G\rightarrow \mathbb U(n)$, where $\mathbb U(n)$ denotes the unitary group in dimension $n$. We say that a quantum channel $\Ne:S_1^n\rightarrow S_1^n$  is \emph{covariant} (with respect to ($G,\pi,\sigma$)) if
\begin{enumerate}
\item[1.] $\int_G\pi(g)x\pi(g^*) dg=\frac{1}{n}tr_n(x)\uno_n$ for every $x\in M_n$.
\item[2.] $\Ne(\pi(g)x\pi(g^*))=\sigma(g)\Ne(x) \sigma(g^*)$ for every $x\in M_n$ and $g\in G$.
\end{enumerate}
Here, the integral is with respect to the Haar measure of the group.
\end{definition}
The following result is an easy consequence of a Pisier's version of the Wigner-Yanase-Dyson inequalities (see \cite[Lemma 1.14]{Pisierbook2}).
\begin{lemma}\label{Covariant}
Let $T:M_n\rightarrow M_n$ be a linear map which is covariant with respect to ($G,\pi,\sigma$). Then, for any $1\leq d\leq n$ we have
\begin{align*}
\pi_{p,d}(T)=n^\frac{1}{p}\|T:S_p^n\rightarrow M_n\|_{d}.
 \end{align*}In the case $d=1$ we obtain $\pi_{p,cb}(T)=n^\frac{1}{p}\|T:S_p^n\rightarrow M_n\|$ while for $d=n$ we have $\pi_p^o(T)=n^\frac{1}{p}\|T:S_p^n\rightarrow M_n\|_{cb}$.
\end{lemma}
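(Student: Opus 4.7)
The plan is to prove the two inequalities $\pi_{p,d}(T)\le n^{1/p}\|T:S_p^n\to M_n\|_d$ and $\pi_{p,d}(T)\ge n^{1/p}\|T:S_p^n\to M_n\|_d$ separately; only the second requires covariance.

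For the upper bound, I would apply the factorization characterization of $\pi_{p,d}$ (the general, not-necessarily-positive version underlying Theorem \ref{factorization-positivity d-version}, cf.\ \cite[Remark~5.11]{Pisierbook2}), so that $\pi_{p,d}(T)$ is the infimum of $\|\tilde T:S_p^n\to M_n\|_d$ over factorizations $T=\tilde T\circ M_{a,b}$ with $\|a\|_{S_{2p}^n}=\|b\|_{S_{2p}^n}=1$. I would then pick the scalar factorization $a=b=n^{-1/(2p)}\uno_n$; this choice satisfies the normalization constraint and makes $M_{a,b}(x)=n^{-1/p}x$, so that $\tilde T=n^{1/p}T$ viewed as a map $S_p^n\to M_n$. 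The inequality $\pi_{p,d}(T)\le n^{1/p}\|T:S_p^n\to M_n\|_d$ follows at once. This step uses no covariance.

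For the lower bound, I would use covariance together with an averaging argument and the noncommutative Wigner--Yanase--Dyson inequality of Pisier (\cite[Lemma~1.14]{Pisierbook2}) to show that the factorization infimum is already attained on scalar $a,b$. Given any factorization $T=\tilde T\circ M_{a,b}$, condition 2 of Definition \ref{Defi- covariant} produces a family of equivalent factorizations $T=\tilde T_g\circ M_{a_g,b_g}$ indexed by $g\in G$, where $a_g=\pi(g^*)a\pi(g)$, $b_g=\pi(g^*)b\pi(g)$, and $\tilde T_g(\cdot)=\sigma(g^*)\tilde T(\pi(g)\cdot\pi(g^*))\sigma(g)$. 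Each satisfies $\|a_g\|_{S_{2p}^n}=\|b_g\|_{S_{2p}^n}=1$ and $\|\tilde T_g:S_p^n\to M_n\|_d=\|\tilde T:S_p^n\to M_n\|_d$ by unitary invariance of the Schatten norms and of the $d$-norm. Averaging over the Haar measure, condition 1 of Definition \ref{Defi- covariant} forces $\int_G a_g\,dg=(tr(a)/n)\uno_n$ and likewise for $b$, while H\"older's inequality $|tr(a)|\le\|a\|_{S_{2p}^n}\|\uno_n\|_{S_{(2p)'}^n}=n^{1-1/(2p)}$ ensures the averaged operators still lie in the unit ball of $S_{2p}^n$. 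The WYD inequality is used to control the $S_p^n[M_n]$-norm of the averaged product $(\bar a\otimes 1)Z(\bar b\otimes 1)$ in terms of the original, so that the single scalar factorization $\bar a=\bar b=c\uno_n$ (with $c=n^{-1/(2p)}$) dominates any initial one. Taking the infimum then yields $\pi_{p,d}(T)\ge n^{1/p}\|T:S_p^n\to M_n\|_d$.

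The main obstacle is the averaging step of the lower bound: because factorizations are not linear in $(a,b)$, producing from the family $(\tilde T_g,a_g,b_g)_{g\in G}$ a single scalar factorization without loss of norm requires Pisier's WYD-type inequality in an essential way. Once the general formula is established, the two displayed special cases are immediate: for $d=1$, the $d$-norm is the ordinary operator norm, giving $\pi_{p,cb}(T)=n^{1/p}\|T:S_p^n\to M_n\|$; for $d=n$, the supremum in the definition of the cb-norm of a map into $M_n$ is attained at amplification level $n$, so $\|T:S_p^n\to M_n\|_n=\|T:S_p^n\to M_n\|_{cb}$ and we recover $\pi_p^o(T)=n^{1/p}\|T:S_p^n\to M_n\|_{cb}$.
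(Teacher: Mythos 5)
Your upper bound coincides with the paper's (the scalar factorization $a=b=n^{-1/(2p)}\uno_n$, which needs no covariance), and your route for the lower bound --- replace the factorization by its conjugates under the group action, average over the Haar measure, and invoke Pisier's Wigner--Yanase--Dyson type inequality \cite[Lemma 1.14]{Pisierbook2} --- is also exactly the paper's strategy. However, the averaging step, as you state it, would fail. You form the linear averages $\bar a=\int_G a_g\,dg=\frac{tr(a)}{n}\uno_n$ and use H\"older on $tr(a)$ to keep them in the unit ball of $S_{2p}^n$; but Pisier's lemma does not compare $\big(\int_G\|a_g x b_g\|_p^p\,dg\big)^{1/p}$ with the product built from linear averages --- it compares it with the one built from $(2p)$-power averages:
\begin{align*}
\Big(\int_G\big\|a_g\, x\, b_g\big\|_p^p\,dg\Big)^{\frac{1}{p}}\le\Big\|\Big(\int_G a_g^{2p}\,dg\Big)^{\frac{1}{2p}}\, x\,\Big(\int_G b_g^{2p}\,dg\Big)^{\frac{1}{2p}}\Big\|_p.
\end{align*}
For linear averages the corresponding inequality is false in general: already for commuting diagonal operators with $b_g\equiv\uno$ it would assert $\int_G\|a_g x\|_p^p\,dg\le\|\bar a x\|_p^p$, contradicting Jensen's inequality ($\int a^p\,dg\ge(\int a\,dg)^p$ for $p\ge 1$). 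The slip also shows up internally in your plan: your $\bar a=\frac{tr(a)}{n}\uno_n$ is in general strictly smaller than the scalar $n^{-1/(2p)}\uno_n$ with which you claim to conclude.

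The repair is precisely what the paper does, and it makes your H\"older estimate unnecessary: since $(\pi(g^*)a\pi(g))^{2p}=\pi(g^*)a^{2p}\pi(g)$, condition 1 of Definition \ref{Defi- covariant} applied to $a^{2p}$ gives $\int_G a_g^{2p}\,dg=\frac{tr(a^{2p})}{n}\uno_n=\frac{1}{n}\uno_n$ (because $\|a\|_{2p}=1$ means $tr(a^{2p})=1$), so the $(2p)$-power average equals exactly $n^{-1/(2p)}\uno_n$, and the displayed inequality converts any factorization into the domination $\|(id_d\otimes T)(x)\|_{S_p^d[M_n]}\le\|\tilde T\|_d\, n^{-1/p}\|x\|_{S_p^d[S_p^n]}$; taking the infimum over factorizations yields the lower bound. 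Note also that before averaging you must pass from the identity of maps $T=\tilde T_g\circ M_{a_g,b_g}$ to the pointwise dominations $\|(id_d\otimes T)(x)\|_{S_p^d[M_n]}\le\|\tilde T\|_d\,\|(\uno\otimes a_g)x(\uno\otimes b_g)\|_{S_p^{dn}}$, valid for every $g$ and every $x$, since this is the form to which the WYD inequality applies (the paper obtains these by conjugating $x$ inside a single Pietsch domination). Your treatment of the special cases $d=1$ and $d=n$ (the latter via Smith's lemma) is correct.
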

\begin{proof}
To prove inequality $\leq$ just note that
\begin{align*}
\pi_{p,d}(T:M_n\rightarrow M_n)\leq \pi_{p,d}(id_n:M_n\rightarrow S_p^n)\|T:S_p^n\rightarrow M_n\|_{d}\leq n^\frac{1}{p}\|T:S_p^n\rightarrow M_n\|_{d}.
\end{align*}Here, the first inequality follows from the very definition of the $\pi_{p,d}$ norm while the second inequality can be obtained by considering the trivial factorization in the factorization theorem of $\pi_{p,d}(id_n:M_n\rightarrow S_p^n)$.
For the converse inequality let us fix $1\leq d\leq n$ and assume that $\pi_{p,d}(T)=1$. We will conclude our proof if we show that $\|T:S_p^n\rightarrow M_n\|_d\leq n^{-\frac{1}{p}}$. Now, according to the factorization theorem there exist positive elements $a,b\in M_n$ verifying $\|a\|_{2p}=\|b\|_{2p}=1$ such that
\begin{align*}
\|(id_d\otimes T)(x)\|_{S_p^d[M_n]}\leq \|(\uno\otimes a)x(\uno\otimes b)\|_{S_p(\ell_2^d\otimes\ell_2^n)}
\end{align*} for every $x\in M_{dn}$.

Now, for every $x\in M_{dn}$ we have
\begin{align*}
\|(id_d\otimes T)(x)\|_{S_p^d[M_n]}&= \big\|(\uno\otimes \sigma(g))\big(id\otimes T)(x)\big)(\uno\otimes \sigma(g^*))\big\|_{S_p^d[M_n]}\\&
=\big\|(id\otimes T)\big((\uno\otimes \pi(g))x(\uno\otimes \pi(g^*))\big)\big\|_{S_p^d[M_n]}\\&
\leq \big\|(\uno\otimes a\pi(g))x(\uno\otimes \pi(g^*)b)\big\|_{S_p(\ell_2^d\otimes\ell_2^n)}
\end{align*}for every $g\in G$. Therefore, according to \cite[Lemma 1.14]{Pisierbook2} we obtain
\begin{align*}
&\|(id_d\otimes T)(x)\|_{S_p^d[M_n]}\leq \Big(\int_G\big\|(\uno\otimes a\pi(g))x(\uno\otimes \pi(g^*)b)\big\|_{S_p(\ell_2^d\otimes\ell_2^n)}^pdg\Big)^\frac{1}{p}
\\&\leq \Big\|\Big(\uno\otimes \big(\int_G(\pi(g^*)a\pi(g))^{2p}dg\big)^\frac{1}{2p}\Big)x \Big(\uno\otimes \big(\int_G(\pi(g^*)b\pi(g))^{2p}dg\big)^\frac{1}{2p}\Big)\Big\|_{S_p(\ell_2^d\otimes\ell_2^n)}\\&
=\Big\|\Big(\uno\otimes \big(\int_G \pi(g^*)a^{2p}\pi(g)dg\big)^\frac{1}{2p}\Big)x \Big(\uno\otimes \big(\int_G \pi(g^*)b^{2p}\pi(g)dg\big)^\frac{1}{2p}\Big)\Big\|_{S_p(\ell_2^d\otimes\ell_2^n)}\\&
=n^{-\frac{1}{p}}\|x\|_{S_p^d[S_p^n]}.
\end{align*}
This concludes the proof.
\end{proof}
The previous lemma says that if we are dealing with a covariant channel $\Ne$ we can always take $a=n^{-\frac{1}{2p}}\uno_n$ in the factorization given by Theorem \ref{factorization-positivity}. Thus, in order to compute $C_{prod}^d(\Ne)$ for these kinds of channels we will have to differentiate the norm $\|\Ne:S_1^n\rightarrow S_p^n\|_d$ instead of the $\pi_{q,d}(\Ne^*)$-norm. Indeed, we have
\begin{corollary}
For any covariant quantum channel $\Ne:S_1^n\rightarrow S_1^n$ we have
\begin{align*}
C_{prod}^d(\Ne)=\ln n+ \frac{d}{dp}[\|\Ne:S_1^n\rightarrow S_p^n\|_{d}]|_{p=1}
\end{align*}for every $1\leq d\leq n$.
\end{corollary}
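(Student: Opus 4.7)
The plan is to combine Theorem~\ref{mainI} with Lemma~\ref{Covariant} and then carry out the differentiation in $p$ by hand. Starting from
\begin{align*}
C_{prod}^d(\Ne) = \frac{d}{dp}\big[\pi_{q,d}(\Ne^*)\big]\big|_{p=1}, \qquad \tfrac{1}{p}+\tfrac{1}{q}=1,
\end{align*}
the first observation I would make is that if $\Ne:S_1^n\to S_1^n$ is covariant with respect to $(G,\pi,\sigma)$ in the sense of Definition~\ref{Defi- covariant}, then its adjoint $\Ne^*:M_n\to M_n$ is also covariant, simply with respect to $(G,\sigma,\pi)$: condition~1 is symmetric in the two representations, and dualizing the identity $\Ne(\pi(g)x\pi(g^*)) = \sigma(g)\Ne(x)\sigma(g^*)$ gives $\Ne^*(\sigma(g)y\sigma(g^*)) = \pi(g)\Ne^*(y)\pi(g^*)$. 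Consequently Lemma~\ref{Covariant} applies to $\Ne^*$ and yields
\begin{align*}
\pi_{q,d}(\Ne^*) \,=\, n^{1/q}\,\|\Ne^*: S_q^n\to M_n\|_d.
\end{align*}

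Next I would use trace duality to rewrite the right-hand norm in terms of $\Ne$ itself. The operator space structures on $S_1^n$ and $S_p^n$ are exactly the preduals (with respect to the trace pairing) of $M_n$ and $S_q^n$, and the $d$-th amplification on both sides corresponds to the same bounded/predual pairing at the matrix level. A direct computation with the pairing $\langle A,B\rangle = \tr(AB)$ tensored with the trace on $M_d$ gives
\begin{align*}
\|\Ne^*: S_q^n\to M_n\|_d \,=\, \|\Ne: S_1^n\to S_p^n\|_d,
\end{align*}
so that $\pi_{q,d}(\Ne^*) = n^{1/q}\,\|\Ne:S_1^n\to S_p^n\|_d$.

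For the final step, denote $f(p) = \|\Ne:S_1^n\to S_p^n\|_d$ and differentiate the product $p\mapsto n^{1/q}f(p)$ at $p=1$ (equivalently $q=\infty$). Writing $n^{1/q} = n^{1-1/p}$, the standard chain rule gives $\frac{d}{dp}[n^{1/q}]\big|_{p=1} = \ln n$, and at $p=1$ itself $n^{1/q}=1$. Moreover, since $\Ne$ is a quantum channel and is therefore completely contractive from $S_1^n$ to $S_1^n$, we have $f(1) = \|\Ne:S_1^n\to S_1^n\|_d = 1$. Applying the Leibniz rule,
\begin{align*}
\frac{d}{dp}\big[\pi_{q,d}(\Ne^*)\big]\big|_{p=1}
= (\ln n)\cdot f(1) + 1\cdot f'(1)
= \ln n + \frac{d}{dp}\big[\|\Ne:S_1^n\to S_p^n\|_d\big]\big|_{p=1},
\end{align*}
which combined with Theorem~\ref{mainI} yields the claimed formula.

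The main obstacle I foresee is the duality step: one needs a clean justification that the $d$-amplified norm of $\Ne^*:S_q^n\to M_n$ equals the $d$-amplified norm of $\Ne:S_1^n\to S_p^n$ in the appropriate operator space sense used by Lemma~\ref{Covariant} and by the factorization theorem. A minor but necessary verification is the differentiability of $f(p)$ at $p=1$ from the right (so that the product rule is legal); this is essentially guaranteed because $\pi_{q,d}(\Ne^*)$ is differentiable at $p=1$ by Lemma~\ref{quotient} together with the existence of the capacity $C_{prod}^d(\Ne)$, and $n^{1/q}$ is smooth and nonzero, so $f = n^{-1/q}\pi_{q,d}(\Ne^*)$ inherits the derivative.
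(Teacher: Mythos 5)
Your proposal is correct and follows essentially the same route as the paper's own proof: observe that $\Ne^*$ inherits the covariance, apply Lemma \ref{Covariant} to get $\pi_{q,d}(\Ne^*)=n^{1/q}\|\Ne^*:S_q^n\to M_n\|_d$, use the duality $\|\Ne^*:S_q^n\to M_n\|_d=\|\Ne:S_1^n\to S_p^n\|_d$, and extract $\ln n$. The only cosmetic difference is that you differentiate the product $n^{1/q}f(p)$ by the Leibniz rule, whereas the paper works with $C_{prod}^d(\Ne)=\lim_{q\to\infty}q\ln\pi_{q,d}(\Ne^*)=\ln n+\lim_{q\to\infty}q\ln\|\Ne:S_1^n\to S_p^n\|_d$ and converts the remaining limit into the derivative at $p=1$ (as in Lemma \ref{quotient}); the two formulations are equivalent.
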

\begin{proof}
First of all, note that $\Ne^*$ also verifies condition 2. in Definition \ref{Defi- covariant}. Therefore, applying Lemma \ref{Covariant} we obtain
\begin{align*}
C_{prod}^d(\Ne)=\lim_{q\rightarrow \infty}q \ln \pi_{p,d}(\Ne^*)&=\ln n+ \lim_{q\rightarrow \infty}q \ln \|\Ne^*:S_q^n\rightarrow M_n\|_{d}\\&=\ln n+ \lim_{q\rightarrow \infty}q \ln \|\Ne:S_1^n\rightarrow S_p^n\|_{d}\\&=\ln n+\frac{d}{dp}[\|\Ne:S_1^n\rightarrow S_p^n\|_{d}|]_{p=1}.
\end{align*}
\end{proof}
In particular, in this case we have an easy relation between the (unlimited) entanglement-assisted classical capacity of a quantum channel, $C_E(\Ne)$, and the cb-min entropy of $\Ne$ introduced in \cite{DJKR}: $$C_{CB,min}(\Ne):=-\frac{d}{dp}[\|\Ne:S_1^n\rightarrow S_p^n\|_{cb}]|_{p=1}.$$ We obtain that for every covariant quantum channel the equality
\begin{align*}
C_E(\Ne)=\ln n- C_{CB,min}(\Ne)
\end{align*}holds.
As we promised, we finish this section by proving Theorem \ref{counterexample additivity}.
In the proof, we will use the following result, which can be found in \cite{JuPa}.
\begin{theorem}\label{theorem exact capacity dep channel}
Let us consider the quantum depolarizing channel with parameter $\lambda\in [0,1]$, $\mathcal D_\lambda:S_1^n\rightarrow S_1^n$, defined by $$\mathcal D_\lambda(\rho)=\lambda\rho+ (1-\lambda)\frac{1}{n}tr(\rho)\uno_n \text{      }\text{    for every    }\text{      }  \rho\in S_1^n,$$and let $d$ be any natural number such that $1\leq d\leq n$. Then, 
\begin{align*}
\lambda\ln  (nd)-\ln 2\leq C_{prod}^d(\mathcal D_\lambda)\leq \lambda\ln  (nd).
\end{align*}
\end{theorem}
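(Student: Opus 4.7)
The plan is to work directly with the entropic formula (\ref{d-restricted capacity}) defining $C_{prod}^d$, exploiting the identity that for any state $\rho\in S_1^d\otimes S_1^n$ one has $(id_d\otimes \mathcal D_\lambda)(\rho) = \lambda\rho + (1-\lambda)\,\rho^A\otimes(\uno_n/n)$, where $\rho^A = (id_d\otimes tr_n)(\rho)$ is the marginal on the reference system. This single decomposition will drive both bounds.

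For the upper bound, fix any ensemble $\{\lambda_i,\phi_i,\eta_i\}$ and set $\rho_i=(id_d\otimes \phi_i)(\eta_i)$; since $\phi_i$ is trace preserving, $(id_d\otimes tr_n)(\rho_i)=(id_d\otimes tr_d)(\eta_i)=:\rho_i^A$. Concavity of the von Neumann entropy applied to the convex decomposition above gives
\begin{equation*}
S\bigl((id_d\otimes \mathcal D_\lambda)(\rho_i)\bigr)\ge \lambda\, S(\rho_i) + (1-\lambda)\bigl[\ln n + S(\rho_i^A)\bigr].
\end{equation*}
Substituting into (\ref{d-restricted capacity}) and using the elementary estimates $S(\sum_i\lambda_i\mathcal N_i(\cdot))\leq \ln n$, $S(\rho_i)\geq 0$ and $S(\rho_i^A)\leq \ln d$, one collects terms to obtain
\begin{equation*}
C_{prod}^d(\mathcal D_\lambda)\leq \ln n + \lambda\sum_i\lambda_i S(\rho_i^A) - (1-\lambda)\ln n \leq \lambda\ln(nd).
\end{equation*}

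For the lower bound, I would construct an explicit ensemble exploiting the $\mathbb U(n)$-covariance of $\mathcal D_\lambda$. Fix an isometric embedding $V_0\colon \C^d\hookrightarrow \C^n$ and choose unitaries $U_1,\ldots,U_K\in \mathbb U(n)$ with weights $\lambda_i\geq 0$, $\sum_i\lambda_i=1$, satisfying the tight-frame identity $\sum_i\lambda_i U_iV_0V_0^*U_i^* = (d/n)\uno_n$. Such a finite ensemble exists because Schur's lemma identifies the Haar average $\int_{\mathbb U(n)}UV_0V_0^*U^*\,dU$ with $(d/n)\uno_n$, and that integral is approximable by atomic probability measures. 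Put $\phi_i(\sigma)=U_iV_0\sigma V_0^*U_i^*$ and $\eta_i=|\Phi_d\rangle\langle\Phi_d|$, the maximally entangled state on $\C^d\otimes \C^d$. Then one checks that (i) $\rho_i^A=\uno_d/d$, so $S(\rho_i^A)=\ln d$; (ii) $\sum_i\lambda_i\mathcal D_\lambda(\phi_i(\uno_d/d))=\mathcal D_\lambda(\uno_n/n)=\uno_n/n$, so the output entropy equals $\ln n$; and (iii) each joint state equals $\lambda|\Phi_i\rangle\langle\Phi_i|+\frac{1-\lambda}{nd}\uno_{nd}$, whose spectrum consists of $a:=\lambda+(1-\lambda)/(nd)$ with multiplicity one and $(1-a)/(nd-1)$ with multiplicity $nd-1$, so the joint entropy satisfies $h(a)+(1-a)\ln(nd-1)\leq \ln 2 + (1-\lambda)\ln(nd)$, where $h(a)=-a\ln a - (1-a)\ln(1-a)$. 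Inserting these quantities into (\ref{d-restricted capacity}) yields
\begin{equation*}
C_{prod}^d(\mathcal D_\lambda)\geq \ln n + \ln d - \ln 2 - (1-\lambda)\ln(nd) = \lambda\ln(nd) - \ln 2.
\end{equation*}

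The main technical obstacle is the tight-frame construction. It is straightforward when $d\mid n$ (direct-sum decomposition of $\C^n$ into orthogonal $d$-dimensional blocks), but for general $1\leq d\leq n$ one needs either a unitary $2$-design or an approximation of the Haar measure on $\mathbb U(n)$ by finitely supported atomic measures. Uniform continuity of the von Neumann entropy (Theorem \ref{cont-Entropy}) absorbs any small deviation from the exact tight-frame identity into the additive $\ln 2$ term appearing in the lower bound.
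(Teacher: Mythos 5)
Your proposal is correct, but it follows a genuinely different route from the paper, which in fact contains no proof of Theorem \ref{theorem exact capacity dep channel} at all: the result is quoted from the companion preprint \cite{JuPa}, where the \emph{exact} value of $C_{prod}^d(\mathcal D_\lambda)$ is computed with the machinery of Section \ref{Section: non-additivity} --- the depolarizing channel is covariant, so Lemma \ref{Covariant} gives $\pi_{q,d}(\mathcal D_\lambda^*)=n^{1/q}\|\mathcal D_\lambda^*:S_q^n\rightarrow M_n\|_d$, and the capacity is obtained by computing this norm exactly and differentiating at $p=1$. You instead stay entirely on the entropic side: you bound the defining expression (\ref{d-restricted capacity}) directly, using the decomposition $(id_d\otimes\mathcal D_\lambda)(\rho)=\lambda\rho+(1-\lambda)\,\rho^A\otimes\frac{1}{n}\uno_n$ together with concavity of the von Neumann entropy for the upper bound, and an explicit ensemble (maximally entangled inputs, isometric encodings conjugated by unitaries averaging $V_0V_0^*$ to $\frac{d}{n}\uno_n$) for the lower bound. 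I have checked the computations: the marginals, the spectrum $\bigl\{a,\frac{1-a}{nd-1}\bigr\}$ of the joint output state, and the estimates $h(a)\leq\ln 2$ and $1-a\leq 1-\lambda$ all come out right, and both inequalities follow. Your approach buys elementarity and self-containedness --- it would let the paper drop the citation to \cite{JuPa} for this weaker two-sided bound --- while the approach of \cite{JuPa} buys the exact capacity, which is more than the theorem needs.

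The ``main technical obstacle'' you flag is not actually an obstacle, and neither a unitary $2$-design nor a Haar-approximation argument is required. The discrete Weyl (generalized Pauli) unitaries $U_{ab}=X^aZ^b$, $0\leq a,b\leq n-1$, where $X$ is the cyclic shift and $Z$ the modulation, satisfy $\frac{1}{n^2}\sum_{a,b}U_{ab}\,\sigma\, U_{ab}^*=\frac{tr(\sigma)}{n}\uno_n$ for every $\sigma\in M_n$ and every $n$; taking $K=n^2$, uniform weights, and the $U_i$ ranging over these unitaries gives the tight-frame identity $\sum_i\lambda_iU_iV_0V_0^*U_i^*=\frac{d}{n}\uno_n$ exactly, for all $1\leq d\leq n$, with no divisibility assumption. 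Alternatively, since $\frac{d}{n}\uno_n=\int_{\mathbb U(n)}UV_0V_0^*U^*\,dU$ lies in the convex hull of the compact set $\{UV_0V_0^*U^*:U\in\mathbb U(n)\}$ inside the self-adjoint part of $M_n$, and convex hulls of compact sets are closed in finite dimension, Carath\'eodory's theorem yields an exact finite convex combination. Either way your lower-bound ensemble exists exactly, and the appeal to Theorem \ref{cont-Entropy} can simply be deleted.
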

In fact, in \cite{JuPa} the exact value of $C_{prod}^d(\mathcal D_\lambda)$ is computed and Theorem \ref{theorem exact capacity dep channel} is a much weaker result. However, it will be enough for our purposes.
\begin{proof}[Proof of Theorem \ref{counterexample additivity}]

\

Let $\Ne_1:S_1^n\rightarrow S_1^n$ be the quantum channel defined by $$\Ne_1(\rho)=\sum_{i=1}^n tr(\rho e_{i,i})e_{i,i}$$ for every $\rho\in S_1^n$, where here $e_{i,i}$ denotes the ($n\times n$)-matrix with all entries equal to zero up to the entry $(i,i)$ which equals one. Note that this channel can be seen as
\begin{align*}
\Ne_1=i\circ id_{\ell_1^n}\circ P:S_1^n\rightarrow \ell_1^n\rightarrow \ell_1^n\hookrightarrow S_1^n,
\end{align*}where $P$ is the projection of $S_1^n$ onto the diagonal matrices and $i$ is the inclusion of diagonal matrices in $S_1^n$. That is, $\Ne_1$ is the classical identity (so the identity on $\ell_1^n$) regarded as a quantum channel. Hence, we have, $$C_{prod}^d(\Ne_1)=\ln n$$ for every $1\leq d\leq n$. Indeed, using Theorem \ref{mainI} this is immediate from the fact that
\begin{align*}
C_{prod}^d(\Ne_1)=\lim_{q\rightarrow \infty}q\ln\pi_{q,d}(\Ne_1^*)=\lim_{q\rightarrow \infty}q\ln\pi_q(\Ne_1^*)=\lim_{q\rightarrow \infty}q\ln n^\frac{1}{q}=\ln n.
\end{align*}Here, we have used that $\pi_{q,d}(\Ne_1^*)=\pi_q(\Ne_1^*)=n^\frac{1}{q}$. Indeed, the fact that $P^*$ and $i^*$ are complete contractions with $P^*\circ i^*=\uno_n$ and $i^*\circ P^*=\uno_{\ell_\infty^n}$ joint with the comments in Remark \ref{Remark on classical channels} guarantee that $$\pi_{q,d}(\Ne_1^*)=\pi_{q,d}(P^*\circ id_{\ell_\infty^n}\circ i^*)=\pi_{q,d}(id_{\ell_\infty^n})=\pi_q(id_{\ell_\infty^n})=n^\frac{1}{q}.$$
On the other hand, we will consider the depolarizing channel $\Ne_2^\lambda:S_1^n\rightarrow S_1^n$ defined by
\begin{align*}
\Ne_2^\lambda(\rho)=D_\lambda(\rho)= \lambda \rho+ (1-\lambda)tr(\cdot)\frac{\uno_n}{n}
\end{align*}for every $\rho\in S_1^n$. According to Theorem \ref{theorem exact capacity dep channel}, if we fix $\lambda=\frac{2}{3}\in (0,1)$ and $1< d=\sqrt{n} < n$ we have
\begin{align*}
\ln n -\ln 2\leq C_{prod}^{\sqrt{n}}(\mathcal \Ne_2^{\frac{2}{3}})\leq \ln n.
\end{align*}From this point on, we will assume that $\lambda$ is fixed and we will remove its dependence on $\Ne_2$.
Note that the following estimate holds for every pair of channels $\Ne_1$ and $\Ne_2$ and every $d$ with $1\leq d^2\leq n$.
\begin{align}\label{non-addit formula}
C_{prod}^{d^2}(\Ne_1\otimes \Ne_2)\geq C_{prod}^1(\Ne_1)+ C_{prod}^{d^2}(\Ne_2).
\end{align}This inequality follows from the fact that if we are using entanglement dimension $d^2$ the capacity is greater than or equal to the capacity given by the specific protocol in which Alice and Bob use all the entanglement in the second channel and they use independently the first channels without using any entanglement. Formally, we always have
\begin{align*}
\pi_{q,d^2}(\Ne_1^*\otimes \Ne_2^*)=\big\|id\otimes (\Ne_1^*\otimes \Ne_2^*):\ell_q(S_q^{d^2})\otimes_{min} (M_n\otimes_{min}M_n)\rightarrow \ell_q(S_q^{d^2})[M_n\otimes_{min}M_n]\big\|\\ \geq  \big\|id\otimes \Ne_1^*:\ell_q\otimes_{min} M_n\rightarrow \ell_q[M_n]\big\|\big\|id\otimes \Ne_2^*:\ell_q(S_q^{d^2})\otimes_{min} M_n\rightarrow \ell_q(S_q^{d^2})[M_n]\big\|,\end{align*}which equals $\pi_{q,1}(\Ne_1^*)\pi_{q,d^2}(\Ne_2^*)$. Indeed, this can be obtained by restricting to product elements in $$\ell_q(S_q^{d^2})\otimes_{min} (M_n\otimes_{min}M_n)=(\ell_q\otimes_q (\ell_q(S_q^{d^2}))\otimes_{min} (M_n\otimes_{min}M_n).$$Then, (\ref{non-addit formula}) follows directly from the relation $C_{prod}^d(\Ne)=\lim_{q\rightarrow \infty}q\ln \pi_{q,d}(\Ne^*)$ proved in or main Theorem \ref{mainI}. Hence, by considering our particular choice $\lambda=\frac{2}{3}$ and $d=\sqrt{n}$, we have 
\begin{align}\label{Equation 1 nonadditivity I}
C_{prod}^{n}(\Ne_1\otimes \Ne_2)\geq C_{prod}^1(\Ne_1)+ C_{prod}^{n}(\Ne_2)\geq \ln n +\frac{4}{3}\ln n-\ln 2=2\ln n +\frac{1}{3}\ln n-\ln 2.
\end{align}Let us recall that $C^{\sqrt{n}}_{prod}(\Ne_1)=\ln n$ and $\ln n- \ln2\leq C^{\sqrt{n}}_{prod}(\Ne_2)\leq\ln n$, so the previous expression gives a counterexample for the additivity of $C_{prod}^{d}$ by using different channels $\Ne_1$ and $\Ne_2$. Let us show how to find our channel $\Ne$. It is very easy to see that $\Ne_2$ is a covariant channel (with respect to ($G=\mathbb U (n),\pi=id,\sigma=id$)). According to Lemma \ref{Covariant} we then have
\begin{align*}
C_{prod}^{\sqrt{n}}(\Ne_2)=\lim_{q\rightarrow \infty}q\ln\pi_{q,\sqrt{n}}(\Ne_2^*)=\ln n+ \lim_{q\rightarrow \infty}q\ln\|\Ne_2^*:S_q^n\rightarrow M_n\|_{\sqrt{n}}.
\end{align*}By considering our previous estimates we easily deduce 
\begin{align}\label{cero condition}
-\ln 2\leq \lim_{q\rightarrow \infty}q\ln\|\Ne_2^*:S_q^n\rightarrow M_n\|_{\sqrt{n}}\leq 0.
\end{align}
Let $$\Ne:S_1^n\oplus_1 S_1^n\rightarrow S_1^n$$ be the quantum channel defined via $\Ne^*:M_n\rightarrow M_n\oplus_\infty M_n$ such that $$\Ne^*(A)=\Ne_1^*(A)\oplus \Ne_2^*(A).$$By considering the particular factorization $\Ne^*=\Ne^*\circ id_n$ in the factorization theorem, we have that 
\begin{align*}
\pi_{q,d}(\Ne^*)\leq n^{\frac{1}{q}}\|\Ne^*:S_q^n\rightarrow M_n\oplus_\infty M_n\|_d=n^{\frac{1}{q}}\max\Big\{\|\Ne_1^*:S_q^n\rightarrow M_n\|_d, \|\Ne_2^*:S_q^n\rightarrow M_n\|_d\Big\},
\end{align*}for every $q$ and $d$. Therefore,
\begin{align}\label{Equation 1 nonadditivity II}
C_{prod}^{\sqrt{n}}(\Ne)=\lim_{q\rightarrow \infty}q\ln\pi_{q,\sqrt{n}}(\Ne^*)\leq\lim_{q\rightarrow \infty}q\ln n^{\frac{1}{q}}\max\Big\{\|\Ne_1^*:S_q^n\rightarrow M_n\|_{\sqrt{n}}, \|\Ne_2^*:S_q^n\rightarrow M_n\|_{\sqrt{n}}\Big\}\\\nonumber=\ln n+ \lim_{q\rightarrow \infty}q \ln \max\{1, \|\Ne_2^*:S_q^n\rightarrow M_n\|_{\sqrt{n}}\}=\ln n,
\end{align}where we have used (\ref{cero condition}) in the last inequality. 

Finally, if we consider $\Ne\otimes \Ne:(S_1^n\oplus_1 S_1^n)\otimes (S_1^n\oplus_1 S_1^n)\rightarrow S_1^n\otimes S_1^n$ we see that this channel extends $\Ne_1\otimes \Ne_2$. Therefore, according to (\ref{Equation 1 nonadditivity I}) and (\ref{Equation 1 nonadditivity II}) we have
\begin{align*}
C_{prod}^n(\Ne\otimes \Ne)\geq C_{prod}^n(\Ne_1\otimes \Ne_2)\geq 2\ln n +\frac{1}{3}\ln n-\ln 2\geq 2C_{prod}^{\sqrt{n}}(\Ne)+\frac{1}{3}\ln n-\ln 2,
\end{align*}as we wanted. Note that the inequality $C_{prod}^n(\Ne\otimes \Ne)\geq C_{prod}^n(\Ne_1\otimes \Ne_2)$ follows from the fact that the norm 
\begin{align*}
\Big\|id\otimes (\Ne^*\otimes \Ne^*):S_q^n\otimes_{min} (M_n\otimes_{min}M_n)\rightarrow S_q^n\big[(M_n\oplus_\infty M_n)\otimes_{min}(M_n\oplus_\infty M_n)\big]\Big\|
\end{align*}is greater than or equal to
\begin{align*}
\Big\|id\otimes (\Ne_1^*\otimes \Ne_2^*):S_q^n\otimes_{min} (M_n\otimes_{min}M_n)\rightarrow S_q^n\big[M_n\otimes_{min}M_n\big]\Big\|,
\end{align*}which can be seen as a consequence of \cite[Corollary 1.3]{Pisierbook2} and it exactly means that $\pi_{q,d}(\Ne^*\otimes \Ne^*)\geq  \pi_{q,d}(\Ne_1^*\otimes \Ne_2^*)$ for every $q$ and $d$. This completes the proof.
\end{proof}
%
%
%
%
%
%
\section*{Acknowledgments}
We thank Toby S. Cubitt for many helpful conversations. A part of this work was done at the Institut Mittag-Leffler, Djursholm (Sweden), during a Semester on Quantum Information Theory in Fall 2010 .
\section{Appendix: Physical interpretation of the restricted capacities of quantum channels}\label{Phy Int}
In this section we will explain the notion of classical capacity of a quantum channel in more detail. In particular, we will state Holevo-Schumacher-Westmoreland's (HSW) Theorem, Bennett, Shor, Smolin, Thapliyal's (BSST) Theorem and we will explain the connections between the capacity studied in this work and the one studied in \cite{Shor}. 

Given a quantum channel $\Ne:S_1^n\rightarrow S_1^n$, the $d$-restricted classical capacities of the channel can be defined within the following common ratio-expression:
\begin{align*}
\lim_{\epsilon\rightarrow 0}\limsup_{k\rightarrow \infty}\Big\{\frac{m}{k}:\exists_\mathcal A,\exists_\mathcal B \text{   such that   } \big\|id_{\ell_1^{2^m}}-\mathcal B\circ \Ne^{\otimes_k}\circ \mathcal A\big\|< \epsilon\Big\}.
\end{align*}
Let us first assume $d=1$, so that Alice and Bob are not allowed to use any entanglement in their protocol to encode-transmit-decode information. Then, $\mathcal A:\ell_1^{2^m}\rightarrow \otimes^kS_1^n$ will be a quantum channel representing Alice's encoder from classical information to quantum information. On the other hand, Bob will decode the information he receives from Alice via the $k$ times uses of the channel, $\Ne^{\otimes_k}:\otimes^kS_1^n\rightarrow \otimes^kS_1^n$, by means of a quantum channel $\mathcal B:\otimes^kS_1^n\rightarrow\ell_1^{2^m}$. The key point here is that they want this composition to be asymptotically close to the identity map. That is, they want to have $\big\|id_{\ell_1^{2^m}}-\mathcal B\circ \Ne^{\otimes_k}\circ \mathcal A\big\|< \epsilon$.

The case in which Alice and Bob are allowed to share an entangled state is a bit more subtle. Let us assume that they share a $d$-dimensional state $\rho\in S_1^d\otimes S_1^d(=S_1(H_A)\otimes S_1(H_B)$) in the protocol. Then, a general encoder for Alice will be described by a channel of the form: $$\mathcal A=(\mathcal M_\mathcal A\otimes id_d)\circ i:\ell_1^{2^m}\rightarrow \ell_1^{2^m}\otimes (S_1^d\otimes S_1^d)\rightarrow \otimes^kS_1^n\otimes S_1^d,$$where $i:\ell_1^{2^m}\rightarrow \ell_1^{2^m}\otimes (S_1^d\otimes S_1^d)$ is the map defined by $i(x)=x\otimes \rho$ for every $x\in \ell_1^{2^m}$ and $\mathcal M_\mathcal A:\ell_1^{2^m}\otimes S_1^d\rightarrow \otimes^kS_1^n$ is a quantum channel. Since Alice has not access to Bob's part of $\rho$, the state received by Bob will be of the form $\big((\Ne^{\otimes_k}\otimes id_d)\circ \mathcal A\big)(x)$, where $x$ is the message that Alice wants to transmit. Finally, Bob's decoder will be a quantum channel $$\mathcal B:\otimes^kS_1^n\otimes S_1^d\rightarrow  \ell_1^{2^m}.$$
As in the previous case, the goal is to have $\big\|id_{\ell_1^{2^m}}-\mathcal B\circ (\Ne^{\otimes_k}\otimes id_d)\circ \mathcal A\big\|< \epsilon$.

The following diagram represents the two previous situations:
$$\xymatrix@R=1cm@C=2cm {{\bigotimes^k S_1^n}\ar[r]^{\Ne^{\otimes_k}} &
{\bigotimes^k S_1^n}\ar[d]^{\mathcal B} \\
{\ell_1^{2^m}}\ar[u]^{\mathcal A}\ar[r]^{\mathcal B\circ \Ne^{\otimes_k}\circ \mathcal A} & {\ell_1^{2^m}}}
\text{   }\text{   } \text{   }
\xymatrix@R=1cm@C=2cm {{\bigotimes^k S_1^n\otimes S_1^d}\ar[r]^{\Ne^{\otimes_k}\otimes id_d} &
{\bigotimes^k S_1^n\otimes S_1^d}\ar[dd]^{\mathcal B} \\
{(\ell_1^{2^m}\otimes S_1^d)\otimes S_1^d}\ar[u]^{\mathcal M_{\mathcal A}\otimes id_d} & {}\\
{\ell_1^{2^m}}\ar[u]^{i}\ar[r]^{\mathcal B\circ (\Ne^{\otimes_k} \otimes id_d)\circ \mathcal A} & {\ell_1^{2^m}}.}
$$
Note that, if we refer to ``dimension $d$ per channel use'', we should consider a state $\rho$ of dimension $d^k$ ($\rho\in S_1^{d^k}\otimes S_1^{d^k}$) if we are using $k$ times the channel ($\Ne^{\otimes_k}$) in our protocol.

Motivated by the noisy channel coding theorem (\ref{classical-classical- Equation}), one could try to obtain a similar result for the $d$-restricted capacities. However, the situation is more difficult in the quantum case (even for $d=1$). A first approach to the problem consists of restricting the protocols that Alice and Bob can perform. HSW theorem gives a nice formula when for the classical capacity of a quantum channel\footnote{When we don't mention assisted-entanglement means that no entanglement is allowed in the protocol.} Alice (the sender) is not allowed to distribute one entangled state among more than one channel use. That is, Alice encodes her messages into product states: $\rho_1\otimes\cdots \otimes \rho_k\in \otimes^kS_1^n$ (see \cite{Hol}, \cite{SW}).
\begin{theorem}[HSW]\label{Theorem Holevo}
Given a quantum channel $\Ne:S_1^n\rightarrow S_1^n$, we define
\begin{align*}
\chi(\Ne):= \sup \Big\{S\Big(\Ne(\sum_{i=1}^N\lambda_i\rho_i)\Big)-\sum_{i=1}^N\lambda_iS\Big(\Ne(\rho_i)\Big)\Big\},
\end{align*}where the supremum is taken over all $N\in \N$, all probability distributions $(\lambda_i)_{i=1}^N$ and all states $\rho_i\in S_1^n$ for all $i=1,\cdots, N$. Then, $\chi(\Ne)$ is the classical capacity of the channel when the sender is not allowed to distribute one entangled state among more than one channel use.
\end{theorem}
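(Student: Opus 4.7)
The plan is to prove the HSW theorem in two stages: identify $\chi(\Ne)$ with the quantity $C_{prod}^1(\Ne)$ already appearing in this paper, and then establish matching direct and converse coding theorems for $C_{prod}^1(\Ne)$.

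\emph{Reduction to $C_{prod}^1(\Ne)$.} First I would specialize the expression (\ref{d-restricted capacity}) to $d=1$. Since $S_1^1=\C$, each quantum channel $\phi_i:S_1^1\to S_1^n$ is simply a choice of a state $\xi_i=\phi_i(1)\in S_1^n$, each pure state $\eta_i\in S_1^1\otimes S_1^1$ is trivial, and the two entropy terms involving partial traces of $\eta_i$ vanish. What remains is
$$C_{prod}^1(\Ne)=\sup\Big\{S\Big(\sum_i\lambda_i\Ne(\xi_i)\Big)-\sum_i\lambda_iS(\Ne(\xi_i))\Big\}=\chi(\Ne),$$
which is precisely the Holevo quantity; this identification is also noted by the authors in their remark on the cases $d=1$ and $d=n$.

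\emph{Direct and converse coding.} It remains to show that this supremum admits the operational interpretation built into the definition of $C_{prod}^1(\Ne)$. For the converse, given any product-state code achieving $m$ message bits in $k$ channel uses with error below $\epsilon$, let $X$ be the uniformly random message and $Y$ Bob's guess: Fano's inequality gives $H(X|Y)\le 1+\epsilon m$, while by the Holevo bound (additive on product-state inputs) the accessible information satisfies $H(X:Y)\le k\,\chi(\Ne)$, yielding $m/k\le\chi(\Ne)+o(1)$. For achievability, I would fix a near-optimal ensemble $\{(\lambda_i,\xi_i)\}$, draw $2^m$ codewords of length $k$ i.i.d.\ from it, and decode using typical-subspace projections combined with a Hayashi--Nagaoka--type union bound to conclude that the average error vanishes whenever $m/k<\chi(\Ne)$.

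\emph{Main obstacle.} The crux is the achievability step: exhibiting a quantum decoder whose average error is small despite the non-orthogonality of the output codewords. Classical Chernoff-style union bounds are unavailable, so one must appeal to a noncommutative substitute --- most commonly the Hayashi--Nagaoka operator inequality applied to a square-root (or sequential) measurement built from typical subspace projectors. The converse via Holevo plus Fano, and the algebraic reduction of (\ref{d-restricted capacity}) for $d=1$, are comparatively routine.
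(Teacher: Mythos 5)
There is nothing in the paper to compare your proof against: the paper does not prove Theorem \ref{Theorem Holevo}. It is quoted as the known Holevo--Schumacher--Westmoreland theorem, with the proof delegated to the citations \cite{Hol} and \cite{SW}; likewise the operational interpretation of the formula (\ref{d-restricted capacity}) for general $d$ is delegated to Shor's argument (the appendix says one can follow the argument in \cite{Shor} verbatim to obtain it). What the paper itself proves about the $d=1$ case is only an identity between analytic quantities, namely that the expression (\ref{d-restricted capacity}) at $d=1$ equals the derivative of the $(q,1)$-summing norm (Theorem \ref{mainI} together with the remark on the cases $d=1$ and $d=n$); the coding-theoretic content of your statement is imported, never derived.

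Read against that background, your outline is the correct standard route, but be aware of two things. First, your reduction step carries no operational weight in the logic of this paper: specializing (\ref{d-restricted capacity}) to $d=1$ correctly recovers $\chi(\Ne)$ (each $\phi_i:S_1^1\to S_1^n$ is a state $\xi_i=\phi_i(1)$, the $\eta_i$ are trivial, and the two entropy terms in $\eta_i$ collapse to $0$ and $S(\Ne(\xi_i))$), but since (\ref{d-restricted capacity}) is itself only a formula whose operational meaning rests on the very coding arguments you are trying to establish, your proof must stand on the direct and converse coding theorems alone --- which, to your credit, is how you arrange it. Second, the substance of the theorem is concentrated exactly where you flag the obstacle. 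The converse (Holevo bound plus Fano, with the output Holevo quantity of product-state codes bounded by $k\,\chi(\Ne)$ via subadditivity of entropy) is complete in outline. The achievability half, however, is only named: invoking ``typical-subspace projectors plus a Hayashi--Nagaoka-type operator inequality'' identifies the right tool but omits the construction of the conditional and unconditional typical projectors, the operator inequality controlling the square-root/sequential measurement error, the expectation-over-random-codes estimate, and the expurgation needed to pass from average error to the worst-case criterion $\|id_{\ell_1^{2^m}}-\mathcal B\circ\Ne^{\otimes_k}\circ\mathcal A\|<\epsilon$ used in the paper's definition of capacity. That content is the HSW theorem; as written, your proposal is a faithful (and slightly modernized --- Hayashi--Nagaoka postdates \cite{Hol} and \cite{SW}) road map of the literature proof rather than a proof, and completing it amounts to reproducing the arguments the paper chose to import by citation.
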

When we compute a capacity imposing this restriction we usually talk about \emph{the product state capacity of $\Ne$}. The product state classical capacity, $\chi$, is also called \emph{Holevo capacity}. Note that it does coincide with our definition of $C^1_{prod}$ in (\ref{d-restricted capacity}). It is not difficult to see that the classical capacity of a quantum channel $\Ne$ is the regularization version of $\chi(\Ne)$:
\begin{align}\label{regularization of chi}
C_c(\Ne)=\chi(\Ne)_{reg}:=\sup_k\frac{\chi(\Ne^{\otimes_k})}{k}.
\end{align}
It follows from (\ref{regularization of chi}) that $\chi(\Ne)\leq C_c(\Ne)$.  Whether $C_c(\Ne)=\chi(\Ne)$ for every quantum channel $\Ne$ was a major question in QIT for a long time. The problem was recently solved by Hastings, who showed that both capacities are different for certain channels (\cite{Has}). We refer (\cite{ASW}, \cite{BrHo}, \cite{FKM}) for a more complete explanation of the problem and some open questions in the area. Hastings' result says that we do need to consider the regularization (\ref{regularization of chi}) to compute the classical capacity of a quantum channel. Remarkably, for classical channels the product state version of the capacity is given by the formula in (\ref{classical-classical- Equation}). The reason to have the same expression for the general capacity $C_c$ is that such an expression is additive on classical channels: $C_c(\Ne_1\otimes \Ne_2)=C_c(\Ne_1)+C_c(\Ne_2)$. So one immediately obtains equality between both capacities by (\ref{regularization of chi}).

One could argue that the form of $\chi(\Ne)$ given in Theorem \ref{Theorem Holevo} does not look so much like an analogue formula to (\ref{classical-classical- Equation}). Recall that, if we denote by $H(X)$ the Shannon entropy of a random variable $X$, the \emph{mutual information} of two random variables $X$, $Y$ is defined as $H(X:Y)=H(X)+X(Y)- H(X,Y)$. Since in the quantum setting Shannon entropy is replaced by the von Neumann entropy of a quantum channel $S(\rho):= -tr(\rho \ln \rho)$\footnote{In quantum information theory von Neumann entropy is usually defined as $S(\rho):= -tr(\rho \log_2 \rho)$.}, the quantum generalization of the mutual information for a bipartite mixed state $\rho^{AB}\in S_1^n\otimes S_1^n$, which reduces to the classical mutual information when $\rho^{AB}$ is diagonal in the product  basis of the two subsystems, is $$S(\rho^A)+ S(\rho^B)-S(\rho^{AB}),$$where $\rho^A=(id\otimes tr)(\rho^{AB})$ and $\rho^B=(tr\otimes id)(\rho^{AB})$. Thus, the expression
\begin{align*}
\max_{\rho\in S_1(H_{A})\otimes S_1(H_{B})}\Big\{S(\rho^B)+S\big(\Ne(\rho^A)\big)-S\Big(\big(\Ne\otimes id_{B(H_B)}\big)(\rho)\Big)\Big\}
\end{align*}is a natural generalization of the classical channel's maximal input:output mutual information (\ref{classical-classical- Equation}) to the quantum case and it is equal to the classical capacity whenever $\Ne$ is a classical channel. However, it was shown in \cite{BSST} that this amount exactly describes the (unlimited) entangled-assisted classical capacity $C_{E}(\Ne)$.
\begin{theorem}[BSST]\label{Theorem BSST}
For a noisy quantum channel $\Ne:S_1^n\rightarrow S_1^n$ the (unlimited) entanglement-assisted classical capacity is given by the expression:
\begin{align*}
C_{E}(\Ne)=\max_{\rho\in S_1^n\otimes S_1^n}\Big\{S\Big(\big(tr_n\otimes id_n\big)(\rho)\Big)+S\Big(\Ne(\big(id_n\otimes tr_n\big)(\rho)\Big)-S\big((\Ne\otimes id_n)(\rho)\big)\Big\}.
\end{align*}
\end{theorem}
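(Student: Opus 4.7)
The plan is to derive this as the $d=n$ specialization of Theorem \ref{mainI}. The first step is to identify $C_E(\Ne)$ with the paper's quantity $C_{prod}^n(\Ne)$: when the allowed per-channel-use entanglement dimension equals the input dimension $n$ of the channel, Schmidt decomposition reduces any encoded input block to an $n\times n$ bipartite resource per use, and the only remaining question is whether correlating distinct channel uses can help. Ruling this out is the additivity content of BSST itself, which I would establish from strong subadditivity applied to the mutual-information expression below (equivalently, from the tensorisation behaviour of $\pi_q^o$ under the min tensor product). Granting $C_E(\Ne)=C_{prod}^n(\Ne)$, the theorem reduces to computing $C_{prod}^n(\Ne)$.

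Next I would invoke Remark \ref{The cases $d=1$ and $d=n$}: for $d=n$ the $\ell_p(S_p^d)$-summing norm $\pi_{q,n}$ collapses to the completely $p$-summing norm $\pi_q^o$, so the auxiliary embedding $j_d:M_n\hookrightarrow \ell_\infty(\mathcal P,M_d)$ needed when $d<n$ is unnecessary and Theorem \ref{mainII} applies directly to the single channel $\Ne$ rather than to an indexed family. Combined with Lemma \ref{quotient}, this yields
\begin{equation*}
C_{prod}^n(\Ne)=\sup_{\eta}\Bigl\{S\bigl(\Ne((tr_n\otimes id_n)(\eta))\bigr)+S\bigl((id_n\otimes tr_n)(\eta)\bigr)-S\bigl((id_n\otimes \Ne)(\eta)\bigr)\Bigr\},
\end{equation*}
the supremum running over pure states $\eta\in S_1^n\otimes S_1^n$. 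Up to swapping the two tensor factors (an obvious symmetry that lets $\Ne$ act on either leg), this is the BSST expression restricted to pure inputs.

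The remaining task is to upgrade the sup over pure $\eta$ to the max over general $\rho\in S_1^n\otimes S_1^n$ stated in BSST. Here I would use that the quantity in braces equals the quantum mutual information $I(A':B)$ of the output $(\Ne\otimes id)(\rho)$, which is invariant under local isometries on $B$ and is non-decreasing when $B$ is enlarged by adjoining an ancilla. Any mixed $\rho\in S_1^n\otimes S_1^n$ can therefore be purified in $S_1^n\otimes S_1^{n^2}$ without decreasing the expression, and any pure state in $S_1^n\otimes S_1^{n^2}$ has Schmidt rank at most $n$, so it can be realised after a local isometry on $B$ as a pure state in $S_1^n\otimes S_1^n$ giving the same value of $I(A':B)$. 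The supremum is then attained as a maximum by compactness of the state space and uniform continuity of entropy (Theorem \ref{cont-Entropy}).

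The main obstacle I anticipate is the circularity-free identification $C_E(\Ne)=C_{prod}^n(\Ne)$: verifying it without quoting BSST itself requires a direct additivity argument for $C_{prod}^n$ under tensor products of channels, which in the $\pi_q^o$ language amounts to showing that the completely $p$-summing norm behaves multiplicatively on channels when tensored with the min tensor product. This is plausible from the structure developed in Section \ref{Pisier's theorem Section}, but does need to be carried out carefully; everything else is essentially bookkeeping on entropies combined with the factorization machinery of Sections \ref{Pisier's theorem Section} and \ref{main results section}.
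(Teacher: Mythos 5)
The paper never proves Theorem \ref{Theorem BSST}: it is imported verbatim from \cite{BSST}, and the surrounding discussion (``it was shown in \cite{BSST} that this amount exactly describes\dots'', and the later remark that the expression ``is additive on quantum channels'') treats both its operational content and its additivity as known. Measured as a standalone proof, your proposal has a genuine gap precisely at the point you flag and then pass over. Theorem \ref{Theorem BSST} is an \emph{operational coding theorem}: $C_E(\Ne)$ is defined through protocols (encoders, decoders, $\epsilon$-error, $k\to\infty$), and the theorem asserts that this protocol-defined quantity equals a one-shot entropic formula. Your plan only ever manipulates formulas. You identify $C_E(\Ne)$ with ``the paper's quantity $C_{prod}^n(\Ne)$'', but in this paper $C_{prod}^n(\Ne)$ \emph{is} the entropic expression (\ref{d-restricted capacity}); the identification of that expression with the operational restricted capacity is exactly what the paper delegates to the coding arguments of \cite{BSST} and \cite{Shor} (``Following the same ideas as in \cite{BSST} and \cite{Shor} one can see\dots'', and Theorem \ref{interpretation Shor}). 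Your Schmidt-decomposition reduction and the strong-subadditivity argument address only whether correlated or higher-dimensional entanglement can increase the \emph{formula}; they supply neither the converse (a Holevo-type bound showing no protocol beats the formula) nor achievability (an HSW/BSST-type random-coding protocol attaining it). Nothing in the $p$-summing machinery can substitute for this: Theorem \ref{mainII} and Lemma \ref{quotient} are identities between the expression (\ref{d-restricted capacity}) and derivatives of $\pi_{q,d}$, and are silent about protocols. So even if every step you sketch were completed, what you would have proved is: ``the expression (\ref{d-restricted capacity}) at $d=n$ equals the BSST mutual-information maximum over mixed states, and that maximum is additive'' --- a statement about formulas, not Theorem \ref{Theorem BSST}.

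Two further remarks. First, your parenthetical claim that additivity can ``equivalently'' be obtained from the tensorisation behaviour of $\pi_q^o$ under the min tensor product is unsupported: the paper proves only a one-sided supermultiplicativity inequality by restricting to product elements (inside the proof of Theorem \ref{counterexample additivity}), full multiplicativity of $\pi_q^o$ on channels is established nowhere in the paper, and given Hastings-type failures of closely related norm identities it should not be assumed; additivity of the mutual-information expression via strong subadditivity is the correct route, but it must actually be written out. Second, the formula-level parts of your plan are correct and essentially reproduce what the paper already does in its remark on the cases $d=1$ and $d=n$: the collapse $\pi_{q,n}(\Ne^*)=\pi_q^o(\Ne^*)$, Theorem \ref{mainII} with Lemma \ref{quotient} yielding the supremum over pure states $\eta\in S_1^n\otimes S_1^n$, and then your purification/Schmidt-rank/local-isometry argument together with compactness and Theorem \ref{cont-Entropy} upgrading this to the stated maximum over mixed $\rho$ (the swap of tensor factors is indeed harmless). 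If your aim were only to check that the paper's $C_{prod}^n$ coincides with the BSST expression --- taking the operational identification from \cite{BSST}, as the paper itself does --- that part of your argument works; as a proof of the theorem as stated, it does not.
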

There is a crucial difference between Theorem \ref{Theorem Holevo} and Theorem \ref{Theorem BSST}. While the first case describes the product state classical capacity, the last theorem describes the general (unlimited) entanglement-assisted classical capacity. In fact, it can be seen that the expression $C_{E}(\Ne)$ in Theorem \ref{Theorem BSST} describes the product state version of the capacity but, furthermore, it is additive on quantum channels. So, no regularization is required in this case. In \cite{Shor} the author studied the classical capacity of a quantum channel with restricted assisted entanglement (which involves, in particular, the two previous capacities). The main result proved in \cite{Shor} can be read as follows.
\begin{theorem}\label{interpretation Shor}
Given a quantum channel $\Ne:S_1^n\rightarrow S_1^n$, for any $1\leq d\leq \ln n$ we define
\begin{align}\label{d-restricted capacity Shor}
R_{prod}^d(\Ne):=\sup\Big\{S\Big(\sum_{i=1}^N \lambda_i \Ne(\rho_i)\Big) +\sum_{i=1}^N \lambda_i\Big[S\big(\rho_i\big)-S\big(\big(id_n\otimes \Ne\big)(\chi_{\rho_i})\Big)\Big] \Big\},
\end{align}where $\chi_{\rho_i}\in S_1^n\otimes S_1^n$ denotes any purification of the state $\rho_i$\footnote{For every state $\rho\in S_1^n$ there exists a unit element $x$ in $\ell_2^k\otimes \ell_2^n$ so that the pure state $\chi_\rho\in S_1^k\otimes S_1^n=S_1(\ell_2^k\otimes \ell_2^n)$ given by the rank-one projection on $x$, verifies that $(tr_k\otimes id_n)(\chi_\rho)=\rho$. Moreover, if we allow $\chi_\rho\in S_1^n\otimes S_1^n$ we can get $(tr_n\otimes id_n)(\chi_\rho)=(id_n\otimes tr_n)(\chi_\rho)=\rho$.}. Here, the supremum runs over all $N\in \N$, all probability distributions $(\lambda_i)_{i=1}^N$, and all families $(\rho_i)_{i=1}^N$ of states in $S_1^n$ with $\sum_{i=1}^N\lambda_iS(\rho_i)\leq d$. Then, $R_{prod}^d(\Ne)$ is the classical capacity of $\Ne$ with assisted entanglement when
\begin{enumerate}
\item[a)] Alice and Bob are restricted to protocols in which they can use entropy of entanglement $d$ per channel use.
\item[b)] The sender is not allowed to distribute one entangled state among more than one channel use.
\end{enumerate}
\end{theorem}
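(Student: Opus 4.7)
The plan is to establish this as a quantum coding theorem with matching direct and converse parts, in the standard style of quantum Shannon theory. The first step is to rewrite the objective as
\begin{align*}
R_{prod}^d(\Ne) &= \chi\big(\{\lambda_i,\Ne(\rho_i)\}_i\big) + \sum_{i=1}^N \lambda_i \Big[S(\rho_i) + S(\Ne(\rho_i)) - S\big((id_n\otimes \Ne)(\chi_{\rho_i})\big)\Big],
\end{align*}
exposing it as the sum of a Holevo quantity for the output ensemble $\{\Ne(\rho_i)\}$ and a convex combination of BSST quantum mutual informations $I(\rho_i,\Ne):=S(\rho_i)+S(\Ne(\rho_i))-S((id_n\otimes \Ne)(\chi_{\rho_i}))$ of the bipartite states $(id_n\otimes \Ne)(\chi_{\rho_i})$. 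This decomposition suggests a two-layer coding protocol, one layer of plain HSW for the ``which-$\rho_i$'' choice and one layer of BSST for the entanglement-assisted transmission given that choice.

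For achievability, fix an ensemble with $\sum_i \lambda_i S(\rho_i)\leq d$ and $k$ channel uses. Alice picks a strongly-typical sequence $(i_1,\dots,i_k)$ from $\lambda^{\otimes k}$; on position $j$ Alice and Bob hold a symmetric purification $\chi_{\rho_{i_j}}\in S_1^n\otimes S_1^n$. By entanglement dilution (Bennett--Bernstein--Popescu--Schumacher) the whole resource is preparable from a shared maximally-entangled state of entropy $\sum_i \lambda_i S(\rho_i)+o(1)\leq d+o(1)$ per use, meeting the budget. The type sequence itself carries a portion of the classical message, which Bob decodes by HSW on the output states $\{\Ne(\rho_i)\}$ at rate $\chi(\{\lambda_i,\Ne(\rho_i)\}_i)$; within each type class of size $\approx \lambda_i k$ Alice runs a BSST-style code on $\chi_{\rho_i}^{\otimes \lambda_i k}$ to achieve rate $I(\rho_i,\Ne)$ per use. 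A random-coding argument with typicality then combines the two layers and pushes the error to zero, yielding the target rate $R_{prod}^d(\Ne)-o(1)$.

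For the converse, consider any $(2^{kR},k,\epsilon_k)$ protocol with $\epsilon_k\to 0$ and per-use entanglement-entropy budget $\tfrac{1}{k}S(\Phi^A)\leq d$, where $\Phi^{AB}$ is the shared state. By Fano's inequality and the Holevo bound, $kR\leq I(M;B^k\Phi^B)+k\epsilon_k$. I would expand this mutual information by standard chain rules and subadditivity/strong-subadditivity, together with data processing, to single-letterize it as a Holevo contribution from Alice's effective per-use signal ensemble $\{\lambda_i,\Ne(\rho_i)\}$ plus a weighted sum of quantum mutual informations $I(\rho_i,\Ne)$ on the entanglement-bearing side; concavity of entropy then turns the budget $\tfrac{1}{k}S(\Phi^A)\leq d$ into the ensemble constraint $\sum_i\lambda_i S(\rho_i)\leq d$. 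The hardest step will be this single-letterization under restriction (b): one has to bookkeep the shared entanglement across uses so that each signal state $\rho_i$ receives exactly a pure purification $\chi_{\rho_i}\in S_1^n\otimes S_1^n$ built from the allotted entanglement, and reduce from general mixed shared resources to pure-state purifications by Schmidt decomposition (using that the objective depends only on the reduced state $\rho_i$). Without restriction (b) this single-letter form would only be available after regularization; with (b) it holds as stated and taking $k\to\infty$ closes the converse.
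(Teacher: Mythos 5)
You should first be aware that the paper contains no proof of this theorem: immediately after the statement it attributes both halves to Shor's work --- the protocol achieving (\ref{d-restricted capacity Shor}) under restrictions a) and b) is \cite[Theorem 1]{Shor}, and the matching upper bound under restriction b) is \cite[Section 4]{Shor}. So the only meaningful comparison is with Shor's argument, and at the level of architecture your proposal reconstructs exactly that argument: your rewriting of the objective as the Holevo quantity $\chi\big(\{\lambda_i,\Ne(\rho_i)\}_i\big)$ plus the weighted sum of BSST mutual informations $I(\rho_i,\Ne)$ is correct algebra, and the two-layer scheme (an HSW code selecting the per-use signal states, a BSST code run inside each type class) is precisely Shor's protocol. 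The approach is therefore the right one; the problem is that at the two places where the real work happens, your proposal only gestures.

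The main gap is the converse. Your entire plan is to expand $I(M;B^k\Phi^B)$ by ``standard chain rules and subadditivity/strong-subadditivity, together with data processing'' and single-letterize; you yourself call this the hardest step, and nothing in the outline shows that it closes. Concretely: (i) in an arbitrary protocol each channel input is correlated with Bob's whole share of the pre-shared state and with all other uses, so the reference purifying a given input is an enormous system, not a second copy of $\ell_2^n$; reducing to symmetric purifications $\chi_{\rho_i}\in S_1^n\otimes S_1^n$ requires invariance of the mutual information under processing of the reference together with restriction b) to decouple the uses, and this is exactly where b) must enter (without it only a regularized bound survives, as you note); (ii) turning the budget $\tfrac{1}{k}S(\Phi^A)\leq d$ into the ensemble constraint $\sum_i\lambda_i S(\rho_i)\leq d$ is not ``concavity of entropy'': it needs monotonicity of entanglement under Alice's local encoding, tracked use by use. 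This bookkeeping is the content of \cite[Section 4]{Shor} and cannot be waved through. There are also two gaps on the direct side. Entanglement dilution is not free: diluting singlets into the message-dependent product $\bigotimes_j\chi_{\rho_{i_j}}$ costs classical communication of order $\sqrt{k}$ bits (this is the point of \cite{LP}), which in this setting can only travel through $\Ne$ itself and must be shown to cost zero rate; Shor sidesteps this by using constant-composition codes, for which the shared resource $\bigotimes_i\chi_{\rho_i}^{\otimes \lceil\lambda_i k\rceil}$ is the same for every codeword and only Alice's local routing of her halves depends on the message. Finally, ``a random-coding argument combines the two layers'' hides a compatibility issue: Bob must decode the HSW layer from outputs already scrambled by the BSST encoding, which works only because the BSST encoding can be arranged so that the average input on the $i$-positions is still $\rho_i$; this needs to be stated and checked.
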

Indeed, although the aim of the work \cite{Shor} is to study the capacity $R^d(\Ne)$, where one has to remove restriction b) above, the main result presented in Shor's work is  the construction of a protocol between Alice and Bob with capacity equal to (\ref{d-restricted capacity Shor}) and verifying conditions a) and b) (see \cite[Theorem 1]{Shor}). Furthermore, Shor proved that the expression (\ref{d-restricted capacity Shor}) is an upper bound for the capacity $R^d(\Ne)$ if we add the restriction b) (see \cite[Section 4]{Shor}). In fact, one can follow the argument in \cite{Shor} verbatim to obtain the expression (\ref{d-restricted capacity}) for the capacity considered in this work, $C_{prod}^d(\Ne)$. 

Let us comment some differences between the capacities $R_{prod}^d(\Ne)$ and $C_{prod}^d(\Ne)$. First of all, the restriction a) in the previous theorem refers to the entropy of entanglement rather than to the dimension of the entanglement as in $C_{prod}^d(\Ne)$. Moreover, conditions a) and b) in $R_{prod}^d(\Ne)$ restrict to protocols in which, if Alice and Bob use $n$ times the channel, $\Ne^{\otimes _n}$, in their protocol, then they can use states of the form $\eta=\eta_1\otimes \cdots \otimes \eta_n$, where $\eta_i$ is a bipartite pure state for every $i$, so that $S_e(\eta)\leq nd$\footnote{$S_e(\rho):=S((id\otimes tr)(\rho))=S((tr\otimes id)(\rho))$ is the entropy of entanglement of the bipartite pure state $\rho$.}. This is slightly more general than imposing $S_e(\eta_i)\leq d$ for every $i$. This is reflected in the optimization condition $\sum_{i=1}^N\lambda_iS(\eta_i)\leq d$. However, in the definition of $C_{prod}^d(\Ne)$, we impose that $\eta_i\in S_1^d\otimes S_1^d$ for every $i$. This is the reason because condition a) is not completely analogous in the expression (\ref{d-restricted capacity}) and in Theorem \ref{interpretation Shor}. Moreover, in the definition of $R_{prod}^d(\Ne)$ the sender is allowed to input more than one state in the same channel at the price of using some of the channels without entanglement. This is not allowed in the definition of $C_{prod}^d(\Ne)$. This is the reason because condition b) is slightly different in the definition of $R_{prod}^d(\Ne)$ and $C_{prod}^d(\Ne)$. On the other hand, the fact that the expression in  (\ref{d-restricted capacity}) is not completely analogous to the one in (\ref{d-restricted capacity Shor}) is because the first one is expressed in the ``tensor product form''. However, one can easily show the following result.
\begin{theorem}\label{Shor's Expression}
Given a quantum channel $\Ne:S_1^n\rightarrow S_1^n$, we have
\begin{align}\label{shor's Expression formula}
C_{prod}^d(\Ne)=\tilde{C}_{prod}^d(\Ne):=\sup\Big\{S\Big(\sum_{i=1}^N \lambda_i\big(\Ne\circ \phi_i\big)(\delta_i)\Big)
\\\nonumber+\sum_{i=1}^N \lambda_i\Big [S(\delta_i)-S\Big(\big(id_d\otimes (\Ne\circ \phi_i)\big)(\chi_{\delta_i})\Big)\Big] \Big\}.
\end{align}Here, the supremum runs over all $N\in \N$, all probability distributions $(\lambda_i)_{i=1}^N$, all states $\delta_i\in S_1^d$ and all quantum channels $\phi_i:S_1^d\rightarrow S_1^n$ for every $i=1,\cdots, N$. $\chi_{\delta_i}$ denotes any purification of $\delta_i$ for every $i=1,\cdots, N$.
\end{theorem}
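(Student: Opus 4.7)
The plan is to prove the theorem by a term-by-term matching that reflects a natural correspondence between the data parametrizing the two suprema. The essential observation is that for a pure bipartite state $\eta\in S_1^d\otimes S_1^d$ the two marginals share the same nonzero Schmidt spectrum, hence $S\big((tr_d\otimes id_d)(\eta)\big)=S\big((id_d\otimes tr_d)(\eta)\big)$; conversely, any $\delta\in S_1^d$ has rank at most $d$ and therefore admits a purification that can be realized inside $S_1^d\otimes S_1^d$, which is the convention recorded in the footnote defining $\chi_\rho$.

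For the inequality $C_{prod}^d(\Ne)\leq \tilde{C}_{prod}^d(\Ne)$, I would start from an admissible tuple $\big(N,(\lambda_i),(\phi_i),(\eta_i)\big)$ in the definition \eqref{d-restricted capacity} of $C_{prod}^d(\Ne)$ and set
\begin{equation*}
\delta_i:=(tr_d\otimes id_d)(\eta_i)\in S_1^d,\qquad \chi_{\delta_i}:=\eta_i.
\end{equation*}
Then $(\Ne\circ\phi_i)\big((tr_d\otimes id_d)(\eta_i)\big)=(\Ne\circ\phi_i)(\delta_i)$ matches the first summand of $\tilde{C}_{prod}^d$; the Schmidt symmetry noted above matches the middle entropy term $S(\delta_i)=S\big((id_d\otimes tr_d)(\eta_i)\big)$; and the identity $\big(id_d\otimes(\Ne\circ\phi_i)\big)(\eta_i)=\big(id_d\otimes(\Ne\circ\phi_i)\big)(\chi_{\delta_i})$ matches the third. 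Hence every value of the functional defining $C_{prod}^d(\Ne)$ is realized by the functional defining $\tilde{C}_{prod}^d(\Ne)$, which gives the inequality.

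For the reverse inequality $\tilde{C}_{prod}^d(\Ne)\leq C_{prod}^d(\Ne)$ I would invert this correspondence. Given $\big(N,(\lambda_i),(\delta_i),(\phi_i),(\chi_{\delta_i})\big)$ admissible for $\tilde{C}_{prod}^d(\Ne)$, I may realize $\chi_{\delta_i}$ inside $S_1^d\otimes S_1^d$ with $(tr_d\otimes id_d)(\chi_{\delta_i})=\delta_i$ (always possible since $\mathrm{rank}(\delta_i)\leq d$), and set $\eta_i:=\chi_{\delta_i}$. The same three identifications run in reverse, producing an admissible tuple for $C_{prod}^d(\Ne)$ with the same value of the functional.

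The one point that requires care, and which I expect to be the only technical step, is the well-definedness of
\begin{equation*}
S\Big(\big(id_d\otimes(\Ne\circ\phi_i)\big)(\chi_{\delta_i})\Big),
\end{equation*}
since the purification $\chi_{\delta_i}$ is only unique up to an isometry on the purifying factor. This is handled by noting that any two purifications of $\delta_i$ inside $S_1^d\otimes S_1^d$ differ by conjugation by a unitary of the form $U\otimes I_d$ on the purifying factor; since such a conjugation commutes with $id_d\otimes(\Ne\circ\phi_i)$, the two output states are unitarily equivalent and hence have the same von Neumann entropy. With this observation both inclusions become tight and the claimed equality follows.
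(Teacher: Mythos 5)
Your proof is correct and follows essentially the same route as the paper's own argument: both inequalities are obtained by the term-by-term correspondence $\delta_i=(tr_d\otimes id_d)(\eta_i)$, $\chi_{\delta_i}=\eta_i$ in one direction and $\eta_i:=$ a purification of $\delta_i$ realized inside $S_1^d\otimes S_1^d$ in the other, combined with the equality of the two marginal entropies of a pure bipartite state. Your extra remark that the entropy term is independent of the choice of purification (via a unitary, or more generally an isometry, on the purifying factor) is a point the paper leaves implicit, but it does not alter the substance of the argument.
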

\begin{proof}
Let us start by showing $C_{prod}^d(\Ne)\leq \tilde{C}_{prod}^d(\Ne)$. For this, let us consider an ensemble $\big\{(\lambda_i)_{i=1}^N, (\eta_i)_{i=1}^N, (\phi_i)_{i=1}^N\big\}$  optimizing (\ref{d-restricted capacity}). Then, by defining $\delta_i=(tr_d\otimes id_d)(\eta_i)$ for every $i$, we can consider the ensemble $\big\{(\lambda_i)_{i=1}^N, (\delta_i)_{i=1}^N, (\phi_i)_{i=1}^N\big\}$.  By definition, $\eta_i$ is a purification of $\delta_i$, which implies, in particular,  $$S(\delta_i)=S\Big((tr_d\otimes id_d)(\eta_i)\Big)=S\Big((id_d\otimes tr_d)(\eta_i)\Big)$$for every $i$ (see for instance \cite[Section 2.5]{NC}). Then, the inequality follows by plugging the ensemble $\big\{(\lambda_i)_{i=1}^N, (\delta_i)_{i=1}^N, (\phi_i)_{i=1}^N\big\}$ in the expression of $\tilde{C}_{prod}^d(\Ne)$ above. For the converse inequality, $C_{prod}^d(\Ne)\geq \tilde{C}_{prod}^d(\Ne)$, given an ensemble $\big\{(\lambda_i)_{i=1}^N, (\delta_i)_{i=1}^N, (\phi_i)_{i=1}^N\big\}$ optimizing $\tilde{C}_{prod}^d(\Ne)$, we consider $\big\{(\lambda_i)_{i=1}^N, (\eta_i)_{i=1}^N, (\phi_i)_{i=1}^N\big\}$, where here $\eta_i\in S_1^n\otimes S_1^n$ is any purification of $\delta_i$ for every $i$. Again, the inequality follows by plugging the ensemble $\big\{(\lambda_i)_{i=1}^N, (\delta_i)_{i=1}^N, (\phi_i)_{i=1}^N\big\}$ in the expression (\ref{d-restricted capacity}) for $C_{prod}^d(\Ne)$.
\end{proof}
This theorem gives us an expression for $C_{prod}^d(\Ne)$ analogous to the expression in (\ref{Shor's Expression}). Since $S(\rho)\leq \ln d$ for every $d$-dimensional bipartite state, it is clear that $C_{prod}^d(\Ne)\leq R_{prod}^{\ln d}(\Ne)$ for every quantum channel $\Ne$. Furthermore, following the spirit of \cite{Shor} we may consider the same product state capacity as a function of the number $d$ of singlets\footnote{The singlet is the basic $2$-dimensional bipartite quantum state defined as the rank one projection onto the vector $\frac{1}{\sqrt{2}}(e_1\otimes e_1+e_2\otimes e_2)\in \ell_2^2\otimes \ell_2^2$.} per channel use, $E^d$, and we would trivially obtain $$E_{prod}^{\ln d}(\Ne)\leq C_{prod}^d(\Ne)\leq R_{prod}^{\ln d}(\Ne).$$In order to obtain the general capacities (rather than the product state version) one has to consider the regularization. It is not difficult to see that in this case the regularization is given by
\begin{align*}
C^d(\Ne)=\sup_k\frac{C_{prod}^{d^k}(\otimes^k \Ne)}{k}
\end{align*}and analogously for $R^d$ and $E^d$. Interestingly, using the explicit form of the protocol given by Shor in \cite{Shor} and the fact that entanglement is an interconvertible resource (see \cite{LP}), one can conclude (see \cite[Section 5]{Shor}) that $$E^{\ln d}(\Ne)=C^d(\Ne)=R^{\ln d}(\Ne).$$
That is, the three capacities $E_{prod}^{\ln d}$, $C_{prod}^d$ and $R_{prod}^{\ln d}$ represent different product state versions of the same capacity. Theorem \ref{counterexample additivity} tells us that we do need to consider their regularization since $C^d(\Ne)$ can be very different from $C_{prod}^d(\Ne)$. In fact, Theorem \ref{counterexample additivity} can be proved for $R_{prod}^{\ln d}(\Ne)$ and $E_{prod}^{\ln d}(\Ne)$ by using similar ideas.

We should remark here that the capacity $C^d$ has been also studied in some recent works (see \cite{HsWi}, \cite{WiHs} and the references therein). There, the authors study the communication rates of a quantum channel when combined with the noiseless resources of classical communication, quantum communication and entanglement. However, the approach in those works is different from the one followed in this paper and they do not consider the product state capacity $C_{prod}^d(\Ne)$.

Finally, note that the restriction in the entanglement dimension considered in this work, ``implies'' that in our formulae (\ref{d-restricted capacity}) and (\ref{Shor's Expression}) we need to optimize also over quantum channels $(\phi_i:S_1^d\rightarrow S_1^n)_i$. This is because we need to encode our initial states, in dimension $d$, into states of the same dimension of the channel input ($n$ in our case). We finish this section by proving that such a bother can be avoided.
\begin{prop}\label{Shor's Expression II}
Given a quantum channel $\Ne:S_1^n\rightarrow S_1^n$, we have
\begin{align}\label{shor's Expression formula II}
C_{prod}^d(\Ne)=\tilde{\tilde{C}}^d_{prod}(\Ne):=sup\Big\{S\Big(\sum_{i=1}^N \lambda_i \Ne((tr_d\otimes id_n)(\rho_i))\Big)
\\ \nonumber+\sum_{i=1}^N \lambda_i\Big[S\Big((id_d\otimes tr_n)(\rho_i)\Big)-S\Big(\big(id_d\otimes \Ne\big)(\rho_i)\Big)\Big] \Big\}.
\end{align}Here, the supremum runs over all $N\in \N$, all probability distributions $(\lambda_i)_{i=1}^N$ and all families $(\rho_i)_{i=1}^N$ of quantum states $\rho_i\in S_1^d\otimes S_1^n$ for every $i=1,\cdots, N$.
\end{prop}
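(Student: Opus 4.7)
The plan is to prove the two inequalities separately, leveraging Theorem~\ref{Shor's Expression} which already identifies $C^d_{prod}(\Ne)$ with $\tilde{C}^d_{prod}(\Ne)$. The game is to show that parametrising by $\{(\lambda_i),(\delta_i),(\phi_i)\}$ (reduced state plus encoding channel) is equivalent to parametrising by $\{(\lambda_i),(\rho_i)\}$ (a bipartite state already living in $S_1^d\otimes S_1^n$), and that the two objective functions match under this correspondence.

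For the inequality $\tilde{\tilde{C}}^d_{prod}(\Ne)\ge C^d_{prod}(\Ne)$, I would start with any ensemble $\{(\lambda_i),(\delta_i),(\phi_i)\}$ feasible for $\tilde{C}^d_{prod}(\Ne)$ and, for each $i$, pick the canonical Schmidt purification $\chi_{\delta_i}\in S_1^d\otimes S_1^d$ written in a basis that diagonalises $\delta_i$, so that both partial traces of $\chi_{\delta_i}$ return $\delta_i$. Setting $\rho_i:=(id_d\otimes\phi_i)(\chi_{\delta_i})\in S_1^d\otimes S_1^n$, the trace-preservation of $\phi_i$ immediately yields
\begin{align*}
(id_d\otimes tr_n)(\rho_i)=\delta_i,\qquad (tr_d\otimes id_n)(\rho_i)=\phi_i(\delta_i),\qquad (id_d\otimes\Ne)(\rho_i)=(id_d\otimes(\Ne\circ\phi_i))(\chi_{\delta_i}),
\end{align*}
so inserting $\{(\lambda_i),(\rho_i)\}$ into (\ref{shor's Expression formula II}) reproduces term by term the value of the $\tilde{C}^d_{prod}(\Ne)$-expression on the original ensemble.

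For the reverse inequality, given any ensemble $\{(\lambda_i),(\rho_i)\}$ with $\rho_i\in S_1^d\otimes S_1^n$, I set $\delta_i:=(id_d\otimes tr_n)(\rho_i)\in S_1^d$ and take $\chi_{\delta_i}\in S_1^d\otimes S_1^d$ to be its canonical Schmidt purification (which exists because $\delta_i$ has rank at most $d$). The main technical step is to produce a quantum channel $\phi_i:S_1^d\to S_1^n$ with $\rho_i=(id_d\otimes\phi_i)(\chi_{\delta_i})$. I would do this by taking any purification $|\psi_{\rho_i}\rangle\in\ell_2^d\otimes\ell_2^n\otimes\ell_2^R$ of $\rho_i$; since this vector also purifies $\delta_i$, the purification-freedom theorem provides an isometry $V:\ell_2^d\to\ell_2^n\otimes\ell_2^R$ with $|\psi_{\rho_i}\rangle=(I_d\otimes V)|\chi_{\delta_i}\rangle$, and then $\phi_i(\sigma):=tr_R(V\sigma V^*)$ is a CPTP map from $S_1^d$ to $S_1^n$ satisfying $(id_d\otimes\phi_i)(\chi_{\delta_i})=\rho_i$. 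The same three identities as in the first direction then identify the value of (\ref{shor's Expression formula II}) on $\{(\lambda_i),(\rho_i)\}$ with the value of the $\tilde{C}^d_{prod}(\Ne)$-expression on $\{(\lambda_i),(\delta_i),(\phi_i)\}$, which is at most $\tilde{C}^d_{prod}(\Ne)=C^d_{prod}(\Ne)$.

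The only real subtlety is ensuring that the purifying Hilbert space of $\delta_i$ can be taken of dimension exactly $d$, so that $\phi_i$ is a genuine channel $S_1^d\to S_1^n$; this is automatic because $\delta_i\in S_1^d$ has rank at most $d$. Everything else is standard bookkeeping with the purification formalism and the trace-preserving property of $\phi_i$.
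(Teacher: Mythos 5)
Your proof is correct, and its overall skeleton (two inequalities, passing back and forth between ensembles $\{(\lambda_i),(\delta_i),(\phi_i)\}$ and $\{(\lambda_i),(\rho_i)\}$ via the three partial-trace identities) is the same as the paper's. The genuine difference lies in how you produce the channel $\phi_i:S_1^d\to S_1^n$ with $(id_d\otimes\phi_i)(\chi_{\delta_i})=\rho_i$, which is the only nontrivial step. The paper isolates this as Lemma \ref{xieta} and proves it by an explicit, self-contained construction: writing $\rho_i=\sum_{j,k}\xi_{j,k}\otimes e_{j,k}$ in the Schmidt eigenbasis of $\delta_i$, defining $\Phi(e_{j,k})=\tfrac{1}{\sqrt{\lambda_j\lambda_k}}\xi_{j,k}$, and verifying complete positivity and unitality of $\Phi^*$ by hand through the identity $\Phi^*(b)=(\lambda^{-1/2})^*T_{\xi}(b)\lambda^{-1/2}$ (plus an auxiliary map $\Psi$ to reduce an arbitrary purification to the Schmidt one). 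You instead invoke the purification-freedom theorem: any purification $|\psi_{\rho_i}\rangle\in\ell_2^d\otimes\ell_2^n\otimes\ell_2^R$ of $\rho_i$ also purifies $\delta_i$, hence equals $(I_d\otimes V)|\chi_{\delta_i}\rangle$ for an isometry $V:\ell_2^d\to\ell_2^n\otimes\ell_2^R$ (the needed dimension inequality $d\le nR$ holds since $d\le n$), and $\phi_i(\sigma):=tr_R(V\sigma V^*)$ is then CPTP and does the job. Both routes rest ultimately on the Schmidt decomposition, but yours is shorter and outsources the work to a standard quantum-information fact, while the paper's is self-contained and yields the channel in explicit matrix form; your detour through Theorem \ref{Shor's Expression} in the first inequality (rather than plugging directly into the defining expression (\ref{d-restricted capacity}), as the paper does) is harmless, since the pure states $\eta_i$ there are precisely purifications of their reduced states. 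Your closing remark about the purifying space having dimension exactly $d$ correctly identifies the one point where the rank bound $\mathrm{rank}(\delta_i)\le d$ is needed.
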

The proof of Proposition \ref{Shor's Expression II} is based on the following lemma.
\begin{lemma}\label{xieta}
Let $\xi\in S_1^m \otimes S_1^d$ be a state and let $\gamma\in S_1^k \otimes S_1^d$ be a purification of $\eta=(tr_m\ten id_d)(\xi)$. Then, there is a completely positive and trace preserving map $\Phi:S_1^k\to S_1^m$ such that
 \[ (\Phi\ten id_d)(\gamma) \lel \xi \pl .\]
\end{lemma}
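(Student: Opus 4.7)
\textbf{Proof plan for Lemma \ref{xieta}.}

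The approach is the classical ``uniqueness of purification'' argument, reinterpreted so that the resulting isometry becomes the Stinespring dilation of the desired channel $\Phi$. The key observation is that any purification of $\eta=(tr_m\otimes id_d)(\xi)$ can be implemented by first purifying $\xi$ and then regarding the $m$-system together with the new environment as the purifying space of $\eta$.

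First I would take a Stinespring-type purification of $\xi$ itself: choose an integer $K$ (large enough so that $mK\geq k$, which we can always arrange) and a unit vector $|\Psi\rangle\in \mathbb{C}^m\otimes\mathbb{C}^K\otimes\mathbb{C}^d$ with
\[
(id_m\otimes tr_K\otimes id_d)\bigl(|\Psi\rangle\langle\Psi|\bigr)=\xi.
\]
Tracing out the $m$-factor in addition gives
\[
(tr_m\otimes tr_K\otimes id_d)\bigl(|\Psi\rangle\langle\Psi|\bigr)=(tr_m\otimes id_d)(\xi)=\eta,
\]
so $|\Psi\rangle$ is a purification of $\eta$ with purifying system $\mathbb{C}^m\otimes\mathbb{C}^K$.

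By hypothesis $\gamma=|\gamma\rangle\langle\gamma|$ with $|\gamma\rangle\in\mathbb{C}^k\otimes\mathbb{C}^d$ is another purification of $\eta$. Next I would invoke the standard uniqueness-of-purification theorem (proved via Schmidt decomposition; see e.g.\ \cite[Section 2.5]{NC}) to produce an isometry $V:\mathbb{C}^k\to\mathbb{C}^m\otimes\mathbb{C}^K$ such that
\[
(V\otimes id_d)|\gamma\rangle=|\Psi\rangle.
\]
Here the choice $mK\geq k$ guarantees that a genuine isometry (and not merely a partial isometry) exists, which is what we need for trace preservation.

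Finally, define $\Phi:S_1^k\to S_1^m$ by $\Phi(\rho)=(id_m\otimes tr_K)(V\rho V^*)$. Since $V$ is an isometry, $V^*V=\uno_k$, so $\Phi$ is completely positive and trace preserving. Moreover,
\[
(\Phi\otimes id_d)(\gamma)=(id_m\otimes tr_K\otimes id_d)\bigl((V\otimes id_d)|\gamma\rangle\langle\gamma|(V^*\otimes id_d)\bigr)=(id_m\otimes tr_K\otimes id_d)(|\Psi\rangle\langle\Psi|)=\xi,
\]
as required. The only point requiring any care is the bookkeeping for the dimension of the auxiliary space $\mathbb{C}^K$ and the ordering of tensor factors when invoking purification uniqueness; there is no real analytic obstacle since everything takes place in finite dimensions.
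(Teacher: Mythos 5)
Your proof is correct, but it follows a genuinely different route from the one in the paper. You run the standard purification/Stinespring argument: purify $\xi$ itself to a vector $|\Psi\rangle$, observe that $|\Psi\rangle$ is simultaneously a purification of $\eta$, invoke uniqueness of purifications to obtain an isometry $V:\ell_2^k\rightarrow \ell_2^m\otimes\ell_2^K$ intertwining $|\gamma\rangle$ and $|\Psi\rangle$, and define $\Phi$ as conjugation by $V$ followed by the partial trace over the environment, so that complete positivity and trace preservation come for free from $V^*V=\uno$, and arbitrary purifications $\gamma$ are treated uniformly. The paper instead argues in coordinates: it Schmidt-decomposes $\eta=\sum_k\lambda_k e_{k,k}$, writes $\xi=\sum_{i,j}\xi_{i,j}\otimes e_{i,j}$, first handles the minimal purification $\sum_{i,j}\sqrt{\lambda_i\lambda_j}\,e_{i,j}\otimes e_{i,j}$ by defining $\Phi$ explicitly on matrix units, $\Phi(e_{i,j})=\xi_{i,j}/\sqrt{\lambda_i\lambda_j}$, deduces complete positivity of $\Phi^*$ from complete positivity of the Choi-type map $T_\xi$ (which holds because $\xi\geq 0$) via the identity $\Phi^*(b)=(\lambda^{-1/2})^*T_\xi(b)\lambda^{-1/2}$, checks unitality by hand, and then reduces a general purification $\gamma_h$ to the minimal one through an auxiliary channel $\Psi$ built from an isometry plus a correction term. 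Your route is shorter and conceptually cleaner: nothing needs verification beyond the intertwining identity, and no case distinction between minimal and general purifications arises. The paper's route is more explicit and self-contained: it produces concrete formulas for $\Phi$ and $\Phi^*$ in terms of the blocks of $\xi$, phrased in the tensor-map language ($T_\xi$, $M_a$, and so on) used throughout the rest of the paper, rather than invoking the uniqueness-of-purification theorem as a black box --- though of course both arguments ultimately rest on the Schmidt decomposition. The one point in your write-up worth stating explicitly is that uniqueness of purification a priori only matches the Schmidt vectors of $|\gamma\rangle$ with those of $|\Psi\rangle$; extending the resulting partial isometry to a genuine isometry on all of $\ell_2^k$ is precisely where your condition $mK\geq k$ is needed, as you correctly note.
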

\begin{proof}
Let $\xi\in S_1^m \otimes S_1^d$ and  $\eta=(tr_m\ten id_d)(\xi)$ be as in the statement of the lemma. Then, we can invoke the Schmidt decomposition to find a probability distribution $(\lambda_k)_{k=1}^d$ and an orthonormal basis $(e_k)_{k=1}^d$ of $\ell_2^d$ such that 
$\eta=\sum_{k=1}^d\lambda_ke_{k,k}$\footnote{We can assume that $\eta$ has rank $d$. Otherwise, we can realize $\xi$ as an element of $S_1^m \otimes S_1^k$ with $k=rank(\eta)$ and exactly the same proof works. $e_{k,l}$ must be understood as the corresponding matrix written in the basis $(e_k)_{k=1}^d$.}. Let us write $\xi=\sum_{i,j=1}^d\xi_{i,j}\otimes e_{i,j}$ and note that
\begin{align}\label{obvious schimdt decomp}
\eta=\sum_{k=1}^d\lambda_ke_{k,k}=\sum_{i,j=1}^dtr_m(\xi_{i,j})e_{i,j}.
\end{align}The smallest purification of $\eta$ is given by $$\gamma=\sum_{i,j=1}^d\sqrt{\lambda_i\lambda_j}e_{i,j}\otimes e_{i,j}.$$Then, we define the linear map $\Phi:S_1^d\to S_1^m$ given by $$\Phi(e_{i,j})=\frac{1}{\sqrt{\lambda_i\lambda_j}}\xi_{i,j}.$$With this definition we clearly have
$$(\Phi\ten id_d)(\gamma) =\sum_{i,j=1}^d\xi_{i,j}\otimes e_{i,j}=\xi.$$
Therefore, we need to show that $\Phi$ is completely positive and trace preserving. We will show, equivalently, that $\Phi^*:M_m\rightarrow M_d$ is completely positive and unital. To this end, we claim that
\begin{align}\label{claim lemma cp}
\Phi^*(b)=(\lambda^{-1/2})^*T_\xi (b)\lambda^{-1/2}
\end{align}for every $b\in M_m$, where here $T_\xi :M_m\rightarrow M_d$ denotes the linear map associated to the tensor $\xi$ and  $\eta^{-1/2}$ denotes the row matrix $(\lambda_1^{-1/2},\cdots, \lambda_d^{-1/2})$. As we have mentioned before, the positivity of $\xi$ is equivalent to the completely positivity of  the map $T_\xi$. Then, one can immediately conclude from (\ref{claim lemma cp}) that  $\Phi^*$ is completely positive. On the other hand,
\begin{align*}
\Phi^*(\uno_m)=(\lambda^{-1/2})^*T_\xi (\uno_m)\lambda^{-1/2}&=(\lambda^{-1/2})^*(tr_m\otimes id_d)(\xi(\uno_m\otimes \uno_d))\lambda^{-1/2}\\&=(\lambda^{-1/2})^*(tr_m\otimes id_d)(\xi)\lambda^{-1/2}=\uno_d,
\end{align*}where in the last equality we have used (\ref{obvious schimdt decomp}). Thus, $\Phi^*$ is completely positive and unital, as we wanted.

It remains to prove our claim (\ref{claim lemma cp}). To this end, we fist note that
$$\Phi^*(b)=\sum_{i,j=1}^d\frac{1}{\sqrt{\lambda_i\lambda_j}}tr(\xi_{i,j}b^{tr})e_{i,j}=(\lambda^{-1/2})^*\Big(\sum_{i,j=1}^dtr(\xi_{i,j}b^{tr})e_{i,j}\Big)\lambda^{-1/2}.$$Indeed, $$\langle\Phi^*(b), e_{k,l}\rangle=\langle b , \Phi(e_{k,l})\rangle=\frac{1}{\sqrt{\lambda_k\lambda_l}}tr(b \xi_{k,l}^{tr})=\frac{1}{\sqrt{\lambda_k\lambda_l}}tr(b^{tr} \xi_{k,l}).$$
On the other hand,
$$T_\xi (b)=(tr_m\otimes id_d)\big(\xi(b^{tr}\otimes \uno_d)\big)=\sum_{i,j=1}^dtr_m(\xi_{i,j}b^{tr})e_{i,j}.$$So we obtain (\ref{claim lemma cp}).

Finally, let us assume that $\gamma_h\in S_1(H)\otimes S_1^d$ is any other purification of $\eta$, where $h\in H\otimes \ell_2^d$. We can write $h=\sum_{k=1}^dh_k\otimes e_k$. Thus,$$\big(tr_{S_1(H)}\otimes id_d\big)(\gamma_h)=\sum_{i,j=1}^d\langle h_i, h_j\rangle e_{i,j}=\sum_{k=1}^d\lambda_k e_{k,k}.$$This trivially implies that $\langle h_i, h_j\rangle=\lambda_i\delta_{i,i}$ for every $i,j$. Hence, we can define the linear isometry $v:\ell_2^d\rightarrow H$ given by $v(e_k)=\tilde{h}_k:=\frac{1}{\sqrt{\lambda_k}}h_k$ for every $k$. Moreover, we will consider the orthogonal projection $p:H\rightarrow v(\ell_2^d)$. Then, we define $\Psi:S_1(H)\rightarrow S_1^d$, given $$\Psi(a)=v^*av+tr_{S_1(H)}\big((1-p)a(1-p)\big)e_{1,1}$$for every $a$. Then, it is very easy to see that $ \Psi$ is completely positive and trace preserving and, moreover, $$(\Psi\otimes id_d)(\gamma_h)=\sum_{i,j=1}^d\sqrt{\lambda_i\lambda_j}e_{i,j}\otimes e_{i,j}.$$ Therefore, $\Phi \Psi$ does the job.
\end{proof}
\begin{proof}[Proof of Proposition \ref{Shor's Expression II}]

Inequality $C_{prod}^d(\Ne)\leq \tilde{\tilde{C}}^d_{prod}(\Ne)$ can be proved very easily. Indeed, given an optimal ensemble for ${C}^d(\Ne)$, $\big\{(\lambda_i)_{i=1}^N, (\eta_i)_{i=1}^N, (\phi_i)_{i=1}^N\big\}$, we just need to consider the ensemble $\big\{(\lambda_i)_{i=1}^N, (\rho_i)_{i=1}^N, \big\}$, where  $\rho_i=(id_d\otimes \phi_i)(\eta_i)$ for every $i$ for the optimization in (\ref{shor's Expression formula II}). In order to prove inequality $C_{prod}^d(\Ne)\geq \tilde{\tilde{C}}^d_{prod}(\Ne)$, we consider again an optimal ensemble $\big\{(\lambda_i)_{i=1}^N, (\rho_i)_{i=1}^N, \big\}$ for $\tilde{\tilde{C}}^d_{prod}(\Ne)$. Then, we define $\delta_i=(id_d\otimes tr_n)(\rho_i)\in S_1^d$ and $\eta_i=\chi_{\delta_i}\in S_1^d\otimes S_1^d$ for every $i$. According to Lemma \ref{xieta}, we can find quantum channels $\phi_i:S_1^d\rightarrow S_1^n$ so that $(id_d\otimes \phi_i)(\eta_i)=\rho_i$ for every $i$. Then, we obtain the desired inequality by noting 
\begin{align*}
S\Big(\sum_{i=1}^N \lambda_i \Ne\big((tr_d\otimes id_n)(\rho_i)\big)\Big)&=S\Big(\sum_{i=1}^N \lambda_i \Ne\big((tr_d\otimes id_n)\big((id_d\otimes \phi_i)(\eta_i)\big)\big)\Big)\\&=S\Big(\sum_{i=1}^N \lambda_i (\Ne\circ \phi_i)\big((tr_d\otimes id_d)(\eta_i)\big)\Big),
\end{align*}
\begin{align*}
\sum_{i=1}^N \lambda_iS\Big((id_d\otimes tr_n)(\rho_i)\Big)&=\sum_{i=1}^N \lambda_iS\Big((id_d\otimes tr_n)\big((id_d\otimes \phi_i)(\eta_i)\big)\Big)\\&=\sum_{i=1}^N \lambda_iS\Big((id_d\otimes tr_d)(\eta_i)\Big),
\end{align*}
\begin{align*}
\sum_{i=1}^N \lambda_iS\Big(\big(id_d\otimes \Ne\big)(\rho_i)\Big)&=\sum_{i=1}^N \lambda_iS\Big(\big(id_d\otimes \Ne\big)\big((id_d\otimes \phi_i)(\eta_i)\big)\Big)\\&=\sum_{i=1}^N \lambda_iS\Big(\big(id_d\otimes( \Ne\circ \phi_i)\big)(\eta_i)\Big).
\end{align*}\end{proof}
\vskip 2cm
\hfill \noindent \textbf{Marius Junge} \\
\null \hfill Department of Mathematics \\ \null \hfill
University of Illinois at Urbana-Champaign\\ \null \hfill 1409 W. Green St. Urbana, IL 61891. USA
\\ \null \hfill\texttt{junge@math.uiuc.edu}

\vskip 1cm

\hfill \noindent \textbf{Carlos Palazuelos} \\
\null \hfill Instituto de Ciencias Matem\'aticas\\ \null \hfill
CSIC-UAM-UC3M-UCM \\ \null \hfill Consejo Superior de
Investigaciones Cient{\'\i}ficas \\ \null \hfill C/ Nicol\'as Cabrera 13-15.
28049, Madrid. Spain \\ \null \hfill\texttt{carlospalazuelos@icmat.es}

\begin{thebibliography}{99}

\bibitem{AHW} G. G. Amosov, A. S. Holevo, R. F. Werner, emph{On Some Additivity Prob- lems in Quantum Information Theory}, Problems in Information Transmission, 36, 305-313 (2000).

\bibitem{ASW}  G. Aubrun, S. Szarek, E. Werner, \emph{Hastings' additivity counterexample via Dvoretzky's theorem}, Comm. Math. Physics 305, 85-97 (2011).

\bibitem{Aud1} K. M. R. Audenaert, \emph{ A Sharp Fannes-type Inequality for the von Neumann Entropy}, J. Phys. A 40, 8127--8136 (2007).

\bibitem{Aud2} K. M. R. Audenaert, \emph{A note on the $p\rightarrow q$ norms of $2$-positive maps}, Linear Algebra Appl. 430 , no. 4,
1436-1440 (2009).

\bibitem{BDHSW} C. H. Bennett, I. Devetak, A. W. Harrow, P. W. Shor, A. Winter, \emph{The quantum reverse Shannon theorem}  (2009).
arXiv:0912.5537v1.

\bibitem{BSST} C. H. Bennet, P. W. Shor, J. A. Smolin and A. Thapliyan, \emph{Entanglement-assietd capacity of a quantum channel and the reverse Shannon Theorem}, IEEE Transaction of information theorey, Vol 48, 10 , 2637-2655 (2002).

\bibitem{BrHo} F. Brandao, M. Horodecki, \emph{On Hastings' counterexamples to the minimum output entropy additivity conjecture}, Open Syst. Inf. Dyn. 17, 31 (2010).

\bibitem{DeFl} A. Defant and K. Floret, \emph{Tensor Norms and Operator Ideals}, North-Holland, (1993).


\bibitem{DJKR} I. Devetak, M. Junge, C. King, M. B. Ruskai, \emph{Multiplicativity of completely bounded p-norms implies a new additivity
result},  Comm. Math. Phys. 266, no. 1, 37-63 (2006).

\bibitem{DJT} J. Diestel, H. Jarchow and A. Tonge, \emph{Absolutely Summing Operators}, Cambridge University Press, Cambridge (1995).

\bibitem{EffrosRuan} E. G. Effros and Z.-J. Ruan, {\it Operator spaces}, London Math. Soc. Monographs New Series, Clarendon Press,
Oxford, (2000).

\bibitem{FKM} M. Fukuda, C. King, D. Moser, \emph{Comments on Hastings' Additivity Counterexamples}, arXiv:0905.3697.

\bibitem{Grothendieck} A. Grothendieck, \emph{R\'esum\'e de la th\'eorie m\'etrique des produits tensoriels topologiques} (French), Bol. Soc. Mat. So Paulo, 8, 1-79, (1953).

\bibitem{Has} M. B. Hastings, \emph{A Counterexample to Additivity of Minimum Output Entropy}, Nature Physics 5, 255 (2009).

\bibitem{Hol} A. S. Holevo, \emph{The capacity of the quantum channel with general signal states}, IEEE Trans. Inf. Theory, 44, no. 1, 269-273, 1998.

\bibitem{HoJo} R. A. Horn, C. R. Johnson, \emph{Topics in Matrix Analysis} (Cambridge University press, 1991).

\bibitem{HsWi} M.-H. Hsieh,  M. Wilde, \emph{Entanglement-assisted communication of classical and quantum information}, IEEE Transactions on Information Theory, 56(9): 4682-4704 (2010).

\bibitem{Junge-Hab} M. Junge, \emph{Factorization theory for Spaces of Operators}, Habilitationsschrift Kiel, (1996). 
See also: Preprint server of the university of southern Denmark 1999, IMADA preprint: PP-1999-02.

\bibitem{JuPa} M. Junge, C. Palazuelos, \emph{CB-norm estimates for maps between noncommutative $L_p$-spaces and quantum channel theory}. Preprint.

\bibitem{JuRu} M. Junge, Z.-J. Ruan, \emph{Decomposable Maps on Non-commutative $L_p$ spaces}, Contemporary Mathematics,  365,  355-381 (2004).

\bibitem{King}  C. King, \emph{The capacity of the quantum depolarizing channel}, IEEE Trans. Inf. Theo. 49, 221 (2003); quant-ph/0204172.

\bibitem{KiKo} C. King and N. Koldan, \emph{Comparison of matrix norms on bipartite spaces}, J. Operator Theory,
67:2, 561-580 (2012).

\bibitem{LiPe} J. Lindenstrauss and A. Pelczynski , \emph{Absolutely summing operators in $\mathcal L_p$ spaces and their applications}, Studia Math. 29, 275-326 (1968).

\bibitem{LP} H.-K. Lo and S. Popescu, \emph{The classical communication cost of entanglement manipulation:
Is entanglement an inter-convertible resource?}, Phys. Rev. Lett. 83, pp. 1459-1462 (1999).

\bibitem{NC} M. A. Nielesen, I. L. Chuang, \emph{Quantum Computation and Quantum Information}, Cambridge University Press, New York, (2000).

\bibitem{Paulsen} V. Paulsen, \emph{Completely bounded maps and operator algebras}, Cambridge Studies in Advanced
Mathematics, vol. 78, Cambridge University Press, Cambridge, (2002).

\bibitem{Pie} A. Pietsch, \emph{Absolut $p$-summierende Abbildungen in normierten R\"aumen}, Studia Math. 28, 333-353 (1967).


\bibitem{Pisierbook} G. Pisier, \emph{An Introduction to Operator Spaces}, London Math. Soc. Lecture Notes Series 294, Cambridge University
Press, Cambridge (2003).

\bibitem{Pisierbook2} G. Pisier, \emph{Non-Commutative Vector Valued $L_p$-Spaces and Completely $p$-Summing Maps}, Asterisque, 247 (1998).

\bibitem{PiXu}  G. Pisier, Q. Xu, \emph{Non-commutative $L_p$-spaces. In Handbook of the geometry of Banach spaces}, Vol. 2, 1459-1517, North-Holland, Amsterdam (2003).

\bibitem{SW} B. Schumacher and M. D. Westmoreland, \emph{Sending classical information via noisy quantum channels}, Phys. Rev A, Vol. 56, no. 1, 131-138 (1997).

\bibitem{Shannon} C.E. Shannon, \emph{A Mathematical Theory of Communication}, Bell System Technical Journal, 27, 379-423 (1948).

\bibitem{Shor} P. W. Shor, \emph{Quantum Information, Statistics, Probability (Dedicated to A. S. Holevo on the occasion of his $60$th
birthday)}, chapter ``The classical capacity achievable by a quantum channel assisted by limited entanglement'', pages 144-152. Rinton Press, Inc., 2004. arXiv:quant-ph/0402129.

\bibitem{Takesaki} M. Takesaki, \emph{Theory of Operator Algebras I}, Springer-Verlag New York (1979).

\bibitem{War} J. Watrous, \emph{Notes on super-operator norms induced by Schatten norms},  Inf. Comput. 5 57-67 (2005).

\bibitem{WiHs} M. Wilde, M.-H. Hsieh, \emph{The quantum dynamic capacity formula of a quantum channel},  Quantum Information Processing, vol. 11, no. 6, 1431-1463 (2012).

\end{thebibliography}
\end{document}